\documentclass[reqno,11pt]{amsart}
\usepackage{citesort}
\usepackage{enumerate}

\hfuzz     = 6 pt    

\newtheorem{theorem}{Theorem}[section]
\newtheorem{proposition}[theorem]{Proposition}
\newtheorem{lemma}[theorem]{Lemma}

\theoremstyle{definition}

\theoremstyle{remark}
\newtheorem{remark}[theorem]{Remark}

\numberwithin{equation}{section}

\def\R{{\mathbb{R}}}

\def\ff{{II}}
\def\epsilon{{\varepsilon}}
\def\phi{{\varphi}}
\def\theta{{\vartheta}}
\def\dt{{\frac{d}{dt}}}

\DeclareMathOperator{\graph}{graph}

\def\ol#1{{\overline{#1}}}
\def\ul#1{{\underline{#1}}}

\begin{document}
\title{Entire scalar curvature flow and hypersurfaces of constant scalar curvature in Minkowski space}

\author{Pierre Bayard}
\thanks{The author was supported by the project DGAPA-UNAM IN 101507.\\ \indent \textbf{MSC
2000}: 35J60, 35K55, 53C50}
\address{Pierre Bayard: Instituto de F\'{\i}sica y Matem\'aticas. U.M.S.N.H. Ciudad Universitaria. CP. 58040 Morelia, Michoac\'an, Mexico.}
\email{bayard@ifm.umich.mx}

\begin{abstract}
We prove existence in the Minkowski space of entire spacelike hypersurfaces with constant negative scalar curvature and given set of lightlike directions at infinity;  we also construct the entire scalar curvature flow with prescribed set of lightlike directions at infinity, and prove that the flow converges to a spacelike hypersurface with constant scalar curvature. The proofs rely on barriers construction and a priori estimates.
\end{abstract}
\maketitle
\markboth{PIERRE BAYARD}{ENTIRE SCALAR CURVATURE FLOW IN MINKOWSKI SPACE}

\section{Introduction}

The Minkowski space $\R^{n,1}$ is the space $\R^{n}\times\R$ endowed with the metric $dx_1^2+\cdots+dx_n^2-dx_{n+1}^2.$ We say that a hypersurface of $\R^{n,1}$ is spacelike if the metric induced on it by the Minkowski metric is Riemannian, and that a function $u:\R^n\rightarrow\R$ of class $C^1$ is spacelike if its graph is a spacelike hypersurface, which equivalently means that $|Du|<1$ on $\R^n.$ The principal curvatures of a spacelike hypersurface are the eigenvalues of its curvature endomorphism $dN,$ where $N$ is the future oriented unit normal field.  In the natural chart $(x_1,\ldots,x_n),$ the curvature endomorphism $\left(h^i_j\right)_{ij}$ of the graph of a spacelike function $u$ is given by
$$h^i_j=\frac{1}{\sqrt{1-|Du|^2}}\sum_{k=1}^n\left(\delta_{ik}+\frac{u_iu_k}{1-|Du|^2}\right)u_{kj}.$$
Let us denote by $H_k[u]$ the $k^{th}$ elementary symmetric function of  the principal curvatures of the graph of $u.$

We are interested in the scalar curvature $S[u]$ of the graph of $u,$ which is linked to $H_2[u]$ by
$$S[u]=-2H_2[u].$$
We say that $u:\R^n\rightarrow\R$ of class $C^2$ is admissible, if $u$ is spacelike and if $H_1[u]>0$ and $H_2[u]>0$ on $\R^n.$ It is well known that the operator $H_2$ is elliptic on admissible functions, and that the Mac-Laurin inequality holds: on $\R^n,$
\begin{equation}\label{mac laurin}
H_2[u]^{\frac{1}{2}}\leq\sqrt{\frac{n-1}{2n}}H_1[u].
\end{equation}
Let $F$ be a closed subset of the unit sphere $S^{n-1}\subset\R^n.$ We suppose that $F$ is a union of arcs of circles on $S^{n-1}.$ We first construct barriers whose set of lightlike directions at infinity is the set $F.$ For definitions and examples, we refer to Sections \ref{section lightlike directions} and \ref{section barriers}.
\begin{proposition}\label{proposition intro barriers}
Let $F$ be as above, and consider $V_F:\R^n\rightarrow\R$ defined by  $V_F(x):=\sup_{\lambda\in F}\langle x,\lambda\rangle,$ where $\langle.,.\rangle$ stands for the canonical scalar product on $\R^n.$ Let $h$ and $k$ be two positive constants such that
\begin{equation}\label{condition h k}
h<\sqrt{\frac{2n}{n-1}}\mbox{ and }k\geq 1.
\end{equation} 
There exist two entire functions
$\ul u,\ol u:\R^n\rightarrow\R,$ such that
\begin{equation}\label{inequality barriers}
V_F<\ul u<\ol u<V_F+c\mbox{ on }\R^n
\end{equation}
for some constant $c,$ where $\ol u$ is smooth, spacelike, with constant mean curvature $H_1=h,$ and $\ul u$ is the supremum of spacelike functions with constant scalar curvature $H_2=k.$ Moreover, for all $\xi\in F,$
\begin{equation}\label{limsup barrier intro}
\lim_{r\rightarrow+\infty}\ \ul u(r\xi)-r=\lim_{r\rightarrow+\infty}\ \ol u(r\xi)-r= 0,
\end{equation}
and, from (\ref{inequality barriers}), for all $\xi\in S^{n-1}\backslash F,$
\begin{equation}\label{limsup barrier intro2}
\lim_{r\rightarrow+\infty}\ \ol u(r\xi)-r= -\infty.
\end{equation}
\end{proposition}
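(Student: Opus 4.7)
The plan is to build $\ol u$ and $\ul u$ by exploiting translates of the standard rotationally symmetric hyperboloid along null directions, which preserve principal curvatures since Minkowski translations are isometries. Setting $R_h:=n/h$ and $R_k:=\sqrt{n(n-1)/(2k)}$, the standard hyperboloids of radii $R_h$ and $R_k$ have $H_1\equiv h$ and $H_2\equiv k$ respectively, and condition (\ref{condition h k}) gives $R_h>R_k$. For each $\lambda\in F$ and $t>0$, the translates
\[
\phi_{\lambda,t}(x):=\sqrt{R_h^2+|x+t\lambda|^2}-t,\qquad \psi_{\lambda,t}(x):=\sqrt{R_k^2+|x+t\lambda|^2}-t,
\]
obtained by translating along the null vector $-t(\lambda,1)$, still satisfy $H_1[\phi_{\lambda,t}]\equiv h$ and $H_2[\psi_{\lambda,t}]\equiv k$. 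A direct algebraic check (writing $\sqrt{R^2+|x+t\lambda|^2}-(t+\langle x,\lambda\rangle)=(R^2+|x|^2-\langle x,\lambda\rangle^2)/(\sqrt{R^2+|x+t\lambda|^2}+(t+\langle x,\lambda\rangle))$) shows $\phi_{\lambda,t}(x)>\langle x,\lambda\rangle$ and $\psi_{\lambda,t}(x)>\langle x,\lambda\rangle$, and that both families converge monotonically to $\langle x,\lambda\rangle$ as $t\to\infty$; in particular $\phi_{\lambda,t}(r\lambda)-r\to 0$ and $\psi_{\lambda,t}(r\lambda)-r\to 0$ as $r\to\infty$ for each fixed $t$.

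For $\ol u$ I would invoke the solvability theory for the Dirichlet problem at infinity for spacelike constant-mean-curvature graphs in Minkowski space, which for any $1$-homogeneous convex admissible profile (here $V_F$) produces a smooth entire spacelike graph with $H_1\equiv h$ whose asymptotic profile is exactly $V_F$. Concretely, one solves $H_1[u_R]=h$ on Euclidean balls $B_R$ with Lipschitz boundary data agreeing with $V_F$ on $\partial B_R$, derives the uniform bound $V_F\leq u_R\leq V_F+c$ by combining the translated hyperboloids $\phi_{\lambda,t}$ ($\lambda\in F$, $t$ large) with supersolutions built as suitable inf-convolutions of such hyperboloidal translates, and passes to the limit $R\to\infty$ via interior $C^{2,\alpha}$ estimates. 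The strong maximum principle gives the strict lower bound $V_F<\ol u$, and the asymptotic limit $\ol u(r\xi)-r\to 0$ along $\xi\in F$ follows from comparison with $\phi_{\xi,t}(r\xi)\to r$.

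The lower barrier is then defined by the Perron-type envelope
\[
\ul u(x):=\sup\{v(x):v\in C^2(\R^n),\ v\text{ spacelike},\ H_2[v]\equiv k,\ v\leq \ol u\text{ on }\R^n\}.
\]
By Mac-Laurin (\ref{mac laurin}) one has $H_2[\ol u]\leq\frac{n-1}{2n}h^2<1\leq k$, so $\ol u$ is a strict supersolution of $H_2=k$ and the strong maximum principle forces $\ul u<\ol u$; the class is nonempty because for each $\lambda\in F$ the hyperboloid $\psi_{\lambda,t}$ lies below $\ol u$ for $t$ large (using $R_h>R_k$ together with the convergence $\psi_{\lambda,t}\to\langle\cdot,\lambda\rangle$), which at the same time yields $\ul u>V_F$. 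The asymptotic relation (\ref{limsup barrier intro}) follows for $\ul u$ from the squeeze $V_F<\ul u<\ol u$ together with $\ol u(r\xi)-r\to 0$, while (\ref{limsup barrier intro2}) is immediate from $\ol u<V_F+c$ and $V_F(\xi)<1$ for $\xi\in S^{n-1}\setminus F$. The main obstacle is the second step: constructing $\ol u$ as a smooth entire CMC graph with precisely the asymptotic trace $V_F$ demands a sharp existence-and-uniqueness result for the Dirichlet problem at infinity, combined with hyperboloidal supersolutions adapted to the full set $F$, since no single hyperboloid $\phi_{\lambda,t}$ dominates $V_F$ globally once $F$ contains multiple directions.
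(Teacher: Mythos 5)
Your construction of the lower barrier does not work, and the failure is at the step you flag as unproblematic: the claim that for each $\lambda\in F$ the translated hyperboloid $\psi_{\lambda,t}$ lies below $\ol u$ for $t$ large. For any \emph{finite} $t$ the function $\psi_{\lambda,t}(x)=\sqrt{R_k^2+|x+t\lambda|^2}-t$ grows like $|x|$ at infinity in \emph{every} direction (its blow-down is $V_{S^{n-1}}(x)=|x|$, not $\langle x,\lambda\rangle$; the convergence $\psi_{\lambda,t}\to\langle\cdot,\lambda\rangle$ as $t\to\infty$ is only pointwise, and your own identity shows $\psi_{\lambda,t}(x)-\langle x,\lambda\rangle$ behaves like the norm of the component of $x$ orthogonal to $\lambda$, divided by something of order $t$, hence is unbounded for fixed $t$). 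Since $\ol u\leq V_F+c$ and $V_F(r\xi)=r\cos(d_S(\xi,F))<r$ for $\xi\notin F$, one has $\psi_{\lambda,t}(r\xi)>\ol u(r\xi)$ for $r$ large whenever $F\neq S^{n-1}$. So no translated hyperboloid is ever admissible in your Perron class, your proof that the class is nonempty collapses, and with it the lower bound $\ul u>V_F$ and the asymptotics (\ref{limsup barrier intro}) for $\ul u$. A symptom of the problem is that you never use the standing hypothesis that $F$ is a union of arcs of circles.

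That hypothesis is precisely what the paper exploits: for each arc $S_i$ in $F=\cup_i S_i$ it takes the Hano--Nomizu \emph{semitrough}, a constant-Gauss-curvature graph over $\R^2$ whose set of lightlike directions at infinity is a half-circle and which stays at bounded distance from $V_{S^+}$; extended as a cylinder to $\R^n$ (so that $H_2=k$, the product of the two nonzero principal curvatures) and moved by a Lorentz transformation onto $S_i$, it gives a spacelike $H_2=k$ function $u_i$ with $\sup|u_i-V_{S_i}|<\infty$, and $\ul u:=\sup_i u_i$. The comparison $\ul u<\ol u$ is then \emph{not} a one-line strong-maximum-principle argument on $\R^n$ (note $\ul u$ is only a supremum, not a priori $C^2$): the paper compares each $u_i$ with $\ol u$ on two-dimensional slices, using an appendix lemma that restricting $\ol u$ to a $2$-plane does not increase $H_1$, the arithmetic-geometric mean inequality $H_1[\tilde u_i]\geq 2\sqrt k$ for the semitrough, and a scaling trick to upgrade $\leq$ to $<$. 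Your treatment of the upper barrier (citing Treibergs) matches the paper, and your derivation of (\ref{limsup barrier intro2}) from (\ref{inequality barriers}) is fine, but the lower barrier needs the semitroughs; full hyperboloids cannot be made to lie below a graph asymptotic to $V_F$ unless $F$ is the whole sphere.
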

Solving a sequence of Dirichlet problems between the barriers $\ul u$ and $\ol u,$ and extracting a convergent subsequence thanks to local estimates \cite{Bay,Bay2,U}, we will first construct an entire spacelike hypersurface of constant negative scalar curvature, and whose set of lightlike directions at infinity is $F$:
\begin{theorem}\label{entire elliptic problem} Let $F$ be a closed subset of $S^{n-1}$ as above. Then there exists $u:\R^n\rightarrow\R,$ admissible, solution of
\begin{equation}\label{entire elliptic equation}
H_2[u]=1\mbox{ in }\R^n
\end{equation}
such that, for all $\xi\in F,$
\begin{equation}\label{asymptotic u along F}
\lim_{r\rightarrow+\infty}u(r\xi)-r=0
\end{equation}
and
\begin{equation}\label{asymptotic u on Rn}
\sup_{\R^n}\left|u-V_F\right|<+\infty.
\end{equation}
In particular, the set of lightlike directions at infinity of $u$ is the set $F.$
\end{theorem}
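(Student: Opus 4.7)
The plan is to apply Proposition~\ref{proposition intro barriers} with $k=1$ and some fixed $h<\sqrt{2n/(n-1)}$ to obtain barriers $\ul u,\ol u$, solve the Dirichlet problem for $H_2=1$ between these barriers on an exhaustion of $\R^n$ by balls, and pass to the limit using the local a priori estimates of \cite{Bay,Bay2,U}. With this choice $\ol u$ is a (strict) supersolution of $H_2=1$: by the Mac-Laurin inequality (\ref{mac laurin}),
\begin{equation*}
H_2[\ol u]^{1/2}\leq\sqrt{\frac{n-1}{2n}}\,H_1[\ol u]=\sqrt{\frac{n-1}{2n}}\,h<1,
\end{equation*}
so $H_2[\ol u]<1$ on $\R^n$, while $\ul u$, being a supremum of spacelike functions with $H_2=1$, is a viscosity subsolution.

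For each $R>0$, I would then solve the Dirichlet problem
\begin{equation*}
H_2[u_R]=1\text{ in }B_R,\qquad u_R=\ol u\text{ on }\partial B_R,
\end{equation*}
in the class of admissible functions, invoking the existence theory of \cite{Bay,Bay2}; the boundary data is compatible because $\ol u$ is admissible and $\partial B_R$ is strictly convex. The standard comparison principle for $H_2$ on admissible functions, applied on $B_R$ with the subsolution $\ul u$ (which satisfies $\ul u\leq\ol u=u_R$ on $\partial B_R$) and the supersolution $\ol u$, then yields
\begin{equation*}
\ul u\leq u_R\leq\ol u\quad\text{on}\quad\overline{B_R}.
\end{equation*}

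To produce an entire solution, I would appeal to the interior estimates of \cite{Bay,Bay2,U} for admissible solutions of $H_2=1$ in the Minkowski setting: on any fixed ball $B_{R_0}$, the family $\{u_R\}_{R>2R_0}$ is locally uniformly bounded and locally uniformly spacelike (since $\ul u$ and $\ol u$ are spacelike), so these estimates supply uniform interior $C^2$-bounds, which are upgraded by Evans--Krylov and Schauder to uniform $C^{2,\alpha}_{\mathrm{loc}}$-bounds. A diagonal extraction as $R\to\infty$ then produces an admissible limit $u\in C^\infty(\R^n)$ solving $H_2[u]=1$ with $\ul u\leq u\leq\ol u$; admissibility persists in the limit since (\ref{mac laurin}) provides the uniform lower bound $H_1[u_R]\geq\sqrt{2n/(n-1)}$.

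The sandwich $V_F<\ul u\leq u\leq\ol u<V_F+c$ immediately gives (\ref{asymptotic u on Rn}), and combining it with (\ref{limsup barrier intro}) yields (\ref{asymptotic u along F}). Conversely, for $\xi\in S^{n-1}\setminus F$ one has $V_F(\xi)<1$, and the upper bound $u(r\xi)\leq\ol u(r\xi)\leq rV_F(\xi)+c$ forces $\limsup_{r\to\infty} u(r\xi)/r\leq V_F(\xi)<1$, so $\xi$ is not a lightlike direction of $u$ at infinity. I expect the main obstacle to be the Dirichlet step: it requires a solvability result for admissible boundary data on strictly convex domains in Minkowski space, together with a matching comparison principle; both are provided by \cite{Bay,Bay2}, and once they are granted, the barrier verification, the passage to the limit, and the asymptotic analysis reduce to routine applications of the cited tools.
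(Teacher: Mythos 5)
Your overall strategy (solve Dirichlet problems $H_2[u_R]=1$ in $B_R$ with boundary data $\ol u$, trap $u_R$ between the barriers, extract a limit by local estimates) is the same as the paper's, and your trapping argument and asymptotic analysis are fine. But there are two genuine gaps.

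First, the passage to the limit is not "routine". You assert that the family $\{u_R\}$ is "locally uniformly spacelike (since $\ul u$ and $\ol u$ are spacelike)"; this does not follow. Being sandwiched between two spacelike functions gives a $C^0$ bound but no uniform bound $\sup_{B_{R_0}}|Du_R|\leq 1-\theta$ with $\theta$ independent of $R$, and without such a bound the equation degenerates and no interior $C^2$ or Evans--Krylov estimate is available. The uniform local gradient bound is precisely the hard part: the local $C^1$ estimate of \cite{Bay2} (Proposition 4.1 there) requires an auxiliary spacelike function $\psi$ with $\psi<\ul u$ on the compact set $K$ and $\psi\geq\ol u$ near infinity, whose existence is a nontrivial consequence of the asymptotics of the barriers (Lemma \ref{construction psi}, built from the sets $F_\epsilon$ and the estimate (\ref{property VF_epsilon 2})). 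Likewise the local $C^2$ estimate of \cite{Bay2} needs the strictly convex auxiliary function $\Phi$ of Lemma \ref{construction phi}, which exploits the strict convexity of $\ol u$. Your proof must construct these auxiliary functions (or an equivalent device); citing the interior estimates without them leaves the extraction step unjustified.

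Second, you do not treat the case where $F$ is contained in an affine hyperplane, which the theorem allows. In that case $\ol u$ is not strictly convex (by Remark \ref{invariance barriers} it is constant in the directions orthogonal to the hyperplane), so the solvability of the Dirichlet problem in $B_R$ — which in \cite{Bay,U} rests on the existence of a strictly convex admissible function attached to the boundary data — fails, as does the construction of $\Phi$ for the local $C^2$ estimate, and (if $F$ spans only a line) even the admissibility of $\ol u$ needed for your $H_2$-comparison. The paper handles this by a Lorentz transformation and dimensional reduction: one solves the problem on the smallest subspace $\R^k$ in which $F$ is non-degenerate and extends the solution trivially to $\R^n$, using the invariance of the barriers. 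A minor further remark: the paper deduces $u_R\leq\ol u$ by comparing mean curvatures ($H_1[u_R]\geq\sqrt{2n/(n-1)}>h=H_1[\ol u]$ via Mac-Laurin), which avoids having to know that $\ol u$ is admissible for the $H_2$-comparison; your route via $H_2[\ol u]<1$ is acceptable in the non-degenerate case but the $H_1$-comparison is the more robust choice.
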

\begin{remark}
Uniqueness of a solution of (\ref{entire elliptic equation}) satisfying (\ref{asymptotic u along F}) and (\ref{asymptotic u on Rn}) is still an open question.
\end{remark}
We then study the entire scalar curvature flow. Starting with a smooth spacelike entire and strictly convex function between the barriers which has bounded scalar curvature, we prove that the entire scalar curvature flow is defined for all time and converges to a solution of the prescribed constant scalar curvature equation:
\begin{theorem}\label{entire parabolic problem}
Let $F$ be as above. We suppose that $F$ is not included in any affine hyperplane of $\R^n.$ Let $h,k$ be two positive constants such that (\ref{condition h k}) holds, and let $\ul u,\ol u$ be the barriers given by Proposition \ref{proposition intro barriers}. Let $u_0:\R^n\rightarrow\R$ be a smooth spacelike and strictly convex function such that
\begin{equation}\label{u0 between barriers}
\ul u<u_0<\ol u
\end{equation}
and 
$$\displaystyle{1\leq H_2[u_0]\leq k.}$$ The parabolic problem 
\begin{equation}
\label{entire parabolic equation}
\left\{\begin{array}{rcl}
-\frac{\dot u}{\sqrt{1-|Du|^2}}+{H_2[u]}^{\frac{1}{2}}& = & 1\mbox{ in }\R^n\times (0,+\infty)\\
u(x,0)& = &u_0(x)\mbox{ on }\R^n\times\{0\},
\end{array}\right.
\end{equation}
has a smooth spacelike solution 
$$u\in C^{\infty}(\R^n\times(0,+\infty))\cap C^{1,1;0,1}(\R^n\times[0,+\infty)).$$
Moreover 
\begin{equation}\label{u between barriers1}
\ul u\leq u\leq \ol u
\end{equation}
for all time, and $u$ converges to a solution of (\ref{entire elliptic equation}) as the time $t$ tends to infinity.
\end{theorem}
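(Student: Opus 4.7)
The plan is to approximate (\ref{entire parabolic equation}) by Dirichlet problems on expanding balls $B_R$, pass to the limit using $R$-independent interior estimates, and then exploit monotonicity to obtain convergence as $t\to+\infty$.

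For each $R>0$ I would solve (\ref{entire parabolic equation}) on $B_R\times(0,+\infty)$ with initial value $u_0|_{B_R}$ and time-independent Dirichlet datum $u_0|_{\partial B_R}$. Short time existence in the admissible cone is standard since $H_2^{1/2}$ is smooth, concave and uniformly elliptic on a neighbourhood of $u_0$. Differentiating the equation in $t$ gives a linear parabolic equation for $\dot u_R$; since $\dot u_R(\cdot,0)=\sqrt{1-|Du_0|^2}\,(H_2[u_0]^{1/2}-1)\in[0,k^{1/2}-1]$ and $\dot u_R\equiv 0$ on $\partial B_R$, the maximum principle yields $0\leq\dot u_R\leq k^{1/2}-1$ for all time, so $H_2[u_R]^{1/2}\geq 1$ is preserved and admissibility persists. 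Moreover, $\ol u$ is a stationary supersolution of (\ref{entire parabolic equation}) (because $H_2[\ol u]^{1/2}<1$ by Mac-Laurin and (\ref{condition h k})), while $\ul u$ is a viscosity subsolution of $H_2\geq 1$ by construction; since $\ul u<u_0<\ol u$ initially and on $\partial B_R$, comparison gives $\ul u\leq u_R\leq\ol u$ on $B_R\times[0,+\infty)$, which rules out finite-time blow up.

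To pass to the limit $R\to+\infty$, the local interior estimates of \cite{Bay,Bay2,U} give, on each compact $K\subset\R^n\times(0,+\infty)$ and for $R$ large enough, a uniform spacelike bound on $|Du_R|$ from the $C^0$ barrier control, then a uniform $C^2$ estimate using the preserved lower bound $H_2[u_R]\geq 1$, and finally uniform $C^{k,\alpha}$ bounds for every $k$ via concavity of $H_2^{1/2}$ (Evans--Krylov) and Schauder theory. A diagonal extraction yields an entire solution $u\in C^\infty(\R^n\times(0,+\infty))$ of (\ref{entire parabolic equation}) with $\ul u\leq u\leq\ol u$; the bound $0\leq\dot u_R\leq k^{1/2}-1$ passes to the limit and, together with $u(\cdot,0)=u_0$, gives the asserted $C^{1,1;0,1}$ regularity up to $t=0$. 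The main obstacle of the construction is precisely this step: obtaining $C^2$ and admissibility bounds that depend only on interior $C^0$ data, independently of any boundary values, which is what makes the entire (non-compact) setting work.

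For the large time behaviour, monotonicity $\dot u\geq 0$ together with the ceiling $u\leq\ol u$ gives a pointwise limit $u_\infty(x):=\lim_{t\to+\infty} u(x,t)$ with $\ul u\leq u_\infty\leq\ol u$. Integrating the identity $\dot u=\sqrt{1-|Du|^2}\,(H_2[u]^{1/2}-1)$ in time on each ball $B_R$ yields
\[
\int_0^{+\infty}\!\!\int_{B_R}(H_2[u]^{1/2}-1)\sqrt{1-|Du|^2}\,dx\,dt=\int_{B_R}(u_\infty-u_0)\,dx<+\infty,
\]
so along some sequence $t_j\to+\infty$ one has $H_2[u(\cdot,t_j)]^{1/2}\to 1$ locally in $L^1$. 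Applying the $C^\infty_{\mathrm{loc}}$ estimates on the time windows $[t_j-1,t_j+1]$ (legitimate by translation invariance of the equation in $t$ and the time-independent barriers) upgrades this to smooth subsequential convergence of $u(\cdot,t_j)$ to $u_\infty$, so $u_\infty$ is admissible and satisfies $H_2[u_\infty]=1$ on $\R^n$. Because the flow is monotone and uniformly bounded in $C^\infty_{\mathrm{loc}}$, the whole family $u(\cdot,t)$ converges smoothly on compact subsets to $u_\infty$, concluding the proof.
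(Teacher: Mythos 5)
Your overall architecture (exhaustion by parabolic Dirichlet problems on balls, trapping between $\ul u$ and $\ol u$, $R$-independent local estimates, then monotone convergence in time) is the paper's, and your large-time argument is essentially identical to the one in Section \ref{section entire solutions parabolic}. However, there are concrete gaps. First, the Dirichlet problem you pose on $B_R$ is in general \emph{incompatible at the corner} $\partial B_R\times\{0\}$: the equation forces $\dot u_R(\cdot,0)=\sqrt{1-|Du_0|^2}\,(H_2[u_0]^{1/2}-1)$ while the lateral condition forces $\dot u_R=0$ on $\partial B_R$, and these disagree unless $H_2[u_0]=1$ on $\partial B_R$. Theorem \ref{theorem parabolic Dirichlet problem} requires compatibility conditions of all orders, so your approximating problems need not have smooth solutions up to the corner, and your maximum-principle argument for $\dot u_R$ (which needs $\dot u_R$ continuous on the closed parabolic domain) breaks down. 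The paper resolves this by replacing the right-hand side $1$ with $\hat f=\eta\,\zeta(t/\epsilon)\,[H_2[u_0]^{1/2}-1]+1$, which matches $H_2[u_0]^{1/2}$ near the corner, equals $1$ on $B_{R-1}$ and for $t\geq\epsilon$, and still admits the uniform velocity bound (Proposition \ref{lemma velocity bound} with $\epsilon=1/K$) and the same barriers. Second, your comparison ``$\ol u$ is a stationary supersolution because $H_2[\ol u]^{1/2}<1$'' is not justified: the operator $H_2^{1/2}$ is elliptic only on admissible functions, and the interpolants between $u_R$ and $\ol u$ need not be admissible. The paper instead compares via the quasilinear operator $P_1[v]=-\dot v/\sqrt{1-|Dv|^2}+c(n)H_1[v]$, using the Mac-Laurin inequality to get $P_1[u]\geq 1\geq c(n)h=P_1[\ol u]$.

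Third, the ``local interior estimates of \cite{Bay,Bay2,U}'' you invoke are \emph{elliptic}; they do not cover the parabolic setting, and establishing the $R$-independent local $C^1$ and $C^2$ estimates for the flow is the main technical content of the paper (Sections \ref{section local C1 estimate} and \ref{section local C2 estimate}). These require the auxiliary spacelike function $\psi$ of Lemma \ref{construction psi} (used to build a time function adapted to the barriers) and the strictly convex function $\Phi$ of Lemma \ref{construction phi} (whose existence is exactly where the hypothesis that $F$ is not contained in an affine hyperplane enters); your proposal never uses that hypothesis, nor the strict convexity of $u_0$, which is needed for the boundary $C^2$ estimates of the approximating Dirichlet problems. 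Acknowledging this step as ``the main obstacle'' is right, but as written the proposal assumes precisely the estimates that have to be proved.
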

\begin{remark} Note that (\ref{entire parabolic equation}) describes hypersurfaces moving with normal velocity given by the square root of the scalar curvature,
$$\dt X=\left(H_2[X]^{\frac{1}{2}}-1\right)N,$$
where $X$ is the embedding vector of the hypersurfaces. 
\end{remark}
\begin{remark}
If $F$ is included in some affine hyperplane, condition (\ref{u0 between barriers}) with $u_0$ strictly convex is not possible: suppose that $\xi\in\R^n$ belongs to $F^{\perp};$ then $V_F(\xi)=0,$ and, by (\ref{inequality barriers}) and (\ref{u0 between barriers}), $0<u_0(\lambda\xi)<c$ for all $\lambda\in\R,$ which is impossible if $\xi\neq 0$ and if $u_0$ is a strictly convex function. Note that the strictly convexity of $u_0$ is a crucial hypothesis for the resolution of the parabolic Dirichlet problem, Section \ref{section parabolic Dirichlet problem} . See also \cite{Bay,U}.
\end{remark}
\begin{remark}
If $u_0:\R^n\rightarrow\R$ is a spacelike and strictly convex function such that $1\leq H_2[u_0]\leq k$ and $\lim_{|x|\rightarrow+\infty}u_0(x)-|x|=0,$ we get the following: taking for the lower barrier $\ul u$ (resp. for the upper barrier $\ol u$) the hyperboloid asymptotic to the cone $x_{n+1}=|x|$ and of scalar curvature $H_2=k'>k$ (resp. of mean curvature $H_1=h<\sqrt{\frac{2n}{n-1}}$), by the maximum principle we have $\ul u< u_0< \ol u$, and Theorem \ref{entire parabolic problem} shows that problem (\ref{entire parabolic equation}) has a (unique) solution $u$ such that $\ul u\leq u\leq\ol u$ during the evolution. Moreover $u$ converges to the hyperboloid of scalar curvature $H_2=1,$ as $t$ tends to infinity.
\end{remark}
\begin{remark}
By scaling $u$ in Theorem \ref{entire elliptic problem}, we obtain an admissible solution of $H_2[u]=\lambda^2$ in $\R^n$ such that (\ref{asymptotic u along F}) and (\ref{asymptotic u on Rn}) hold. Moreover, by scaling the barriers $\ul u,\ol u$ in Proposition \ref{proposition intro barriers}, we obtain a result similar to Theorem \ref{entire parabolic problem} for the parabolic problem
\begin{equation}
\left\{\begin{array}{rcl}
-\frac{\dot u}{\sqrt{1-|Du|^2}}+{H_2[u]}^{\frac{1}{2}}& = & \lambda\mbox{ in }\R^n\times (0,+\infty)\\
u(x,0)& = &u_0(x)\mbox{ on }\R^n\times\{0\},
\end{array}\right.
\end{equation}
if $\ul u<u_0<\ol u$ and $\lambda^2\leq H_2[u_0]\leq\lambda^2k$ hold.
\end{remark}
Let us quote some related papers: in Minkowski space, entire spacelike hypersurfaces of constant mean curvature are classified in \cite{T} and entire hypersurfaces of constant Gauss curvature are studied in \cite{GJS,BaySch}. In \cite{Bay}, we construct entire hypersurfaces with prescribed scalar curvature and given values at infinity which stay at a bounded distance of a lightcone. 

The entire mean curvature flow in Minkowski space is studied in \cite{Ecker}, and the entire Gauss curvature flow in \cite{BaySch}. The scalar curvature flow in globally hyperbolic Lorentzian manifolds having a compact Cauchy hypersurface is studied in \cite{Ge1,Ge2} and \cite{E}. 

Finally, the parabolic Dirichlet problem for the scalar curvature operator in the euclidian space is solved in \cite{IL1,IL2}.
\\

The outline of the paper is as follows. We recall the definition of the set of lightlike directions at infinity of a spacelike and convex function in Section \ref{section lightlike directions}. In Section \ref{section barriers} we construct the barriers with given set of lightlike directions at infinity, and construct the auxiliary functions needed for the local estimates. The entire solutions of the prescribed constant scalar curvature equation are constructed Section \ref{section entire solution elliptic}. We introduce further notation and recall the evolution equations of various geometric quantities Section \ref{section evolution equations}, and we study the parabolic Dirichlet problem Section \ref{section parabolic Dirichlet problem}. In Section \ref{section entire solutions parabolic} we construct the entire scalar curvature flow, once local $C^1$ and $C^2$ estimates are known, and we prove that the flow converges. We carry out the local estimates in Sections \ref{section local C1 estimate} and \ref{section local C2 estimate}. A short appendix ends the paper.
\section{The set of lightlike directions at infinity of an entire spacelike hypersurface of constant scalar curvature}\label{section lightlike directions}
Let $u:\R^n\rightarrow\R$ be a spacelike and \textit{convex} function. Following Treibergs \cite{T}, its \textit{blow down} $V_u:\R^n\rightarrow\R$ is defined by
$$V_u(x)=\lim_{r\rightarrow+\infty}\frac{u(rx)}{r}.$$
As in \cite{T}, we denote by $Q$ the set of the convex homogeneous of degree one functions whose gradient has norm one whenever defined. The following holds:
\begin{lemma}
For every convex and spacelike solution $u$ of the prescribed scalar curvature equation (\ref{entire elliptic equation}), the blow down $V_u$ belongs to $Q.$ 
\end{lemma}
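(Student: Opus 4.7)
The proof has three ingredients, the first two routine and the third being the main point.

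First I verify that $V_u$ is well-defined, convex, and positively homogeneous of degree one. For fixed $x\in\R^n\setminus\{0\}$, convexity of $u$ makes $r\mapsto (u(rx)-u(0))/r$ nondecreasing on $(0,\infty)$, while spacelikeness forces $|u(rx)-u(0)|\leq r|x|$. Hence the limit $V_u(x)=\lim_{r\to\infty}u(rx)/r$ exists and satisfies $|V_u(x)|\leq |x|$. Convexity and positive homogeneity pass to the limit from the corresponding properties of $x\mapsto u(rx)/r$; none of this uses the scalar curvature equation.

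The heart of the argument is to show $|DV_u|=1$ at every differentiability point. Let $\xi_0\neq 0$ be such a point and set $\nu:=DV_u(\xi_0)$. The bound $V_u\leq |\cdot|$ yields $|\nu|\leq 1$, and Euler's identity for positively homogeneous $C^1$ functions gives $V_u(\xi_0)=\langle\nu,\xi_0\rangle$, so the affine hyperplane $\Pi:\ x_{n+1}=\langle\nu,x\rangle$ through the origin is tangent to $\graph V_u$ at $(\xi_0,V_u(\xi_0))$. It remains to exclude $|\nu|<1$, in which case $\Pi$ is spacelike and $u(r\xi_0)/r\to\langle\nu,\xi_0\rangle<|\xi_0|$, so $u$ lags behind the lightcone by an amount linear in $r$ along the ray $\R_+\xi_0$.

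To rule this out I would compare $u$ with the standard CSC hyperboloid $v_a(x)=\sqrt{|x-x_*|^2+a^2}+c_*$, which is smooth, admissible, and solves $H_2[v_a]=n(n-1)/a^2$; fixing $a$ so that this constant equals $1$ yields a model solution asymptotic to the lightcone with vertex $(x_*,c_*)$. Using the slack $|\xi_0|-\langle\nu,\xi_0\rangle>0$, I would select $x_*$ and then decrease $c_*$ from $+\infty$ continuously until the global inequality $v_a\geq u$ first becomes an equality at some interior point $x_1\in\R^n$; at $x_1$ one has $v_a=u$, $Dv_a=Du$, and $v_a\geq u$ nearby. Ellipticity of $H_2$ on admissible functions together with the strong maximum principle then forces $v_a\equiv u$ on the connected component of the touching set, hence on all of $\R^n$, which contradicts $|\nu|<1$ since the hyperboloid's own blow-down has gradient of norm one at every differentiability point.

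The main obstacle is this last step: one must verify that as $c_*$ decreases the infimum of $v_a-u$ is actually attained at an \emph{interior} point rather than being pushed off to infinity. This is where the strict inequality $|\nu|<1$ plays the decisive role, by opening a positive linear gap between the asymptotic cones of $v_a$ and of $u$ in the direction $\xi_0$ and ensuring uniform lower bounds on $v_a-u$ outside a large ball. The overall scheme mirrors Treibergs' rigidity argument for constant mean curvature, transposed to the fully nonlinear $H_2$ operator.
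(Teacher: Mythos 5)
Your preliminary steps (existence, convexity and degree-one homogeneity of $V_u$, and the reduction to proving $|DV_u(\xi_0)|=1$ at points of differentiability) are fine, and an interior tangency point would indeed let the strong maximum principle force $u\equiv v_a$ and give the contradiction. The genuine gap is exactly the step you flag, and the mechanism you propose to close it is not valid. The slack $|\xi_0|-\langle\nu,\xi_0\rangle>0$ produces a linear gap between $v_a$ and $u$ only in a cone of directions around $\xi_0$; it gives no lower bound whatsoever on $v_a-u$ in the remaining directions. Under the contradiction hypothesis $u$ may still be asymptotic to light rays in directions $\xi\in L_u$ away from $\xi_0$, while $v_a$ is asymptotic to the full cone $|x-x_*|+c_*$, so along such a ray $v_a-u$ tends to the bounded quantity $c_*-\langle x_*,\xi\rangle-\lim_{r}(u(r\xi)-r)$. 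Hence at the critical value of $c_*$ one only knows $\inf_{\R^n}(v_a-u)=0$, and this infimum may be approached solely along rays at infinity: the ``first touching'' can escape to infinity, there is no interior contact point, and the tangency principle has nothing to act on. No choice of $x_*$ repairs this, since $\langle x_*,\xi\rangle$ is bounded while the asymptotic offsets of $u$ along $L_u$ are arbitrary; for graphs asymptotic to a light ray a sliding hyperboloid genuinely need never touch at a finite point (this already happens for the semitrough-type solutions of this paper), so the difficulty is structural, not technical.

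The standard way out — and what the paper's one-line proof means when it says Treibergs' barrier from Theorem 1 of \cite{T} works verbatim for $H_2$ — is to make the comparison local over large balls along the bad ray, so attainment is automatic. Convexity gives $u\leq u(0)+V_u$, and differentiability of $V_u$ at $\xi_0$ gives, for each $\epsilon>0$, $u(x)\leq\langle\nu,x\rangle+\epsilon|x|+C$ on a fixed cone of directions around $\xi_0.$ On balls $B_{\theta R}(y_R),$ $y_R=R\xi_0/|\xi_0|,$ contained in that cone, compare $u$ with the hyperboloid cap $w(x)=\sqrt{|x-y_R|^2+a^2}-\sqrt{(\theta R)^2+a^2}+\max_{\partial B_{\theta R}(y_R)}u,$ with $a$ chosen so that $H_2[w]=1$: since $w\geq u$ on the boundary, the comparison principle on the bounded ball yields $u(y_R)\leq\max_{\partial B_{\theta R}(y_R)}u-\theta R+a.$ Combining with $u(y_R)\geq(\langle\nu,\xi_0/|\xi_0|\rangle-\epsilon)R,$ which follows from the blow-down limit along the ray, gives $(1-|\nu|)\theta R\leq a+C+(2+\theta)\epsilon R,$ a contradiction for $\epsilon$ small and $R$ large. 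This retains your key idea (comparison with the constant-$H_2$ hyperboloid) but uses it as a Dirichlet barrier on large balls rather than as a globally sliding surface, which is precisely the cited construction.
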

\begin{proof}
This result is proved in \cite{T}, Theorem 1 for the prescribed mean curvature equation, using a barrier construction. The same barrier can be used for the prescribed constant scalar curvature equation as well.
\end{proof}
The set $Q$ is in one-to-one correspondence with the set of closed subsets of $S^{n-1};$ see \cite{CT}, Lemma 4.3.
\begin{lemma}\cite{CT,T}.
If $F$ is a closed non-empty subset of $S^{n-1},$ 
$$V_F(x):=\sup_{\lambda\in F}\langle x,\lambda\rangle$$
belongs to $Q;$ the map $F\mapsto V_F$ is one-to-one, and its inverse is the map
$$w\in Q\mapsto F=\{x\in S^{n-1}\subset\R^n:\ w(x)=1\}.$$
\end{lemma}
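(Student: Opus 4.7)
My plan is to establish the three pieces of the lemma separately: first $V_F\in Q$; second, the identity $F=\{\xi\in S^{n-1}:V_F(\xi)=1\}$, which gives injectivity of $F\mapsto V_F$; and third, that every $w\in Q$ is of the form $V_F$, which gives surjectivity and identifies the inverse map.

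For $V_F\in Q$: as a supremum of the linear functionals $x\mapsto\langle x,\lambda\rangle$ over the compact set $F$, the function $V_F$ is finite, convex, and positively homogeneous of degree one. At a point $x_0$ where $V_F$ is differentiable, the subdifferential reduces to $\{DV_F(x_0)\}$ and, by the standard rule for suprema of linear functions, equals the closed convex hull of the maximisers $\{\lambda\in F:\langle x_0,\lambda\rangle=V_F(x_0)\}$; this set is nonempty by compactness of $F$, and being a singleton forces $DV_F(x_0)$ to be one of the $\lambda\in F\subset S^{n-1}$, so $|DV_F(x_0)|=1$.

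The identity $F=\{\xi\in S^{n-1}:V_F(\xi)=1\}$ is essentially Cauchy-Schwarz: if $\xi\in F$ then $V_F(\xi)\geq\langle\xi,\xi\rangle=1$ while $V_F(\xi)\leq 1$ because every $\lambda\in F$ lies on $S^{n-1}$; conversely, if $\xi\in S^{n-1}$ satisfies $V_F(\xi)=1$, compactness of $F$ provides a maximiser $\lambda_0\in F$ with $\langle\xi,\lambda_0\rangle=1$, and the equality case of Cauchy-Schwarz forces $\xi=\lambda_0\in F$.

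The main obstacle is the surjectivity. Given $w\in Q$, I would use the support-function representation: as a convex positively homogeneous-of-degree-one function, $w(x)=\sup_{\mu\in K}\langle x,\mu\rangle$ with $K:=\partial w(0)$ closed and convex, and $\partial w(x_0)=\{\mu\in K:\langle x_0,\mu\rangle=w(x_0)\}$ for every $x_0$. Since $|Dw|=1$ on the dense set of differentiability points, $w$ is $1$-Lipschitz with $w(0)=0$, so $K\subset\overline{B}(0,1)$. At any differentiability point $x_0\neq 0$ the unique subgradient $Dw(x_0)\in K$ has norm one, hence lies in $K\cap S^{n-1}$; applying the Cauchy-Schwarz argument of the previous step to this unit vector yields $w(Dw(x_0))=1$, so $Dw(x_0)$ belongs to $F:=\{x\in S^{n-1}:w(x)=1\}$. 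Euler's relation $\langle x_0,Dw(x_0)\rangle=w(x_0)$ then gives $w(x_0)\leq V_F(x_0)$ at every differentiability point; by density and continuity of $w$ and $V_F$ this promotes to $w\leq V_F$ on all of $\R^n$, and the reverse inequality $V_F\leq w$ is immediate from $F\subset K$. The delicate point is precisely this use of $|Dw|=1$ to force the gradient into $F$ rather than merely into the Lipschitz ball $K$.
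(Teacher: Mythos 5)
Your argument is essentially correct, but note that the paper itself offers no proof of this lemma: it is quoted verbatim from Choi--Treibergs (Lemma 4.3 of \cite{CT}) and Treibergs \cite{T}, so there is no in-paper argument to compare against. Taken on its own, your convex-analysis proof is sound: the subdifferential formula for a supremum of linear functionals correctly forces the gradient of $V_F$, where it exists, to be a single maximiser $\lambda\in F\subset S^{n-1}$; the equality case of Cauchy--Schwarz gives $F=\{\xi\in S^{n-1}:V_F(\xi)=1\}$ and hence injectivity; and the surjectivity argument via the support-function representation $w=h_K$, $K=\partial w(0)\subset\overline{B}(0,1)$, together with $|Dw|=1$ and Euler's relation, correctly yields $w\le V_F$ on the dense set of differentiability points. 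The one step you assert without justification is $F\subset K$, needed for the reverse inequality $V_F\le w$: a unit vector $\xi$ with $w(\xi)=h_K(\xi)=1$ need not a priori lie in $K$, but it does, by the same Cauchy--Schwarz equality argument (the maximiser $\mu\in K$ of $\langle\xi,\cdot\rangle$ satisfies $1=\langle\xi,\mu\rangle\le|\mu|\le1$, forcing $\mu=\xi$). You should also record explicitly that $F$ is non-empty and closed --- non-emptiness follows because $Dw(x_0)\in F$ at any differentiability point $x_0\neq0$, and closedness from continuity of $w$ --- so that the inverse map genuinely lands in the domain of $F\mapsto V_F$. With these two small additions the proof is complete.
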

In particular, the blow down of a convex solution $u$ of (\ref{entire elliptic equation}) is determined by the set of its \textit{lightlike directions} at infinity
$$L_u:=\{x\in S^{n-1}:\ V_u(x)=1\}.$$
Note that here, and in contrast with \cite{T,BaySch}, it is not known if a spacelike entire function of constant negative scalar curvature is necessarily convex. Nevertheless, the solutions $u$ constructed in this article are such that $V_F\leq u\leq V_F+c,$ where $F$ is a closed subset belonging to $S^{n-1}$ and $c$ is a constant. In that case, the blow down  $V_u$ and the set of lightlike directions $L_u$ are well-defined, and satisfy
$$V_u=V_F\mbox{ and }L_u=F.$$ 
We finally recall a useful formula. Denoting by $d_S$ the canonical distance on the sphere $S^{n-1},$ we proved the following formula in \cite{BaySch}, Lemma 4.6: for every $x\in S^{n-1},$
\begin{equation}\label{formula VF}
V_F(x)=\cos(d_S(x,F)).
\end{equation}
\section{The construction of the barriers}\label{section barriers}
\subsection{The semitrough}\label{section definition semitrough}
We first recall the properties of the \textit{standard semi\-trough} of constant Gauss curvature in the Minkowski space $\R^{2,1},$ constructed in \cite{HN}: this is the unique spacelike function $\tilde u:\R^2\rightarrow\R$ whose graph has constant Gauss curvature one, and which is such that  
$$D\tilde u(\R^2)=\{(x_1,x_2)\in B_1:\ x_1>0\},$$
and
\begin{equation}\label{asymptotic semitrough}
\lim_{|x|\rightarrow+\infty}\tilde u(x)-V_{S^+}(x)=0.
\end{equation}
Here $B_1$ is the unit ball in $\R^2$ centered at $0,$ and $S^+$ is the arc of the circle $S^1=\partial B_1$ defined by $S^+:=\{(x_1,x_2)\in S^1:\  x_1\geq 0\}.$ Let $S$ be a closed arc of circle on the sphere $S^{n-1}.$ This is a subset of the form $f(S^+\times \{0\}),$ where
\begin{equation}\label{definition S0}
S^+\times\{0\}=\{(x_1,x_2,0,\ldots,0)\in S^{n-1}:\ x_1\geq 0\}
\end{equation}
and $f$ is a conformal transformation of $S^{n-1}.$ From the existence of the standard semitrough, we deduce the following
\begin{lemma} \label{lemma semitrough segment} Let $S$ be a closed arc of circle on $S^{n-1},$ and let $k>0.$ There exists a spacelike entire function $u$ such that 
$$H_2[u]=k\mbox{ and }\sup_{\R^n}|u-V_S|<+\infty.$$
\end{lemma}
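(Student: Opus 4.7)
My plan is to bootstrap from the $2$-dimensional standard semitrough: first lift it trivially to $\R^n$ to realize the standard arc $S^+\times\{0\}$ with $k=1$, then transport to an arbitrary arc $S$ via the conformal--Lorentz correspondence, and finally rescale to reach the curvature value $k$.

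For the standard case $S=S^+\times\{0\}$ and $k=1$, I take the semitrough $\tilde u:\R^2\to\R$ of Section \ref{section definition semitrough} and set $u(x_1,\dots,x_n):=\tilde u(x_1,x_2)$. The graph of $u$ in $\R^{n,1}$ is the product of $\graph(\tilde u)\subset\R^{2,1}$ with the spacelike subspace $\{(0,0,x_3,\dots,x_n,0)\}$; its principal curvatures are the two principal curvatures $\kappa_1,\kappa_2$ of $\graph(\tilde u)$ together with $n-2$ vanishing ones, and hence
\[
H_2[u]=\sum_{i<j}\kappa_i\kappa_j=\kappa_1\kappa_2=1,
\]
because $\kappa_1\kappa_2$ equals the Gauss curvature of the semitrough. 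Spacelikeness follows from $|Du|=|D\tilde u|<1$, and since $V_{S^+\times\{0\}}(x)=V_{S^+}(x_1,x_2)$, the limit (\ref{asymptotic semitrough}) gives $\sup_{\R^n}|u-V_{S^+\times\{0\}}|=\sup_{\R^2}|\tilde u-V_{S^+}|<+\infty$.

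For a general closed arc $S=f(S^+\times\{0\})$, I lift the conformal map $f$ of $S^{n-1}$ to a time-orientation preserving Lorentz transformation $\Lambda\in O^+(n,1)$ of $\R^{n,1}$, using the standard identification of the conformal group of $S^{n-1}$ with the projective orthochronous Lorentz group, and set $M:=\Lambda(\graph u)$. Since $\Lambda$ is an isometry of $\R^{n,1}$, $M$ is a complete, future-directed spacelike hypersurface with $H_2\equiv 1$. Moreover $\Lambda$ sends the future cone $\{x_{n+1}=V_{S^+\times\{0\}}(x)\}$ to $\{x_{n+1}=V_S(x)\}$, so the slab bound $V_{S^+\times\{0\}}\leq u\leq V_{S^+\times\{0\}}+c$ is carried by $\Lambda$ onto confinement of $M$ in a finite vertical neighborhood of $\graph V_S$. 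Writing $M=\graph(v)$ (see below for graphicity), this yields $V_S\leq v\leq V_S+c'$, hence $\sup_{\R^n}|v-V_S|<+\infty$. Finally, for arbitrary $k>0$, I set $\lambda:=\sqrt{k}$ and $u_k(x):=\lambda^{-1}v(\lambda x)$; the principal curvatures of $\graph(u_k)$ are $\lambda$ times those of $\graph(v)$, so $H_2[u_k]=\lambda^2=k$, and the $1$-homogeneity of $V_S$ gives $u_k(x)-V_S(x)=\lambda^{-1}(v(\lambda x)-V_S(\lambda x))$, which is bounded.

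The main obstacle I expect is showing that the Lorentz-transformed hypersurface $M$ is a graph over $\R^n$: a generic boost can a priori tilt a spacelike graph out of graph form. The control comes from the two-sided slab estimate inherited from the semitrough, which traps $M$ in a finite vertical neighborhood of the $1$-Lipschitz cone $\graph V_S$. Combining this confinement with the spacelikeness of $M$ (no two points of $M$ lie on the same vertical line, by the achronality of a complete spacelike hypersurface in Minkowski space) and its completeness (every vertical line meets $M$, by a connectedness/sweeping argument starting from the undeformed case) yields $M=\graph(v)$ for some $v:\R^n\to\R$, completing the construction.
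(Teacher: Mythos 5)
Your proposal is correct and follows essentially the same route as the paper: reduce to the standard arc $S^+\times\{0\}$ via a Lorentz transformation (which preserves $H_2$ and acts conformally on the lightlike directions), lift the Hano--Nomizu semitrough trivially to $\R^n$ so that $H_2=\kappa_1\kappa_2$ equals its Gauss curvature, and rescale to reach the value $k$. The only difference is that you transport the standard solution forward by $\Lambda$ rather than normalizing $S$ first, and you explicitly justify that the Lorentz image remains an entire graph via the slab confinement and achronality --- a point the paper leaves implicit.
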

\begin{proof}
Recall that a Lorentz transformation preserves $H_2,$ and acts as a conformal transformation of $S^{n-1}$ on the sets of lightlike directions at infinity ($S^{n-1}$ is identified with the projective lightcone). Thus, applying a Lorentz transformation, we may suppose that $S$ is given by (\ref{definition S0}). The function $u$ defined by
$$u(x_1,x_2,x_3,\ldots,x_n)=\frac{1}{\sqrt{k}}\tilde u(\sqrt{k}(x_1,x_2)),$$
where $\tilde u$ is the standard semitrough defined above, satisfies the required properties.
\end{proof}
\subsection{The barriers}
Let $h$ be a positive constant and $F$ be a closed subset of $S^{n-1}.$ From \cite{T} p233, we know that there exists a smooth spacelike function $\ol u:\R^n\rightarrow\R$ whose graph has constant mean curvature $H_1=h$ and which is such that
\begin{equation}\label{bounds upper barrier}
V_F\leq \ol u\leq V_F+\frac{n}{h}\mbox{ on }\R^n.
\end{equation}
The function $\ol u$ satisfies the further properties: 
\begin{equation}\label{limsup upper barrier}
\limsup_{|x|\rightarrow+\infty}\ \ol u(x)-|x|\leq 0,
\end{equation}
and, for all $\xi\in F,$
\begin{equation}\label{asymptotic upper barrier along F}
\lim_{r\rightarrow+\infty}\ol u(r\xi)-r=0.
\end{equation}
For these last properties, see the upper barrier $z_2$ used in \cite{T} p233.
\begin{lemma} Let $h$ and $k$ be two positive constants such that $h<2\sqrt{k}.$ We assume that $F$ is a union of closed arcs of circles on $S^{n-1},$ 
\begin{equation}\label{form F}
F=\cup_{i\in I}S_i.
\end{equation}
Denoting by $u_i$ the entire spacelike function of constant scalar curvature $k$ associated to $S_i$ by Lemma \ref{lemma semitrough segment} (and its proof), the function $\displaystyle{\ul u=\sup_{i\in I}u_i}$ satisfies
$$V_F<\ul u< \ol u\mbox{ on }\R^n.$$
\end{lemma}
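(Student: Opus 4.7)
My plan is to compare each $u_i$ with $\ol u$ via a maximum principle argument driven by the Mac-Laurin inequality, and to compare each $u_i$ with $V_{S_i}$ via strict convexity of the semitrough; the double inequality for $\ul u$ then follows by taking supremum over $i\in I$.

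Applying (\ref{mac laurin}) to the admissible function $u_i$ gives $H_1[u_i]\geq\sqrt{2n/(n-1)}\,\sqrt{k}$, which strictly exceeds $h=H_1[\ol u]$ under the hypothesis linking $h$ and $k$. From this I would deduce $u_i<\ol u$ on $\R^n$ as follows: if the continuous function $v_i:=u_i-\ol u$ attained a nonnegative maximum at a point $x_0\in\R^n$, then $Du_i(x_0)=D\ol u(x_0)$ and $D^2u_i(x_0)\leq D^2\ol u(x_0)$; the formula for $h^i_j$ stated in the introduction shows that the principal curvatures of the graph of $u_i$ at $x_0$ are pointwise $\leq$ those of $\ol u$, forcing $H_1[u_i](x_0)\leq h$, which contradicts the Mac-Laurin bound. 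The main obstacle in this step is ruling out the supremum being approached only at infinity: I would use (\ref{asymptotic semitrough}), which transfers via Lemma \ref{lemma semitrough segment} and its proof to $u_i(x)-V_{S_i}(x)\to 0$ as the two nontrivial coordinates of $x$ go to infinity, together with $V_{S_i}\leq V_F\leq\ol u$, to conclude $\limsup_{|x|\to\infty}(u_i-\ol u)\leq 0$. Any nonnegative value of $\sup v_i$ is therefore attained at an interior point, which produces the contradiction; hence $u_i<\ol u$ strictly, and $\ul u=\sup_{i\in I}u_i\leq\ol u$.

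For the lower inequality $V_F<\ul u$, I would fix $x_0\in\R^n$ and use compactness of $F$: there exists $\lambda^\ast\in F$ attaining $V_F(x_0)=\langle x_0,\lambda^\ast\rangle$, and writing $\lambda^\ast\in S_{i^\ast}$ for some $i^\ast\in I$ yields $V_F(x_0)=V_{S_{i^\ast}}(x_0)$. The strict inequality $u_{i^\ast}>V_{S_{i^\ast}}$---which follows from the fact that the semitrough is a smooth spacelike strictly convex hypersurface lying above the piecewise-linear graph of $V_{S^+}$ and asymptotic to it---then gives $\ul u(x_0)\geq u_{i^\ast}(x_0)>V_F(x_0)$.

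To sharpen $\ul u\leq\ol u$ to a strict inequality at each $x\in\R^n$, I would take a sequence $i_k$ with $u_{i_k}(x)\to\ul u(x)$ and invoke the interior $C^2$ estimates for the scalar curvature equation cited in the introduction, together with the uniform local bounds $V_{S_{i_k}}\leq u_{i_k}\leq\ol u$ on compact subsets of $\R^n$, to extract a $C^2_{\mathrm{loc}}$-limit $u_\infty$ satisfying $H_2[u_\infty]=k$, $u_\infty\leq\ol u$, and $u_\infty(x)=\ul u(x)$. If equality $\ul u(x)=\ol u(x)$ held, then $u_\infty-\ol u$ would attain its maximum $0$ at the interior point $x$, reproducing the Mac-Laurin contradiction of the previous paragraph; hence $\ul u(x)<\ol u(x)$.
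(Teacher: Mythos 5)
There is a genuine gap in your key comparison $u_i\leq\ol u$, and it occurs at two places. First, the curvature bound: applying the Mac-Laurin inequality (\ref{mac laurin}) to $u_i$ only yields $H_1[u_i]\geq\sqrt{2n/(n-1)}\,\sqrt{k}$, and since $\sqrt{2n/(n-1)}<2$ for $n\geq 3$, this does \emph{not} exceed $h$ under the lemma's hypothesis $h<2\sqrt{k}$ (take, e.g., $n=3$ and $h=1.9\sqrt{k}$). The inequality actually needed, $H_1[u_i]\geq 2\sqrt{k}$, comes from the special structure of $u_i$: after a Lorentz transformation it is a cylinder over the two-dimensional semitrough of Gauss curvature $k$, so it has exactly two nonzero principal curvatures whose product is $k$, and the arithmetic--geometric mean inequality applied to those two curvatures gives the factor $2$; your appeal to (\ref{mac laurin}) discards this information. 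Second, the behaviour at infinity: the claim $\limsup_{|x|\rightarrow+\infty}(u_i-\ol u)\leq 0$ on $\R^n$ is not justified. Along a sequence $x^{(m)}$ with $|x^{(m)}|\rightarrow\infty$ but with the two ``active'' coordinates $(x_1^{(m)},x_2^{(m)})$ of the cylinder bounded, $u_i(x^{(m)})$ stays a fixed positive distance above $V_{S_i}(x^{(m)})$, while all you know about $\ol u$ is $\ol u\geq V_F\geq V_{S_i}$; hence $u_i-\ol u$ can have a positive $\limsup$ at infinity and your interior maximum need not exist.

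The paper circumvents both problems by restricting to the two-dimensional slices $x'={x'}_0$: there $\tilde u_i$ is the semitrough itself, so $H_1[\tilde u_i]\geq 2\sqrt{k}$ by AM--GM on its Gauss curvature; the restriction $\tilde{\ol u}$ satisfies $H_1[\tilde{\ol u}]\leq H_1[\ol u]=h$ by the convexity Lemma \ref{comparison mean curvature} of the appendix; and on the slice, $|(x_1,x_2)|\rightarrow\infty$ does force $\tilde u_i-V_{S^+}\rightarrow 0$ with $V_{S^+}\leq\tilde{\ol u}$, so the set $\{\tilde u_i>\tilde{\ol u}+\epsilon\}$ is bounded and the maximum principle applies there. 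Your final compactness argument for the strict inequality $\ul u<\ol u$ inherits the same defect (the limit $u_\infty$ is only known to satisfy $H_2[u_\infty]=k$, so again only Mac-Laurin is available, and the applicability of the cited interior estimates to the family $u_{i_k}$ is unverified); the paper instead dilates $\ol u$ to $\ol u_\lambda$ with $H_1[\ol u_\lambda]=\lambda h$ still below $2\sqrt{k}$ and concludes from $\ul u\leq\ol u_\lambda<\ol u$. The lower bound $V_F<\ul u$ in your write-up is fine and matches the paper's argument.
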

\begin{remark}\label{examples F}
The closure of an open subset $U$ of $S^{n-1}$ with $C^1$ boundary is of the form (\ref{form F}). More generally, if $U$ satisfies an interior cone condition at each boundary-point (i.e. all $\xi\in\partial U$ is a vertex of a (geodesic) cone $\subset\ol U$), $\ol U$ is of the form (\ref{form F}). Of course, the set $F$ in (\ref{form F}) might be much more complicated (e.g. without interior point). 
\end{remark}
\begin{proof}
We first prove that $V_F<\ul u$ on $\R^n.$ Since $u_i>V_{S_i}$ for all $i\in I,$ it is sufficient to prove that, for all $x\in\R^n,$ $V_F(x)=V_{S_i}(x)$ for some index $i\in I.$ Since these functions are homogeneous of degree one, we may suppose that $x\in S^{n-1}\subset\R^n.$ By (\ref{formula VF}), this amounts to prove that $d_S(x,F)=d_S(x,S_i)$ for some index $i\in I,$ where $d_S$ is the natural distance on $S^{n-1}.$ Let $x_0\in F$ be such that $d_S(x,F)=d_S(x,x_0),$ and $i\in I$ be such that $x_0$ belongs to $S_i.$ Since $S_i\subset F,$ we have $d_S(x,F)\leq d_S(x,S_i),$ and since $x_0\in S_i$ we have $d_S(x,x_0)\geq d_S(x,S_i)$ and thus $d_S(x,F)\geq d(x,S_i).$ Thus $d_S(x,F)=d_S(x,S_i),$ and the result follows. 

We now prove that $\ul u\leq \ol u.$ We fix $i\in I$ and we prove that $u_i\leq\ol u:$ applying a Lorentz transformation, we may assume that 
$$S_i=\{(x_1,x_2,0,\ldots,0)\in S^{n-1}:\ x_1\geq 0\}.$$ 
Let ${x'}_0\in\R^{n-2},$ and set
$$\tilde u_i(x_1,x_2):=u_i(x_1,x_2,{x'}_0)\mbox{ and }\tilde{\ol u}(x_1,x_2):=\ol u(x_1,x_2,{x'}_0).$$
Recalling the proof of Lemma \ref{lemma semitrough segment}, we observe that $\tilde u_i$ is the (scaled) semi\-trough defined Section \ref{section definition semitrough}. From Lemma \ref{comparison mean curvature} we get $H_1[\tilde{\ol u}]\leq h.$ Since $\tilde u_i$ is the semitrough with Gauss curvature equal to $k,$ from the geometric-arithmetic means inequality we get $H_1[\tilde u_i]\geq 2\sqrt{k}.$ Thus  $H_1[\tilde{\ol u}]\leq H_1[\tilde u_i].$ We suppose by contradiction that there exists $x_0=(x_1^0,x_2^0)$ such that $\tilde u_i(x_0)>\tilde{\ol u}(x_0),$ and we consider $\epsilon>0$ such that
$$\tilde u_i(x_0)>\tilde{\ol u}(x_0)+\epsilon.$$
Since $S_i$ belongs to $F,$ we have $V_F(.,x'_0)\geq V_{S_i}(.,x'_0)=V_{S^+},$ and we conclude from (\ref{asymptotic semitrough}) and (\ref{bounds upper barrier}) that
$$\liminf_{|x|\rightarrow+\infty}(\tilde{\ol u}+\epsilon)-\tilde u_i\geq \epsilon.$$
Thus the non-empty open set
$$U=\{(x_1,x_2)\in\R^2:\ \tilde u_i(x_1,x_2)>\tilde{\ol u}(x_1,x_2)+\epsilon\}$$
is bounded. Since $\tilde u_i=\tilde{\ol u}+\epsilon$ on $\partial U$ and $H_1[\tilde u_i]\geq H_1[\tilde{\ol u}+\epsilon]$ in $U,$ we get a contradiction with the maximum principle. The claim is proved.

We finally prove the strict inequality $\ul u<\ol u:$ we set $\displaystyle{\ol u_{\lambda}(x):=\frac{1}{\lambda}\ol u(\lambda x),}$ with $\lambda>1$ such that $\lambda h<2\sqrt{k}.$ Since $H_1[\ol u_{\lambda}]=\lambda h$ and $\ol u_{\lambda}>V_F,$ the arguments given in the paragraph above (with $\ol u_{\lambda}$ instead of $\ol u$) show that $\ul u\leq \ol u_{\lambda}.$ Since $\ol u_{\lambda}<\ol u,$ we obtain the result.
\end{proof}
\noindent The useful properties of the barriers are gathered in Proposition \ref{proposition intro barriers}. 
\begin{remark}\label{invariance barriers}
By construction, it is clear that if the set $F$ is contained in some affine subspace, in
$$\{(x',x'')\in\R^n=\R^k\times\R^{n-k}:\  x''=0\}$$
say, we may assume that the barriers $\ul u,\ol u$ satisfy: for all $(x',x'')\in\R^k\times\R^{n-k},$
$$\ul u(x',x'')=\ul u(x',0)\mbox{ and }\ol u(x',x'')=\ol u(x',0).$$
\end{remark}
\subsection{Construction of two auxiliary functions}
This section is devoted to the construction of auxiliary functions which are crucial for the local $C^1$ and $C^2$ estimates. The functions $\ul u,\ol u$ are the barriers constructed above. The following lemma is needed for the local $C^1$ estimate. 
\begin{lemma}\label{construction psi}
Let $K$ be a compact subset of $\R^n.$ There exists a smooth spacelike function $\psi:\R^n\rightarrow\R$ such that 
$$\psi<\ul u\mbox{ on }K\mbox{ and }\psi\geq \ol u\mbox{ near infinity.}$$
\end{lemma}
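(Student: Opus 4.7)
The plan is to construct $\psi$ as a perturbation of the upper barrier $\ol u$ by a smooth, compactly supported bump. First set $M:=\sup_K(\ol u-\ul u)$; this supremum is finite because $\ol u$ is continuous and $\ul u$ is lower semicontinuous (as a supremum of continuous functions $u_i$), so $\ol u-\ul u$ is upper semicontinuous on the compact set $K$. Moreover by (\ref{inequality barriers}) one has $M\leq c$, a bound independent of $K$, which will play a key role. Then choose a compact neighborhood $K'\supset K$ and a smooth function $\phi:\R^n\to\R$ with $\phi\equiv M+1$ on $K$, $\phi\equiv 0$ outside $K'$, $0\leq\phi\leq M+1$, and with small gradient (to be quantified below). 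Define $\psi:=\ol u-\phi$.

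The two pointwise conditions of the lemma are immediate from this construction. Outside $K'$ one has $\psi=\ol u\geq\ol u$, so $\psi\geq\ol u$ near infinity. On $K$,
$$\psi\ =\ \ol u-(M+1)\ \leq\ \ol u-(\ol u-\ul u)-1\ =\ \ul u-1\ <\ \ul u.$$
Smoothness of $\psi$ is clear.

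The heart of the proof is to verify that $\psi$ is spacelike on all of $\R^n$. Outside $K'$ this is automatic since $\psi=\ol u$. On the compact set $K'$, $|D\ol u|$ is continuous and strictly less than $1$, so attains a maximum $1-\epsilon_0$ with some $\epsilon_0>0$; then $|D\psi|\leq|D\ol u|+|D\phi|\leq 1-\epsilon_0+|D\phi|$, and it suffices to arrange $|D\phi|<\epsilon_0$ on $K'$. A standard cutoff construction produces $\phi$ with $|D\phi|$ of order $(M+1)/\dist(K,\partial K')$, so the spacelike requirement reduces to arranging $\dist(K,\partial K')\cdot\epsilon_0(K')$ to be of order at least $M+1$. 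The hard part is precisely this balance: enlarging $K'$ increases the transition width but shrinks $\epsilon_0$, because $|D\ol u|\to 1$ at infinity. The uniform bound $M\leq c$ from (\ref{inequality barriers}) is essential here, since it prevents the required bump height from growing with the diameter of $K$; combined with the explicit asymptotic behavior of the Treibergs upper barrier $\ol u$ (in particular the rate at which $1-|D\ol u|$ decays at infinity), it makes the balance achievable by an appropriate choice of $K'$, and completes the construction.
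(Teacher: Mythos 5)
There is a genuine gap, and it sits exactly where you place the "hard part": the balance $\dist(K,\partial K')\cdot\epsilon_0(K')\gtrsim M+1$ is not achievable by any choice of $K'$, because this product is uniformly bounded above independently of $K'$ (and can even be forced to be arbitrarily small by the choice of $K$). Indeed, fix $x_0\in K$ and $\xi\in F$ realizing $V_F(x_0)=\langle x_0,\xi\rangle$. Since $\langle D\ol u,\xi\rangle\leq|D\ol u|$ and $\ol u(x_0+r\xi)\geq V_F(x_0+r\xi)\geq\langle x_0,\xi\rangle+r,$ one has for every $r>0$
$$\int_0^r\bigl(1-|D\ol u(x_0+s\xi)|\bigr)\,ds\ \leq\ r-\bigl(\ol u(x_0+r\xi)-\ol u(x_0)\bigr)\ \leq\ \ol u(x_0)-V_F(x_0)\ \leq\ c.$$
On the portion of this ray crossing $K'\setminus K$, of length at least $d:=\dist(K,\partial K'),$ the minimum of $1-|D\ol u|$ is at most its average, hence $d\cdot\epsilon_0(K')\leq\ol u(x_0)-V_F(x_0)\leq c$ for every $K'$. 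Your construction requires $d\cdot\epsilon_0(K')\geq M+1\geq 1$; taking $x_0=\rho\xi$ with $\xi\in F$ and $\rho$ large, (\ref{asymptotic upper barrier along F}) gives $\ol u(x_0)-V_F(x_0)\rightarrow 0$, so the required inequality fails no matter how $K'$ is chosen. The same computation rules out any refinement within this strategy: a bump $\phi$ supported in $K'$ with $\phi(x_0)\geq\ol u(x_0)-\ul u(x_0)$ and $|D\ol u-D\phi|<1$ must satisfy $\int|D\phi|\leq\int(1-|D\ol u|)$ along such a ray (there $D\phi$ and $D\ol u$ are essentially anti-aligned), leaving only the margin $\ul u(x_0)-V_F(x_0)$, which is positive but not bounded away from the constraint; $|D\phi|$ would have to track $1-|D\ol u|$ along every $F$-ray, so that $\psi=\ol u-\phi$ is forced to be asymptotic to a cone function anyway. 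In short, a function that lies below $\ul u$ on $K$ and above $\ol u$ at infinity cannot be obtained from $\ol u$ by a compactly supported perturbation of controlled gradient; the crossing has to be produced by changing the asymptotic cone.

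That is what the paper does. It sets $\delta_0:=\inf_{B_R}(\ul u-V_F)>0$ and replaces $F$ by its $\epsilon$-neighborhood $F_{\epsilon}$ with $\epsilon<\delta_0/(8R)$: the cone function $V_{F_{\epsilon}}+\epsilon$ is then within $\delta_0/4$ of $V_F$ on $K$ (hence well below $\ul u$ there), while near infinity it dominates $\ol u$ — along $F_{\epsilon}$-directions because $V_{F_{\epsilon}}(r\xi)+\epsilon=r+\epsilon$ beats $\ol u(r\xi)\leq r+\epsilon/2$ by (\ref{limsup upper barrier}), and off $F_{\epsilon}$ because $V_{F_{\epsilon}}-V_F\geq r\,m_{\epsilon}$ grows linearly whereas $\ol u-V_F\leq c$ by (\ref{inequality barriers}). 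The smooth spacelike $\psi$ is then obtained not by a cutoff but as a rescaled Treibergs constant-mean-curvature graph asymptotic to $V_{F_{\epsilon}}$. You would need to replace your bump construction by an argument of this type.
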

\noindent For the proof, we will need the following lemma:
\begin{lemma}
Let $F$ be a closed subset of $S^{n-1}.$ Let $\epsilon>0$ and set
$$F_{\epsilon}:=\{\xi\in S^{n-1}:\ d_S(\xi,F)\leq\epsilon\}.$$ 
The function $V_{F_{\epsilon}}-V_F$ has the following properties: 
\begin{equation}\label{property VF_epsilon 1}
\sup_{\xi\in S^{n-1}}\left|V_{F_{\epsilon}}(\xi)-V_F(\xi)\right|\leq\epsilon,
\end{equation}
and
\begin{equation}\label{property VF_epsilon 2}
\inf_{\xi\in S^{n-1}\backslash F_{\epsilon}}V_{F_{\epsilon}}(\xi)-V_F(\xi)\geq m_\epsilon,
\end{equation}
for some positive constant $m_{\epsilon}.$
\end{lemma}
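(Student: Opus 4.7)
The plan is to reduce both statements to one-variable trigonometric estimates using the identity $V_F(\xi)=\cos(d_S(\xi,F))$ for $\xi\in S^{n-1}$ recorded in (\ref{formula VF}). The crucial intermediate fact I would establish first is the tubular-neighborhood formula
\[
d_S(\xi,F_{\epsilon})=\max\bigl(0,\,d_S(\xi,F)-\epsilon\bigr)\qquad\text{for all }\xi\in S^{n-1}.
\]
This follows from two triangle inequalities in the geodesic space $S^{n-1}$: for any $\eta\in F_{\epsilon}$, pick $\eta'\in F$ with $d_S(\eta,\eta')\leq\epsilon$, giving $d_S(\xi,F)\leq d_S(\xi,\eta)+\epsilon$, hence $d_S(\xi,\eta)\geq d_S(\xi,F)-\epsilon$; conversely, following the minimising geodesic from $\xi$ to a nearest point $\eta_0\in F$ and stopping at spherical distance $\epsilon$ from $\eta_0$ produces a point of $F_{\epsilon}$ realising the bound. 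If $\xi\in F_{\epsilon}$ both sides are zero.

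Given this, (\ref{property VF_epsilon 1}) is immediate after a case split. If $\xi\in F_{\epsilon}$ then $V_{F_{\epsilon}}(\xi)=1$ and $V_F(\xi)=\cos(d_S(\xi,F))\geq\cos\epsilon$, so the elementary inequality $1-\cos t\leq t$ for $t\geq 0$ gives $|V_{F_{\epsilon}}(\xi)-V_F(\xi)|\leq\epsilon$. If $\xi\notin F_{\epsilon}$, the difference becomes $\cos(d_S(\xi,F)-\epsilon)-\cos(d_S(\xi,F))$, and the mean value theorem together with $|\sin|\leq 1$ bounds it by $\epsilon$.

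For (\ref{property VF_epsilon 2}), set $t:=d_S(\xi,F)>\epsilon$ on $S^{n-1}\setminus F_{\epsilon}$ and apply the sum-to-product identity
\[
\cos(t-\epsilon)-\cos t \;=\; 2\,\sin\!\Bigl(t-\tfrac{\epsilon}{2}\Bigr)\,\sin\!\Bigl(\tfrac{\epsilon}{2}\Bigr).
\]
Since $t\in(\epsilon,\pi]$, the argument $t-\epsilon/2$ lies in $(\epsilon/2,\pi-\epsilon/2]$, on which $\sin$ is positive and attains its minimum $\sin(\epsilon/2)$ at the two endpoints. Hence
\[
V_{F_{\epsilon}}(\xi)-V_F(\xi)\;\geq\; 2\sin^2(\epsilon/2)\;=:\;m_\epsilon\;>\;0
\]
uniformly for $\xi\in S^{n-1}\setminus F_{\epsilon}$. (If $F_{\epsilon}=S^{n-1}$ the statement is vacuous.)

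The only non-routine step is the tubular-neighborhood identity for $d_S(\cdot,F_{\epsilon})$; once it is in hand, both estimates reduce to elementary trigonometry on $[0,\pi]$. I would be careful to allow $t=d_S(\xi,F)>\pi/2$, where $V_F(\xi)$ becomes negative: the identity $V_F=\cos d_S$ is still valid there, since $\sup_{\lambda\in F}\langle\xi,\lambda\rangle=\sup_{\lambda\in F}\cos(d_S(\xi,\lambda))=\cos(d_S(\xi,F))$ by monotonicity of $\cos$ on $[0,\pi]$, so the trigonometric estimates carry across the whole range.
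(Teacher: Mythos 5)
Your proof is correct and follows essentially the same route as the paper: both reduce to the identity $V_F=\cos(d_S(\cdot,F))$ together with the relation $d_S(\xi,F)=d_S(\xi,F_{\epsilon})+\epsilon$ off $F_{\epsilon}$ (which you prove and the paper merely asserts), and then conclude by elementary trigonometry. Your sum-to-product bound $m_\epsilon=2\sin^2(\epsilon/2)$ coincides with the paper's $\inf_{\alpha\in[0,\pi-\epsilon]}\int_{\alpha}^{\alpha+\epsilon}\sin(t)\,dt$, so the two arguments are interchangeable.
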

\begin{proof}
By (\ref{formula VF}), for all $\xi\in S^{n-1},$
\begin{equation}\label{VF_epsilon-V_F}
V_{F_{\epsilon}}(\xi)-V_F(\xi)=\cos\left(d_S(\xi,F_{\epsilon})\right)-\cos\left(d_S(\xi,F)\right).
\end{equation}

We first prove (\ref{property VF_epsilon 1}): we observe that, for all $\xi\in S^{n-1},$
$$\left|d_S(\xi,F_{\epsilon})-d_S(\xi,F)\right|\leq \varepsilon.$$
Since $\left|\cos(\alpha)-\cos(\beta)\right|\leq|\alpha-\beta|$ for all $\alpha,\beta\in\R,$ we obtain  (\ref{property VF_epsilon 1}).

We now prove (\ref{property VF_epsilon 2}): we suppose that $\xi\notin F_{\epsilon};$ since $F\subset F_{\epsilon},$ we have
\begin{equation}\label{distance F F_epsilon}
d_S(\xi,F)=d_S(\xi,F_{\epsilon})+\epsilon.
\end{equation}
This implies in particular that
$$d_S(\xi,F_{\epsilon})\in[0,\pi-\epsilon].$$
Denoting $\alpha=d_S(\xi,F_{\epsilon}),$ we obtain from (\ref{VF_epsilon-V_F}) and (\ref{distance F F_epsilon}) that
$$V_{F_{\epsilon}}(\xi)-V_F(\xi)=\cos\alpha-\cos(\alpha+\epsilon)\geq m_{\epsilon},$$
where $\displaystyle{m_{\epsilon}=\inf_{\alpha\in[0,\pi-\epsilon]}\int_{\alpha}^{\alpha+\epsilon}\sin(t)dt}$
is positive, and we obtain (\ref{property VF_epsilon 2}).
\end{proof}
\textit{Proof of Lemma \ref{construction psi}.}
Let $K$ be a compact subset of $\R^n,$ and $R\geq 1$ be such that $K\subset \ol B_R$ (here and below $B_R$ stands for the open ball of radius $R$ in $\R^n,$ centered at the origin). We fix $\delta_0>0$ such that 
\begin{equation}\label{def delta_0}
\inf_{B_R}(\ul u-V_F)\geq\delta_0.
\end{equation}
Let $\epsilon>0$ and $F_{\epsilon}:=\{\xi\in S^{n-1}|\ d(\xi,F)\leq\epsilon\}.$ 
From (\ref{property VF_epsilon 1}) we get  
\begin{equation}
\sup_K\left|V_{F_{\epsilon}}-V_F\right|<\frac{\delta_0}{8}
\end{equation}
if $\epsilon<\frac{\delta_0}{8R}.$ Thus, if $\epsilon<\frac{\delta_0}{8R},$
\begin{equation}\label{ineq VF epsilon VF}
\sup_K\left|(V_{F_{\epsilon}}+\epsilon)-V_F\right|<\frac{\delta_0}{4}.
\end{equation}
Let $\psi$ be a spacelike function such that $\psi>V_{F_{\epsilon}}+\epsilon$ and
\begin{equation}\label{ineq VF epsilon psi}
\sup_K\left|\psi-(V_{F_{\epsilon}}+\epsilon)\right|<\frac{\delta_0}{4}.
\end{equation}
We may construct $\psi$ as follows: we first consider a spacelike function $v$ whose graph has constant mean curvature one and which is such that
$$V_{F_{\epsilon}}<v<V_{F_{\epsilon}}+c,$$
given by \cite{T} Theorem 2; here $c$ is a positive constant. We then define
$$\psi(x):=\frac{1}{\lambda}v(\lambda x)+\epsilon,$$
where $\lambda$ is a positive parameter. We have
\begin{eqnarray*}
\sup_{x\in\R^n}\left|\psi(x)-(V_{F_{\epsilon}}+\epsilon)(x)\right|&=&\sup_{x\in\R^n}\left|\frac{1}{\lambda}v(\lambda x)-V_{F_{\epsilon}}(x)\right|\\&=&\frac{1}{\lambda}\sup_{x\in\R^n}\left|v(\lambda x)-V_{F_{\epsilon}}(\lambda x)\right|\leq\frac{c}{\lambda}<\frac{\delta_0}{4}
\end{eqnarray*}
if $\lambda$ is chosen sufficiently large. From (\ref{ineq VF epsilon VF}) and (\ref{ineq VF epsilon psi}) we get 
\begin{equation}
\sup_K|\psi-V_F|<\frac{\delta_0}{2},
\end{equation}
and, from (\ref{def delta_0}), on $K,$
\begin{eqnarray*}
\ul u-\psi&\geq&\inf_K(\ul u-V_F)-\sup_K|\psi-V_F|\geq\frac{\delta_0}{2}.
\end{eqnarray*}
We now prove that there exists $r_\epsilon>0$ such that
\begin{equation}\label{existence r_epsilon}
\inf_{\R^n\backslash B_{r_\epsilon}}\left(V_{F_{\epsilon}}+\epsilon-\ol u\right)\geq\frac{\epsilon}{2}.
\end{equation}
Since $\psi>V_{F_{\epsilon}}+\epsilon,$ this will prove the last claim of the Lemma. We consider $x=r\xi\in\R^n\backslash\{0\},$ with $r>0$ and $\xi\in S^{n-1}.$ We first suppose that $\xi\in F_{\epsilon}.$ By (\ref{limsup upper barrier}) there exists $r_1,$ independent of $\xi,$ such that $\ol u(r\xi)\leq r+\frac{\epsilon}{2}$ for all $r\geq r_1.$ Thus, if $r\geq r_1,$
$$(V_{F_{\epsilon}}+\epsilon-\ol u)(r\xi)\geq(r+\epsilon)-\left(r+\frac{\epsilon}{2}\right)\geq\frac{\epsilon}{2}.$$
If we now suppose that $\xi\notin F_{\epsilon},$ we have
\begin{eqnarray*}
(V_{F_{\epsilon}}+\epsilon-\ol u)(r\xi)&\geq& \left(V_{F_{\epsilon}}-V_F\right)(r\xi)+\left(V_F-\ol u\right)(r\xi)\\
&\geq&\left(V_{F_{\epsilon}}-V_F\right)(r\xi)-c,
\end{eqnarray*}
where the constant $c$ is given by (\ref{inequality barriers}). By (\ref{property VF_epsilon 2}), $\left(V_{F_{\epsilon}}-V_F\right)(r\xi)\geq rm_{\epsilon}$ where the constant $m_{\epsilon}$ is positive. Thus, there exists $r_2$ such that if $r\geq r_2$ and $\xi\notin F_{\epsilon},$ we have
$$(V_{F_{\epsilon}}+\epsilon-\ol u)(r\xi)\geq\frac{\epsilon}{2}.$$
Taking $r_{\epsilon}=\max(r_1,r_2)$ we obtain (\ref{existence r_epsilon}).
\begin{flushright}
$\Box$
\end{flushright}
The following lemma is needed for the local $C^2$ estimate. 
\begin{lemma}\label{construction phi}
We suppose that $F$ is not included in any affine hyperplane of $\R^n,$ and we consider $K$ a compact subset of $\R^n.$ There exist a ball $B_R$ which contains $K$ and a smooth and strictly convex function $\Phi:\ol B_R\rightarrow\R$ such that
$$\Phi>\ol u\mbox{ on }K\mbox{ and }\Phi\leq\ul u\mbox{ on }\partial B_R.$$
\end{lemma}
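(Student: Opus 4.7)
The plan is to leverage the hypothesis that $F$ is not contained in any affine hyperplane of $\R^n$—equivalently, the convex hull of $F$ has non-empty interior in $\R^n$—to produce a linear functional strictly below $V_F$ with a uniform directional gap. Concretely, I would pick any $\eta$ in the interior of the convex hull of $F$ together with a radius $\delta>0$ such that the ball $B_\delta(\eta)$ is contained in that convex hull. By the definition of $V_F$ as a support function and $1$-homogeneity, this yields
$$V_F(x)-\langle x,\eta\rangle\geq\delta\,|x|\qquad\mbox{for all }x\in\R^n.$$

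Given this gap, the natural candidate is the tilted paraboloid
$$\Phi(x):=\langle x,\eta\rangle+\epsilon|x|^2+C,$$
with $\epsilon>0$ small, $C>0$, and $R$ to be fixed at the end. $\Phi$ is automatically smooth and strictly convex (its Hessian is $2\epsilon I$), so only the two pointwise inequalities remain to be checked. On $\partial B_R$ the linear gap $\delta R$ absorbs the quadratic excess $\epsilon R^2+C$: provided $\epsilon R^2+C\leq\delta R$ one obtains $\Phi\leq V_F<\ul u$ by (\ref{inequality barriers}). On $K$, using $\ol u\leq V_F+n/h\leq|x|+n/h$ from (\ref{bounds upper barrier}) together with $V_F(x)\leq|x|$ and the crude estimate $\langle x,\eta\rangle\geq-|\eta|\,|x|$, the difference $\Phi-\ol u$ is bounded below by $C-(1+|\eta|)R_K-n/h$, where $R_K:=\sup_{x\in K}|x|$; so $C>(1+|\eta|)R_K+n/h$ is enough.

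Both constraints on $C$ become compatible once $R$ is taken large enough: choosing $\epsilon=\delta/(2R)$ reduces the boundary condition to $C\leq\delta R/2$, which dominates the fixed lower bound on $C$ as soon as $R>2((1+|\eta|)R_K+n/h)/\delta$, and any intermediate value of $C$ then yields the desired $\Phi$. The main subtlety is recognizing that the non-hyperplane hypothesis on $F$ is exactly what supplies a direction $\eta$ with a uniform linear gap $V_F-\langle\cdot,\eta\rangle\geq\delta|\cdot|$; once that observation is in place, the remainder of the argument is elementary algebra and requires neither regularity of $V_F$ nor finer information about $\ul u,\ol u$ beyond the barrier bounds from Proposition \ref{proposition intro barriers} and (\ref{bounds upper barrier}).
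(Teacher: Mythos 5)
Your proof is correct, but it takes a genuinely different route from the paper's. The paper derives the lemma from the \emph{strict convexity of the upper barrier} $\ol u$: the hypothesis that $F$ is not contained in an affine hyperplane is fed into the Splitting Theorem of Choi--Treibergs to conclude that $\ol u$ is strictly convex, hence $\ol u(x)\to+\infty$ as $|x|\to\infty$ (after normalizing $\ol u(0)=0$, $d\ol u_0=0$); the sandwich $\ul u\geq\ol u-c$ then forces $\ul u\to+\infty$ as well, and a \emph{centered} paraboloid $\Phi_0+|x|^2/R^2$ does the job. You instead exploit the hypothesis directly at the level of the support function: nondegeneracy of $F$ gives an interior point $\eta$ of $\mathrm{conv}(F)$ with a uniform linear gap $V_F(x)-\langle x,\eta\rangle\geq\delta|x|$ (correct, since $V_F$ is the support function of $\mathrm{conv}(F)$ and $\ol{B_\delta(\eta)}\subset\ol{\mathrm{conv}(F)}$), and a \emph{tilted} paraboloid then fits between the barriers using only (\ref{inequality barriers}) and (\ref{bounds upper barrier}). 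Your bookkeeping for the constants $\epsilon$, $C$, $R$ is consistent, and $|\eta|\leq 1$ since $\eta\in\mathrm{conv}(F)\subset\ol{B_1}$, so the lower bound on $C$ is uniform. What each approach buys: yours is more elementary and self-contained, avoiding the Splitting Theorem entirely and requiring no information about $\ul u,\ol u$ beyond the two-sided comparison with $V_F$; the paper's argument reuses the strict convexity of $\ol u$, a fact it needs anyway for the solvability of the Dirichlet problems, so no extra machinery is introduced there either.
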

\begin{proof}
We first note that the upper barrier $\ol u$ is strictly convex: this follows from the Splitting Theorem \cite{CT}, Theorem 3.1, together with the assumption that $F$ is not included in any affine hyperplane of $\R^n.$ Applying a affine Lorentz transformation if necessary, we may suppose that $\ol u(0)=0$ and $d\ol u_{0}=0.$ Since $\ol u$ is strictly convex, we have
\begin{equation}\label{limit upper barrier}
\lim_{|x|\rightarrow+\infty}\ol u(x)=+\infty.
\end{equation}
We fix $R'$ sufficiently large such that $K\subset \ol B_{R'}.$ We set $\Phi_0:=\sup_{B_{R'}}\ol u+1.$ Recalling (\ref{inequality barriers}),  $\ul u\geq \ol u -c$ on $\R^n.$ We thus get from (\ref{limit upper barrier}) the existence of $R>R'$ such that
\begin{equation}
\inf_{\{x:\ |x|\geq R\}}\ul u(x)\geq \Phi_0+1.
\end{equation} 
We set, for all $x\in\R^n,$
\begin{equation}
\Phi(x):=\Phi_0+\frac{1}{{R}^2}|x|^2.
\end{equation}
The function $\Phi$ is strictly convex, $\Phi\geq\ol u+1\mbox{ on }B_{R'}$ and $\Phi\leq \ul u\mbox{ on }\partial B_{R}.$
\end{proof}
\section{The construction of an entire solution of the elliptic problem}\label{section entire solution elliptic}
We assume that $F,$ $\ul u$ and $\ol u$ are as in Proposition \ref{proposition intro barriers}. The barriers $\ul u,\ol u$ are constructed in the previous section.

We first suppose that $F$ is not included in any affine hyperplane of $\R^n.$ For any positive $R,$ we set $u_R$ for the admissible solution of
$$\left\{\begin{array}{rcl}H_2[u_R]&=&1\mbox{ in }B_R\\
u_R&=&\ol u\mbox{ on }\partial B_R.
\end{array}\right.$$
This Dirichlet problem is solvable since $\ol u$ is strictly convex (by the Splitting Principle \cite{CT}, Theorem 3.1); see \cite{Bay,U}. From the Mac-Laurin inequality (\ref{mac laurin}) we get $H_1[u_R]\geq\sqrt{\frac{2n}{n-1}}.$ Thus, the comparison principle for the operator $H_1$ implies that $\ol u\geq u_R.$ Since $\ul u$ is defined as a supremum of admissible functions with scalar curvature $H_2=1,$ we also have $u_R\geq\ul u.$ Thus $u_R$ lies between the barriers, for every $R.$ The following local uniform estimates hold: for any $R_0\geq 0,$ there exist $R_1=R_1(R_0)$ sufficiently large, $\theta\in(0,1),$ and $C\geq 0$ such that: for every $R\geq R_1,$
$$\sup_{B_{R_0}}|Du_R|\leq 1-\theta\mbox{ and }\sup_{B_{R_0}}|u_R|+\sup_{B_{R_0}}|D^2u_R|\leq C.$$
For the $C^1$ local estimate, we refer to \cite{Bay2}, Proposition 4.1. The auxiliary function $\psi$ needed for the estimate is given here by Lemma \ref{construction psi}. For the local $C^2$ estimate, we refer to \cite{Bay2}, Proposition 5.1.; here is needed the auxiliary function $\Phi$ given by Lemma \ref{construction phi}. The proofs remain unchanged.

Evans-Krylov interior second derivative H\"older estimate, and Schauder interior regularity theory imply locally uniform estimates of higher derivatives. A diagonal process then yields a subsequence $u_{R_k},$ $R_k\rightarrow+\infty,$ that locally converges to a smooth solution of (\ref{entire elliptic equation}). The properties (\ref{asymptotic u along F}) and (\ref{asymptotic u on Rn}) follow from the behavior at infinity of the barriers given by (\ref{inequality barriers}) and (\ref{limsup barrier intro}).

If $F$ is included in some affine hyperplane of $\R^n,$ applying a Lorentz transformation we may suppose that $F$ belongs to 
$$S^{k-1}\times\{0\}=\{(x_1,\ldots,x_n)\in S^{n-1}:\ x_{k+1}=\cdots=x_n=0\}$$
and that $F$ is not included in any affine hyperplane of $\R^k\times\{0\},$ where $k$ belongs to $\{1,\ldots,n-1\}.$ By Remark \ref{invariance barriers}, the restrictions $\ul u_{|\R^k},$ $\ol u_{|\R^k}$ are barriers for the scalar curvature operator $H_2$ on $\R^k,$ with $\ol u_{|\R^k}$ strictly convex, and are such that (\ref{inequality barriers})-(\ref{limsup barrier intro}) hold on $\R^k.$ Thus, there exists $\tilde u:\R^k\rightarrow\R$ such that $H_2[\tilde u]=1$ and $\ul u_{|\R^k}\leq \tilde u\leq\ol u_{|\R^k}.$ The function $u$ defined on $\R^n$ by
$$u(x_1,\ldots,x_n):=\tilde u(x_1,\ldots,x_k)$$
is an entire solution of (\ref{entire elliptic equation}) such that (\ref{asymptotic u along F}) and (\ref{asymptotic u on Rn}) hold.
\section{Notation and evolution equations}\label{section evolution equations}
\subsection{Notation}
Let $\Sigma_0$ be a spacelike hypersurface of $\R^{n,1},$ and let $X:\Sigma_0\times [0,+\infty)\rightarrow\R^{n,1}$ be a family of spacelike embeddings  of $\Sigma_0$ in $\R^{n,1}:$ for every $t\geq 0,$  $\Sigma_t:=X(\Sigma_0\times\{t\})$ is a spacelike hypersurface. We set $N$ for the future oriented unit normal field of $\Sigma_t.$ We denote by $(g_{ij})$ and $(h_{ij})$ the metric and the second fundamental form induced by the Minkowski metric on the embedded hypersurface $\Sigma_t.$ We will use the Einstein summation convention, and raise or lower indices with respect to the metric $(g_{ij}).$ The components of the curvature endomorphism are thus denoted by $h^i_j,$ and we will often write problem (\ref{entire parabolic equation}) in the equivalent form
\begin{equation}\label{parabolic Dirichlet problem X}
\left\{\begin{array}{rcl}
\dot X&=&(F((h^i_j)_{i,j})-\hat f(X,t))N\mbox{ in }\Sigma_0\times(0,+\infty)\\
X(.,0)&=&X_0\mbox{ on }\Sigma_0,
\end{array}\right.
\end{equation}
where $X_0$ is the canonical embedding of $\Sigma_0,$ $F(A)$ is the square root of the sum of the principal minors of order $2$ of the matrix $A,$ and $\hat f$ is a positive function on $\R^{n,1}\times[0,+\infty)$ (constant equal to one in (\ref{entire parabolic equation})). Let
$$F_i^j:=\frac{\partial F}{\partial h_j^i}\left((h^i_j)_{i,j}\right).$$
If $(h^i_j)_{i,j}$ is diagonal, so is $(F_i^j)_{i,j},$ and $F_i^i=\frac{1}{2F}\sigma_{1,i}$ for all $i,$ 
where 
$$\sigma_{1,i}=\sum_{k,k\neq i}\lambda_k.$$ Here and below we denote by $\lambda_1\geq\cdots\geq\lambda_n$ the principal curvatures of $\Sigma_t.$ $(F_i^j)_{i,j}$ defines a $(1,1)$ tensor on $\Sigma_t.$ Raising the index $i$ we also will use the symmetric tensor $(F^{ij})_{i,j}.$ Analogously we define
$$F^{ij,kl}:=g^{ii'}g^{kk'}\frac{\partial^2 F}{\partial h_j^{i'}\partial h_l^{k'}}\left((h^i_j)_{i,j}\right).$$
We say that $X$ solution of (\ref{parabolic Dirichlet problem X}) is admissible if, for every $t\geq 0,$  $\Sigma_t$ is an admissible hypersurface, which means that  ${H_1}({\Sigma_t})>0$ and ${H_2}({\Sigma_t})>0.$ Admissibility of a solution of (\ref{entire parabolic equation}) is defined similarly. We denote by $D$ the usual covariant derivative on $\R^{n,1}$ (or on $\R^n$), $\nabla$ the covariant derivative induced on $\Sigma_t,$ and use a semi-colon to denote the components of covariant derivatives on $\Sigma_t.$ Finally, the Minkowski metric on $\R^{n,1}$ is denoted by $\langle.,.\rangle,$ the Minkowski norm of spacelike vectors of $\R^{n,1}$ by $|.|,$ and  the usual euclidian norm by $|.|_{eucl}.$ 
\subsection{Evolution equations}
For a hypersurface moving according to 
$$\dot X=(F(h^i_j)-\hat f(X,t))N,$$
we have 
\begin{equation}\label{evolution gij}
\frac{d}{dt}g_{ij}=2(F-\hat f)h_{ij},
\end{equation}
\begin{equation}\label{evolution F-hat f}
\frac{d}{dt}(F-\hat{f})-F^{ij}(F-\hat f)_{ij}=-F^{ij}h_{ik}h^k_j(F-\hat f)-N^{\alpha}\hat f_{\alpha}(F-\hat f)-\hat f_t,
\end{equation}
\begin{equation}\label{evolution hij}
\begin{array}{lcl}
\frac{d}{dt}h_{ij}-F^{kl}h_{ij;kl}&=&F(h_i^ah_{aj})-h_{ij}F^{kl}(h_k^ah_{al})\\&&+F^{kl,pq}h_{kl;i}h_{pq;j}-\hat f_{ij}+(F-\hat f)h_i^kh_{kj},
\end{array}
\end{equation}
and, defining $u:=-\langle e_{n+1},X\rangle$ and  $\nu:=-\langle e_{n+1},N\rangle,$ 
\begin{equation}\label{evolution height u}
\dot u-F^{ij}u_{ij}=-\hat f\nu,
\end{equation}
and 
\begin{equation}\label{evolution nu time xn+1}
\dot \nu-F^{ij}\nu_{ij}=-\nu F^{ij}h_j^kh_{ki}+\hat f_jt^j,
\end{equation}
where the $t^j$'s are the coordinates of the component tangential to $\Sigma_t$ of $e_{n+1}.$ For the proofs we refer to \cite{Ge1} and \cite{BaySch}, where similar evolution equations are obtained.
\section{The parabolic Dirichlet problem}\label{section parabolic Dirichlet problem}
The aim of this section is to prove the following 
\begin{theorem}\label{theorem parabolic Dirichlet problem} Let $\Omega$ be a uniformly convex bounded domain in $\R^n$ with smooth boundary, let $u_0:\overline{\Omega}\rightarrow\R$ be a smooth, spacelike and strictly convex function, and let $\hat f:\ol{\Omega}\times\R\times[0,+\infty)\rightarrow (0,+\infty),$ $(x,u,t)\mapsto\hat f(x,u,t)$ be a smooth positive function such that $\hat f_t\leq 0.$ We suppose that, for all $x\in\Omega,$
$$H_2[u_0]^{\frac{1}{2}}(x)-\hat f(x,u_0(x),0)\geq 0.$$
Then the parabolic Dirichlet problem 
\begin{equation}\label{parabolic Dirichlet problem T} 
\left\{\begin{array}{rcl}
-\frac{\dot u}{\sqrt{1-|Du|^2}}+H_2[u]^{\frac{1}{2}}& = & \hat f(x,u,t)\mbox{ in }\Omega\times (0,+\infty)\\
u(x,t)& = &u_0(x)\mbox{ on }\partial\Omega\times[0,+\infty)\cup\Omega\times\{0\},
\end{array}\right.
\end{equation}
has an admissible solution $u\in C^{\infty}(\ol\Omega\times [0,+\infty))$ if, on the corner of the parabolic domain, the compatibility conditions of any order are satisfied.
\end{theorem}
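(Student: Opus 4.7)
The plan is the standard strategy for fully nonlinear parabolic problems with compatibility data: short-time existence, a priori estimates on the maximal existence interval, and continuation. On admissible spacelike functions the linearization of $u\mapsto H_2[u]^{1/2}$ is uniformly elliptic, and the corresponding parabolic operator is uniformly parabolic; together with the compatibility conditions of all orders at the parabolic corner $\partial\Omega\times\{0\}$, standard theory (e.g.\ Ladyzhenskaya--Solonnikov--Uraltseva) produces a smooth admissible solution on a maximal interval $[0,T^*)$. My task is to prove $T^*=+\infty$ by deriving uniform $C^k$ estimates up to $T^*$.

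A key preliminary observation is monotonicity in time. Setting $w:=\dot u$ and differentiating the PDE gives a linear parabolic equation of the form $\dot w - L w + \hat f_u\,w = -\hat f_t$, where $L$ is the linearized scalar-curvature operator. Since $w(\cdot,0)=\sqrt{1-|Du_0|^2}\bigl(H_2[u_0]^{1/2}-\hat f(\cdot,u_0,0)\bigr)\geq 0$ by hypothesis, $w=0$ on $\partial\Omega\times[0,T^*)$ (the boundary data are independent of $t$), and the source $-\hat f_t\geq 0$, the parabolic maximum principle yields $\dot u\geq 0$ throughout. Consequently $H_2[u]^{1/2}\geq\hat f>0$ is preserved, giving a quantitative positive lower bound on the scalar curvature, and $u\geq u_0$ follows immediately. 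An upper barrier is obtained by comparison with a constant-mean-curvature graph over $\ol\Omega$ (Treibergs) via the Mac-Laurin inequality (\ref{mac laurin}) and the comparison principle for the mean-curvature operator; this yields the $C^0$ estimate. The gradient bound $|Du|\leq 1-\theta$, uniform in $t$, is then obtained by applying the maximum principle to an auxiliary quantity of the form $\log(1-|Du|^2)+\kappa u$, using the spacelike character of $u_0$ and the uniform convexity of $\Omega$ to control $|Du|$ on $\partial\Omega$.

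The main obstacle is the $C^2$ estimate, and specifically its boundary part. In the interior, a uniform second-fundamental-form bound comes from a standard maximum-principle argument applied to $\sup_{|\xi|=1}h(\xi,\xi)$, using the evolution equation (\ref{evolution hij}), the concavity of $F=H_2^{1/2}$ on the admissible cone, and the $C^1$ bound. On $\partial\Omega$ the tangential--tangential components are determined by the boundary data, the mixed tangential--normal components are controlled by standard linear barriers, and the delicate double-normal component requires constructing upper and lower barriers built from the tangent paraboloids of $u_0$ at each boundary point, exploiting the uniform convexity of $\Omega$ together with the strict convexity of $u_0$ to dominate the second normal derivative. This is exactly where the strict-convexity hypothesis on $u_0$ is used, and the argument is the parabolic analog of the elliptic estimates in \cite{Bay,U}.

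Once a uniform $C^2$ bound and the lower bound $H_2\geq\hat f^2>0$ are in place, the equation is uniformly parabolic on $[0,T^*)$ and $F=H_2^{1/2}$ is concave on the admissible cone; the Evans--Krylov parabolic theorem then delivers interior $C^{2,\alpha}$ estimates, Krylov's boundary theory gives boundary $C^{2,\alpha}$ estimates, and Schauder bootstrapping produces $C^{k,\alpha}$ bounds for every $k$, uniformly on $\ol\Omega\times[0,T^*)$. These estimates allow the short-time existence argument to be restarted at any time $t<T^*$, contradicting the maximality of $T^*$ unless $T^*=+\infty$, which completes the proof.
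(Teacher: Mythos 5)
Your overall architecture --- short-time existence from the compatibility conditions, continuation via uniform estimates on a maximal interval, monotonicity $\dot u\geq 0$ giving the preserved lower bound $H_2^{1/2}\geq\hat f$, and interior gradient and curvature bounds by maximum principles --- matches the paper. But there is a genuine gap at the boundary $C^2$ estimate, specifically the double normal derivative $u_{nn}$. You propose to control it by ``upper and lower barriers built from the tangent paraboloids of $u_0$ at each boundary point.'' For the operator $H_2^{1/2}$ this does not work: unlike the Monge--Amp\`ere/Gauss-curvature case, the equation gives no pointwise control of $u_{nn}$ in terms of the tangential and mixed second derivatives unless one already has a uniform positive lower bound for the tangential restriction of the operator, and a local barrier attached to a single boundary point cannot produce that. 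The paper instead uses Trudinger's technique \cite{T2,ILT} (as in \cite{Bay} for the elliptic problem): one considers the tangential quantity $\tilde F_1(\partial u_0,\partial^2u_0+u_\gamma\partial\gamma)$ over the whole parabolic boundary, locates the point $(y,t_1)$ where it is minimized, shows that an upper bound for $u_{nn}$ everywhere reduces to an upper bound at $(y,t_1)$, and only then runs a barrier argument at that one point with a function $g(x,p)$ built from $\tilde F_1$. Without this global reduction the double normal estimate, and hence the continuation argument, does not close.

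Two smaller points. First, the boundary gradient estimate is asserted rather than proved: the interior test function $e^{Ku}\nu$ (equivalently your $\log(1-|Du|^2)+\kappa u$) only pushes the maximum of $\nu$ to the parabolic boundary, and on $\partial\Omega\times(0,T)$ one still needs genuine elliptic barriers. The paper takes an upper barrier solving $H_1[u_1]=c$ small (Bartnik--Simon \cite{BS}, compared through the Mac-Laurin inequality (\ref{mac laurin})) and a lower barrier solving $K[u_2]=c$ large (Delano\"e \cite{De}); it is this lower barrier, not the double-normal estimate, that uses the strict convexity of $u_0$. Second, your time-differentiation argument gives only the lower bound $F-\hat f\geq 0$; the interior curvature estimate also requires an upper bound on the normal velocity $F$ (to control the terms $Fh_{11}^2$ and $(F-\hat f)h_{11}^2$ in (\ref{evolution hij})), which the paper obtains by applying the maximum principle to $(F-\hat f+C/K)e^{-Kt}$. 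Your linear equation for $w=\dot u$ would yield this as well, but the step should be stated.
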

At the boundary, compatibility conditions of any order are fulfilled, so we get a smooth admissible solution for a short time interval. 
 
We consider $T$ maximal such that the parabolic Dirichlet problem (\ref{parabolic Dirichlet problem T}) has an admissible solution on $\ol\Omega\times [0,T),$ and suppose by contradiction that $T<+\infty.$
We need the following a priori estimates:
\begin{equation}\label{a priori estimates Dirichlet parabolic}
\sup_{\ol\Omega\times[0,T)}|Du|\leq 1-\theta,\ \sup_{\ol\Omega\times[0,T)}|H_2[u]^{\frac{1}{2}}-\hat f|\leq C_1,\ \sup_{\ol\Omega\times[0,T)}|D^2u|\leq C_2,
\end{equation}
and
\begin{equation}\label{estimate admissible}
\alpha_0\leq\inf_{\ol\Omega\times[0,T)}H_2[u]^{\frac{1}{2}}.
\end{equation}
with $\theta\in(0,1],$ $C_1,C_2\geq 0,$ and $\alpha_0>0.$ With these estimates at hand (and the obvious $C^0$ estimate), the estimates of Krylov and Safonov and the Schauder theory imply estimates of higher derivatives of $u.$ We may thus extend $u$ to a solution on $[0,T].$ Admissibility at the time $T$ is guaranteed by (\ref{estimate admissible}) and Mac-Laurin inequality (\ref{mac laurin}). The short time existence theory then yields a solution on $[0,T+\epsilon),$ $\epsilon>0,$ and thus a contradiction with the definition of $T.$  

In the rest of the section we carry out estimates (\ref{a priori estimates Dirichlet parabolic}) and (\ref{estimate admissible}). Instead of (\ref{parabolic Dirichlet problem T}) we will also consider the equivalent problem
\begin{equation}\label{parabolic Dirichlet problem T   X}
\left\{\begin{array}{rcl}
\dot X&=&(F-\hat f)N\mbox{ in }\Sigma_0\times(0,T)\\
X&=&X_0\mbox{ on }\partial\Sigma_0\times[0,T)\cup\Sigma_0\times\{0\},
\end{array}\right.
\end{equation}
where $\Sigma_0=\graph u_0$ and $X:\Sigma_0\times[0,T)\rightarrow\R^{n,1}$ denotes the embedding vector in $\R^{n,1}.$ 

We will denote by $D_0$ the domain of dependence in $\R^{n,1}$ of the boundary data $\Sigma_0=\graph u_0$ (a point $p$ belongs to $D_0$ if every non-spacelike ray through $p$ intersects $\Sigma_0$). $D_0$ is a compact subset of $\R^{n,1},$ and, since $X=X_0$ on $\partial\Sigma_0\times[0,T)$ and $X$ is spacelike, $X(x,t)$ belongs to $D_0$ during the evolution.

\subsection{The $C^1$ estimate}\label{subsection C1 estimate}. 

\subsubsection{The maximum principle for the first derivatives}
\begin{proposition} \label{C1 estimate maximum principle}
Let $X:\Sigma_0\times[0,T)\rightarrow\R^{n,1}$ be a smooth solution of (\ref{parabolic Dirichlet problem T X}). Then
$$\sup_{\Sigma_0\times[0,T)}\nu\leq C,$$
where $C$ depends on the $C^0$ estimate, on $\displaystyle{\sup_{D_0\times[0,T]}|D\log\hat f|_{eucl}}$ and on an upper bound of $\nu$ on the parabolic boundary $\partial\Sigma_0\times[0,T)\cup\Sigma_0\times\{0\}.$
\end{proposition}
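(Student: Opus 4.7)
The plan is to apply the parabolic maximum principle to the auxiliary function
$$w := \log \nu + \mu u,$$
where $u = -\langle e_{n+1},X\rangle$ is the height function and $\mu>0$ is a constant to be fixed later. First I would compute $\dot w - F^{ij}w_{ij}$ using the evolution equations (\ref{evolution height u}) and (\ref{evolution nu time xn+1}); a direct calculation gives
$$\dot w - F^{ij}w_{ij} \;=\; -F^{ij}h_{ik}h^k_j + \frac{\hat f_j t^j}{\nu} + \frac{F^{ij}\nu_i\nu_j}{\nu^2} - \mu\hat f\nu.$$

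Next comes the central algebraic point. Differentiating $\nu=-\langle e_{n+1},N\rangle$ and using Weingarten's formula yields $\nu_i=-h_{ik}t^k$, while the decomposition $e_{n+1}=\nu N+t^j\partial_j X$ together with $\langle e_{n+1},e_{n+1}\rangle=-1$ forces $|t|_g^2=\nu^2-1$. In a frame simultaneously diagonalizing $g$ and $h$, with principal curvatures $\lambda_1,\dots,\lambda_n,$ these facts give
$$F^{ij}\nu_i\nu_j \;=\; \sum_i F_i\lambda_i^2\,t_i^2\;\leq\;(\nu^2-1)\sum_i F_i\lambda_i^2\;=\;(\nu^2-1)\,F^{ij}h_{ik}h^k_j,$$
so the term $F^{ij}\nu_i\nu_j/\nu^2$ is absorbed by $-F^{ij}h_{ik}h^k_j$ and the first and third terms above sum to a nonpositive quantity.

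To estimate the remaining source term I would write $t^j\partial_jX=e_{n+1}-\nu N$, so that $\hat f_j t^j = \langle D\hat f,\,e_{n+1}-\nu N\rangle_{eucl}$. Since $|N|_{eucl}\leq\sqrt{2}\,\nu$, one obtains
$$\left|\frac{\hat f_j t^j}{\nu}\right|\;\leq\; C\,|D\hat f|_{eucl}\,\nu\;=\;C\,\hat f\,|D\log\hat f|_{eucl}\,\nu$$
for an absolute constant $C$. For every $T'<T$, $w$ attains its maximum on $\overline\Sigma_0\times[0,T']$; if this maximum is interior, the parabolic maximum principle together with the previous two steps yields
$$\mu\hat f\nu\;\leq\;C\,\hat f\,|D\log\hat f|_{eucl}\,\nu,$$
that is, $\mu\leq C\sup_{D_0\times[0,T]}|D\log\hat f|_{eucl}$. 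Choosing $\mu$ strictly larger than this quantity rules out interior maxima, so $\sup w$ is attained on the parabolic boundary $\partial\Sigma_0\times[0,T')\cup\Sigma_0\times\{0\}$. Combined with the $C^0$ bound on $u$ and the hypothesized upper bound of $\nu$ on the parabolic boundary, this yields the desired estimate $\sup \nu\leq C$.

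The main obstacle I expect lies in the algebraic step: a crude Cauchy--Schwarz bound on $F^{ij}\nu_i\nu_j$ in terms of $|t|^2\tr F$ loses the quadratic factor in the principal curvatures and destroys the cancellation with $-F^{ij}h_{ik}h^k_j$, after which the linear-in-$\nu$ term $-\mu\hat f\nu$ cannot be made to dominate for any choice of $\mu$. The sharp identity $\nu_i=-h_{ik}t^k$, and the fact that the tangential components $t_i$ satisfy $\sum_i t_i^2=\nu^2-1$, are therefore essential to make the maximum principle close.
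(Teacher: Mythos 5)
Your proposal is correct and follows essentially the same route as the paper: the paper applies the maximum principle to $\psi=e^{Ku}\nu$, whose logarithm is exactly your $w=\log\nu+\mu u$, uses the same evolution equations (\ref{evolution height u}) and (\ref{evolution nu time xn+1}), absorbs $F^{ij}\nu_i\nu_j/\nu^2$ into $-F^{ij}h_{ik}h^k_j$ via $\nu_i=u_i\lambda_i$ and $g_{ij}t^it^j\leq\nu^2$, and concludes by choosing $K>\sup_{D_0\times[0,T]}|D\log\hat f|_{eucl}$. Your sharper use of $|t|^2=\nu^2-1$ in place of $\leq\nu^2$ is an inessential variant of the same cancellation.
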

\begin{proof}
Let $K$ be a positive constant to be chosen later. At an interior maximum of $\psi=e^{Ku}\nu,$ we have
\begin{equation*}
\dt(\log\psi)-F^{ij}(\log\psi)_{ij}\geq 0.
\end{equation*}
Thus, using (\ref{evolution height u}),
\begin{equation}\label{condition max psi}
\dt(\log\nu)-F^{ij}(\log\nu)_{ij}-K\hat f\nu\geq 0.
\end{equation}
Moreover, we have
$$\dt(\log\nu)-F^{ij}(\log\nu)_{ij}=\frac{1}{\nu}\left(\dot\nu-F^{ij}\nu_{ij}\right)+\frac{1}{\nu^2}F^{ij}\nu_i\nu_j,$$
with
\begin{eqnarray*}
\dot\nu-F^{ij}\nu_{ij}&=&\hat f_jt^j-\nu F^{ij}h_j^kh_{ki}\\
&\leq&\nu|\nabla\hat f|-\nu \sum_{i=1}^nF^{ii}\lambda_i^2,
\end{eqnarray*}
and $\nu_i=u_i\lambda_i.$ Here the tensors are written in an orthonormal basis of principal directions, and we used that $g_{ij}t^it^j\leq \nu^2.$ Thus, (\ref{condition max psi}) implies
$$\left(\sum_{i=1}^nF^{ii}\lambda_i^2-\sum_{i=1}^nF^{ii}\lambda_i^2\frac{u_i^2}{\nu^2}\right)+K\hat f\nu\leq|\nabla\hat f|.$$
Discarding the first term which is positive, and using $|\nabla\hat f|\leq \nu|D\hat f|_{eucl},$ we obtain $K\hat f\leq |D\hat f|_{eucl},$ which is impossible for $K$ sufficiently large such that
\begin{equation}\label{condition K}
K>\sup_{D_0\times[0.T]}|D\log\hat f|_{eucl}.
\end{equation}
Thus, if $K$ satisfies (\ref{condition K}), the function $\psi=e^{Ku}\nu$ reaches its maximum on the parabolic boundary of $\Sigma_0\times[0,T),$ and the result follows.
\end{proof}
\subsubsection{The $C^1$ estimate at the boundary} 
\begin{proposition}\label{C1 estimate boundary}
Let $u:\ol\Omega\times [0,T)\rightarrow\R$ be an admissible solution of (\ref{parabolic Dirichlet problem T}). Then there exists $\theta\in (0,1]$ such that
$$\sup_{\partial\Omega\times[0,T)}|Du|\leq 1-\theta.$$
The number $\theta$ depends on $\inf_{D_0\times[0,T]}\hat f,$ $\sup_{D_0\times[0,T]}\hat f,$ and $\sup_{\ol\Omega}|Du_0|.$ 
\end{proposition}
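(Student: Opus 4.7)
Since $u(\cdot,t)\equiv u_0$ on $\partial\Omega\times[0,T)$, for any vector $\tau$ tangent to $\partial\Omega$ the tangential derivative at the boundary satisfies $D_\tau u=D_\tau u_0$; in particular $|D_\tau u|\leq\sup_{\ol\Omega}|Du_0|<1$. It therefore suffices to bound $\partial u/\partial\nu$ from above and below on $\partial\Omega\times[0,T)$, where $\nu$ is the outward unit normal to $\partial\Omega$. The two one-sided bounds will come from comparison of $u$ with suitable barriers sharing its initial and boundary data.

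For the upper bound on $\partial u/\partial\nu$, I would compare $u$ with the time-independent function $v(x,t):=u_0(x)$. Using the compatibility assumption at $t=0$ together with $\hat f_t\leq0$,
$$-\frac{\dot v}{\sqrt{1-|Dv|^2}}+H_2[v]^{1/2}=H_2[u_0]^{1/2}(x)\geq\hat f(x,u_0(x),0)\geq\hat f(x,v,t),$$
so $v$ is a stationary subsolution of (\ref{parabolic Dirichlet problem T}) with the same parabolic boundary data as $u$. The comparison principle for the admissible parabolic operator $-\dot u/\sqrt{1-|Du|^2}+H_2^{1/2}$ (both $u$ and $u_0$ being admissible and spacelike) yields $u\geq u_0$ on $\ol\Omega\times[0,T)$. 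Combined with $u\equiv u_0$ on $\partial\Omega$, this forces $\partial u/\partial\nu\leq\partial u_0/\partial\nu$ on the boundary, so $\partial u/\partial\nu$ is bounded above by $\sup_{\ol\Omega}|Du_0|$.

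For the lower bound on $\partial u/\partial\nu$, fix $x_0\in\partial\Omega$. The uniform convexity of $\Omega$ provides an exterior ball $B_R(y_0)$ meeting $\ol\Omega$ only at $x_0$. I would construct a local upper barrier $w$ on $\ol\Omega\cap\ol B_\rho(x_0)$ which is spacelike, admissible, satisfies $w(x_0)=u_0(x_0)$, is a spatial supersolution in the sense $H_2[w]^{1/2}\geq\sup_{D_0\times[0,T]}\hat f$, and dominates $u$ on the parabolic boundary of the local cylinder. A natural family of candidates, analogous to the boundary barriers of \cite{Bay,U} for the elliptic Dirichlet problem, uses spacelike pieces of hyperboloids anchored by the exterior ball with curvature parameter tuned through a small parameter $b$, together with a linear correction matching $Du_0(x_0)$; the exterior-ball estimate $|x-y_0|^2-R^2\geq c\,|x-x_0|^2$ combined with the Lipschitz character of $u_0$ provides the required dominance on $\partial\Omega\cap B_\rho(x_0)$, while the obvious global $C^0$ bound on $u$ handles the rest of the parabolic boundary after adjusting $\rho$ and the vertical position of $w$. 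Parabolic comparison then yields $u\leq w$ in $\Omega\cap B_\rho(x_0)\times[0,T)$, whence $\partial u/\partial\nu(x_0,t)\geq\partial w/\partial\nu(x_0)$, and the latter is bounded below by a constant strictly greater than $-1$ depending only on $\inf_{D_0\times[0,T]}\hat f$, $\sup_{D_0\times[0,T]}\hat f$, and $\sup_{\ol\Omega}|Du_0|$.

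Combining the tangential bound with the two one-sided normal bounds yields $|Du|\leq 1-\theta$ on $\partial\Omega\times[0,T)$ with $\theta\in(0,1]$ of the stated dependence. The main obstacle is the third step: the local upper barrier must be simultaneously spacelike (with uniform gap $|Dw|<1$), admissible with $H_2^{1/2}$ exceeding $\sup\hat f$, and an upper bound for $u$ on the local parabolic boundary. These constraints are tight in the Minkowski setting, and only the uniform convexity of $\Omega$ provides the geometric room to wedge such a spacelike supersolution of large scalar curvature between the Dirichlet data on $\partial\Omega$ and the interior solution near $x_0$.
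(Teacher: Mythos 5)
Your first comparison is a legitimate variant of the paper's: taking $v=u_0$ itself as the stationary lower barrier, via the compatibility condition and $\hat f_t\leq 0$, does give $u\geq u_0$ and hence the one-sided bound $\partial u/\partial\nu\leq\partial u_0/\partial\nu$. (The paper instead solves a Gauss curvature Dirichlet problem $K[u_2]=c$ with $c$ large to get a strictly convex $u_2\leq u_0$ with $H_2[u_2]^{1/2}\geq\sup_{D_0\times[0,T]}\hat f$; that route avoids the compatibility hypothesis and, more importantly, the $u$-dependence of $\hat f$ in the comparison --- you only control $\hat f(x,u_0(x),t)$, whereas the comparison principle needs $\hat f(x,u(x,t),t)$, which the paper handles by comparing against $\sup_{D_0\times[0,T]}\hat f$. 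This is fixable but should be addressed.)

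The second half contains a genuine error of direction, and it is precisely the half where the real difficulty lies. You ask for a local barrier $w$ with $H_2[w]^{1/2}\geq\sup_{D_0\times[0,T]}\hat f$ that dominates $u$ on the local parabolic boundary, and you invoke comparison to conclude $u\leq w$. But for $P_2[u]=-\dot u/\sqrt{1-|Du|^2}+H_2[u]^{1/2}$, a stationary function with $P_2[w]=H_2[w]^{1/2}\geq\sup\hat f\geq P_2[u]$ is exactly the kind of barrier the maximum principle pushes \emph{below} $u$: this is how the paper's $u_2$ (large curvature, equal boundary data) ends up underneath $u$. A barrier lying \emph{above} $u$ must have \emph{small} curvature, and there one cannot work with $H_2$ directly, since $H_2$ is elliptic only on admissible functions and a supersolution of small scalar curvature need not be admissible. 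The device your sketch misses is the Mac-Laurin inequality (\ref{mac laurin}): the solution satisfies $-\dot u/\sqrt{1-|Du|^2}+c(n)H_1[u]\geq\hat f\geq\inf_{D_0\times[0,T]}\hat f$ with $c(n)=\sqrt{(n-1)/(2n)}$, so one compares $u$ in the \emph{mean curvature} operator $P_1$ (elliptic on all spacelike functions) with the Bartnik--Simon solution $u_1$ of $H_1[u_1]=c$, $c$ small, with boundary data $u_0$, which is spacelike up to $\partial\Omega$ and lies above $u$; its inner normal derivative then bounds that of $u$ from above. As written, your two barriers would both lie below $u$ and you would prove the same one-sided bound twice; moreover the hyperboloid construction is only gestured at (you flag it yourself as ``the main obstacle''), so even after correcting the direction the proposal does not contain the upper-barrier argument.
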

\begin{proof}
We fix $x_0\in\partial\Omega,$ and we denote by $n$ the inner normal of $\partial\Omega$ at $x_0.$ We define, for $c(n)=\sqrt{\frac{n-1}{2n}},$
\begin{equation}\label{definition P1 P2}
P_1[u]=-\frac{\dot u}{\sqrt{1-|Du|^2}}+c(n)H_1[u],\ P_2[u]=-\frac{\dot u}{\sqrt{1-|Du|^2}}+H_2[u]^{\frac{1}{2}}.
\end{equation}
\textit{The construction of the upper barrier.}
Let $u_1$ be a spacelike function such that $u_1\geq u_0$ in $\overline{\Omega},$ $u_1=u_0$ on $\partial\Omega,$ and $c(n)H_1[u_1]\leq\inf_{D_0\times[0,T]}\hat f.$ We may take for $u_1$ the solution of $H_1[u_1]=c$ in $\Omega,$ $u_1=u_0$ on $\partial\Omega,$ where $c$ is a small constant; this Dirichlet problem is solved in \cite{BS}, Theorem 4.1. Defining $u_1(x,t):=u_1(x),$ we have $P_1[u_1]\leq P_1[u]$ (since $P_1[u]=\hat f+c(n)H_1[u]-H_2[u]^{\frac{1}{2}}$ and using (\ref{mac laurin})), $u_1\geq u$ on the parabolic boundary, and thus, by the maximum principle,  $u_1\geq u$ on $\ol\Omega\times[0,T).$ Since $u_1(x_0,t)=u(x_0,t)=u_0(x_0)$ for all $t\in[0,T),$  we obtain
$$\partial_nu(x_0,t)\leq\partial_nu_1(x_0),\ \forall t\in[0,T).$$
\textit{The construction of the lower barrier.}
Let $u_2$ be an admissible function such that $u_2\leq u_0$ in $\overline{\Omega},$ $u_2=u_0$ on $\partial\Omega,$ and $H_2[u_2]^{\frac{1}{2}}\geq\sup_{D_0\times[0,T]}\hat f.$ We may take for $u_2$ the strictly convex and spacelike solution of $K[u_2]=c$ in $\Omega,$ $u_2=u_0$ on $\partial\Omega,$ where $c$ is a large constant; $K[u_2]$ stands for the Gauss curvature of $\graph u_2;$ this Dirichlet problem is solved in \cite{De}. Defining $u_2(x,t):=u_2(x),$ we have $P_2[u_2]\geq P_2[u]$ on $\Omega\times(0,T),$ $u_2\leq u$ on the parabolic boundary, and thus, by the maximum principle, $u_2\leq u$ on $\ol\Omega\times[0,T).$ Since $u_2(x_0,t)=u(x_0,t)=u_0(x_0)$ for all $t\in[0,T),$ we obtain
$$\partial_nu(x_0,t)\geq\partial_nu_2(x_0),\ \forall t\in[0,T).$$
Finally, since the tangential derivatives at the boundary of $u,u_1$ and $u_2$ coincide, we get
$$\sup_{\partial\Omega\times[0,T)}|Du|\leq\max(\sup_{\partial\Omega}|Du_1|,\sup_{\partial\Omega}|Du_2|),$$
and the result follows.
\end{proof}
\subsection{The velocity estimate}\label{subsection velocity estimate}
\begin{proposition}\label{lemma velocity bound}
Let $X$ be an admissible solution of the parabolic Dirichlet problem  (\ref{parabolic Dirichlet problem T X}). We recall that $\hat f_t\leq 0$ and we suppose that $C$ and $K$ are two positive constants such that
\begin{equation}\label{bounds f derivatives}
|\hat f_t|\leq C\mbox{ and }\left|N^{\alpha}\hat f_{\alpha}\right|\leq K
\end{equation}
during the evolution.  If $F-\hat f\geq 0$ at $t=0$ then, for all $t\in[0,T),$
\begin{equation}\label{velocity bound}
0\leq F-\hat f\leq e^{Kt}\sup_{t=0}(F-\hat f)+\frac{C}{K}(e^{Kt}-1).
\end{equation}
In particular, there exist two constants $\alpha_0,\beta_0>0$ such that
\begin{equation}\label{F bounded}
\alpha_0\leq F\leq \beta_0
\end{equation}
on $\Sigma_0\times[0,T).$ The constants $\alpha_0,\beta_0$ depend on $C,K,T,$ $\sup_{t=0}(F-\hat f),$ and $\inf_{D_0\times[0,T]}\hat f.$ 
\end{proposition}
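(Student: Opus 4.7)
The plan is to apply the parabolic maximum principle to $w := F - \hat f$, which by (\ref{evolution F-hat f}) satisfies
$$\dot w - F^{ij} w_{ij} \;=\; c\, w + g, \qquad c := -F^{ij} h_{ik} h^k_j - N^{\alpha} \hat f_{\alpha}, \quad g := -\hat f_t.$$
Two sign observations underlie everything. Since $\hat f_t \leq 0$ we have $g \geq 0$, and since $F^{ij}$ is positive definite on an admissible hypersurface, $-F^{ij} h_{ik} h^k_j \leq 0$; combined with (\ref{bounds f derivatives}) this yields $c \leq K$ and $0 \leq g \leq C$. For the boundary data, the Dirichlet condition $X = X_0$ on $\partial \Sigma_0 \times [0,T)$ forces $\dot X = 0$ there, hence $w \equiv 0$ on the spatial boundary, while $w(\cdot, 0) \geq 0$ by hypothesis.

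For the lower bound I would set $v := e^{-Lt} w$ with $L := K+1$ and compute
$$\dot v - F^{ij} v_{ij} \;=\; (c - L)\, v + e^{-Lt} g.$$
If $v$ attained a negative minimum at an interior point $(x_0, t_0)$ with $t_0 > 0$, then $\dot v \leq 0$ and $F^{ij} v_{ij} \geq 0$ there, so the left side is $\leq 0$; but $(c-L)v + e^{-Lt} g > 0$, since $c - L \leq -1 < 0$, $v < 0$, and $g \geq 0$, giving a contradiction. Therefore $v \geq \min_{\partial_{\mathrm{par}}(\Sigma_0 \times [0,T))} v \geq 0$, i.e.\ $w \geq 0$.

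For the upper bound, let $M(t) := e^{Kt}\sup_{t=0}(F - \hat f) + \tfrac{C}{K}(e^{Kt} - 1)$, the solution of the ODE $M' = KM + C$ with $M(0) = \sup_{t=0} w \geq 0$. Then $\tilde w := w - M$ satisfies
$$\dot{\tilde w} - F^{ij} \tilde w_{ij} \;=\; c\, \tilde w + (c - K) M + (g - C) \;\leq\; c\, \tilde w,$$
because $M \geq 0$, $c \leq K$, and $g \leq C$. The same exponential-weight argument applied to $e^{-Lt}\tilde w$, together with the fact that $\tilde w \leq 0$ on the parabolic boundary (at $t=0$ by the choice of $M(0)$; on $\partial \Sigma_0$ because $w = 0$ and $M \geq 0$), yields $\tilde w \leq 0$, so $w \leq M(t)$. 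Combined with $w \geq 0$ this is exactly (\ref{velocity bound}); and since $T < \infty$, $F = w + \hat f$ is bounded below by $\inf_{D_0 \times [0,T]} \hat f =: \alpha_0 > 0$ and above by $M(T) + \sup_{D_0 \times [0,T]} \hat f =: \beta_0$.

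The only subtlety is the sign of the zeroth-order coefficient $c$: the curvature contribution $-F^{ij}h_{ik}h^k_j$ is nonpositive, but $-N^{\alpha}\hat f_{\alpha}$ may have either sign, so a naive maximum principle applied directly to $w$ or $\tilde w$ does not immediately close; the exponential multiplier $e^{-Lt}$ with $L > \sup c$ absorbs the bad part into a strictly negative zeroth-order term and makes the argument go through. Apart from that, the proof is a textbook parabolic comparison with an ODE.
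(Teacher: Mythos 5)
Your proposal is correct and follows essentially the same route as the paper: the paper applies the maximum principle to $\Psi_1=(F-\hat f)e^{-Kt}$ and $\Psi_2=(F-\hat f+\tfrac{C}{K})e^{-Kt}$, which, after unwinding, is exactly your comparison of $w=F-\hat f$ with $0$ and with the ODE supersolution $M(t)$, using the same two sign observations ($-F^{ij}h_{ik}h^k_j\leq 0$, $|N^{\alpha}\hat f_{\alpha}|\leq K$, $0\leq -\hat f_t\leq C$) and the same boundary information ($w=0$ on $\partial\Sigma_0$, $w\geq 0$ at $t=0$). Your choice $L=K+1>K$ is a harmless cosmetic variant that makes the sign of the zeroth-order coefficient strictly negative rather than merely nonpositive.
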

\begin{remark}
The constants $C$ and $K$ in (\ref{bounds f derivatives}) are controlled by $\sup_{D_0\times[0,T]}|\hat f_t|,$\\ $\sup_{D_0\times[0,T]}|D\hat f|_{eucl},$ and by the $C^1$ estimate obtained Section \ref{subsection C1 estimate}. 
\end{remark}
\begin{proof}
We first consider $\displaystyle{\Psi_1:=(F-\hat f)e^{-Kt}.}$ The evolution equation of $\Psi_1$ is
\begin{eqnarray*}
\dot\Psi_1-F^{ij}{\Psi_1}_{ij}&=&\left[\dt(F-\hat f)-F^{ij}(F-\hat f)_{ij}-K(F-\hat f)\right]e^{-Kt}\\
&=&\left[-F^{ij}h_{ik}h_j^k(F-\hat f)-N^{\alpha}\hat f_{\alpha}(F-\hat f)-\hat f_t-K(F-\hat f)\right]e^{-Kt}\\
&\geq&\left[-F^{ij}h_{ik}h_j^k-N^{\alpha}\hat f_{\alpha}-K\right]\Psi_1,
\end{eqnarray*}
since $\hat f_t\leq 0.$ Let $T_1\in(0,T).$ The function $\Psi_1$ on $\ol{\Omega}\times[0,T_1]$ reaches its minimum at some point $(x_0,t_0).$ Assume that $(x_0,t_0)\in\Omega\times(0,T_1]$ and that $\Psi_1(x_0,t_0)<0.$ At $(x_0,t_0),$ we have $\displaystyle{\dot\Psi_1-F^{ij}{\Psi_1}_{ij}\leq 0,}$ which gives
\begin{equation}\label{inequation velocity bound1}
-F^{ij}h_{ik}h_j^k-N^{\alpha}\hat f_{\alpha}-K\geq 0
\end{equation}
and a contradiction with (\ref{bounds f derivatives}). Since $\Psi_1\geq 0$ on the parabolic boundary, we conclude that $\Psi_1\geq 0$ on $\overline\Omega\times[0,T),$ and thus that $\displaystyle{F-\hat f\geq 0}$ on $\displaystyle{\overline\Omega\times[0,T).}$

We now consider $\displaystyle{\Psi_2:=\left(F-\hat f+\frac{C}{K}\right)e^{-Kt},}$ whose evolution equation is
\begin{equation*}
\dot\Psi_2-F^{ij}{\Psi_2}_{ij}=\left[-F^{ij}h_{ik}h_j^k(F-\hat f)-N^{\alpha}\hat f_{\alpha}(F-\hat f)-\hat f_t-K(F-\hat f)-C\right]e^{-Kt}.
\end{equation*}
Let $T_2\in(0,T).$ The function $\Psi_2$ on $\ol{\Omega}\times[0,T_2]$ reaches its maximum at some point $(x_0,t_0).$ Assume that $(x_0,t_0)\in\Omega\times(0,T_2]$ and that $\Psi_2(x_0,t_0)>\frac{C}{K}.$ The latter implies that $F-\hat f>0$ at $(x_0,t_0).$ At $(x_0,t_0),$ $\displaystyle{\dot\Psi_2-F^{ij}{\Psi_2}_{ij}\geq 0,}$ which gives
\begin{equation}\label{inequation velocity bound2}
-F^{ij}h_{ik}h_j^k(F-\hat f)-N^{\alpha}\hat f_{\alpha}(F-\hat f)-\hat f_t-K(F-\hat f)-C\geq 0
\end{equation}
and a contradiction with (\ref{bounds f derivatives}). Since $\Psi_2\leq\frac{C}{K}$ on $\partial\Omega\times[0,T)$ and $\Psi_2\geq\frac{C}{K}$ on $\Omega\times \{0\},$ we conclude that, for all $(x,t)\in\overline\Omega\times[0,T),$
$$\Psi_2(x,t)\leq\sup_{x\in\ol\Omega}\Psi_2(x,0),$$
which proves the proposition.
\end{proof}
\subsection{The $C^2$ estimate}
\subsubsection{The maximum principle for the second derivatives}\label{Dirichlet problem C2 max pple}
\begin{proposition}
Let $X:\Sigma_0\times[0,T)\rightarrow\R^{n,1}$ be a solution of (\ref{parabolic Dirichlet problem T X}). Then
$$\sup_{\xi\in T\Sigma_t,\ |\xi|=1}h_{ij}\xi^i\xi^j\leq C$$
during the evolution, for some constant $C$ which depends on a bound of the second fundamental form on the parabolic boundary $\partial\Sigma_0\times[0,T)\cup\Sigma_0\times\{0\}$ and on estimates obtained before.
\end{proposition}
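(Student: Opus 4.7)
The approach is to apply the parabolic maximum principle to an auxiliary function of the form
\[W=\log\lambda_1+\alpha\log\nu+\beta u,\]
where $\lambda_1(x,t):=\sup_{\xi\in T_x\Sigma_t,\,\abs{\xi}=1}h_{ij}\xi^i\xi^j$ is the largest principal curvature, $\nu=-\langle e_{n+1},N\rangle$, $u=-\langle e_{n+1},X\rangle$, and $\alpha,\beta>0$ are constants to be chosen suitably large. Since $u$ is under control by the $C^0$-comparison with the barriers and $\nu$ is bounded by Propositions \ref{C1 estimate maximum principle}--\ref{C1 estimate boundary}, an upper bound on $W$ translates into the sought estimate on $\lambda_1$. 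As $W$ is continuous on $\ol\Sigma_0\times[0,T_1]$ for every $T_1<T$ and is controlled on the parabolic boundary by hypothesis, it remains only to rule out a large interior maximum.

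Suppose that $W$ attains its supremum at an interior point $(x_0,t_0)$. Choose an orthonormal frame of $T_{x_0}\Sigma_{t_0}$ in which $(h^i_j)$ is diagonal with $\lambda_1=h_{11}>\lambda_2\geq\cdots\geq\lambda_n$; replacing $\log\lambda_1$ by $\log h_{11}$ (which has the same value at $(x_0,t_0)$ and is $\leq\log\lambda_1$ nearby) allows us to apply the parabolic inequality $\dot W-F^{ij}W_{ij}\geq 0$ at $(x_0,t_0)$. Combining the evolution equations (\ref{evolution hij}), (\ref{evolution nu time xn+1}) and (\ref{evolution height u}) with the velocity bound (\ref{F bounded}), the Codazzi identity $h_{11;i}=h_{i1;1}$, and the formula $\nu_i=\lambda_i u_i$ in the chosen frame, this inequality expands into an estimate of the form
\[(1+\alpha)\sum_i F^{ii}\lambda_i^2\;\leq\;C(1+\lambda_1)\Bigl(1+\sum_i F^{ii}\Bigr)+\frac{F^{kl,pq}h_{kl;1}h_{pq;1}}{\lambda_1}+\frac{F^{ii}(h_{11;i})^2}{\lambda_1^2},\]
where the constant $C$ depends only on the previous $C^0$, $C^1$ and velocity estimates, and on a bound for $\hat f$ and its first two derivatives on $D_0\times[0,T]$.

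The next step is the standard concavity argument: because $F=H_2^{1/2}$ is concave on the admissible cone $\Gamma_2$, the two terms on the right-hand side involving second covariant derivatives of $h$ combine to a non-positive contribution and may be discarded. It then remains to bound $\sum_i F^{ii}\lambda_i^2$ from below by a positive multiple of $\lambda_1^2$, which is the main obstacle because this lower bound degenerates when the principal curvatures approach the boundary of $\Gamma_2$. The classical way out (as in Ivochkina and Urbas, and already used in the closely related elliptic estimate \cite{Bay2}) is to split according to the size of $\sigma_{1,1}=\sum_{k\neq 1}\lambda_k$ relative to $\lambda_1$: if $\sigma_{1,1}\geq\gamma\lambda_1$ for a fixed small $\gamma>0$, then $F^{11}=\sigma_{1,1}/(2F)$ is large and $\sum_i F^{ii}\lambda_i^2\geq F^{11}\lambda_1^2$ closes the estimate directly; otherwise $\sigma_{1,1}$ is small, in which case the positivity of $H_2$ forces some $\sigma_{1,i}$ with $i\neq 1$ to remain large, and the corresponding term is absorbed using the $\alpha\log\nu$ summand together with $\nu_i=\lambda_i u_i$. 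Choosing $\alpha,\beta$ sufficiently large then produces a contradiction as soon as $\lambda_1$ exceeds an a priori constant, completing the proof.
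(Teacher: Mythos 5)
Your overall strategy (parabolic maximum principle applied to a weighted version of $\log\lambda_1$, reduction to an interior maximum, then a case analysis in the spirit of Urbas) is the right family of argument, and your derivation of the differential inequality at the maximum point is consistent with the evolution equations (\ref{evolution hij}), (\ref{evolution nu time xn+1}), (\ref{evolution height u}). However, there is a genuine gap at the step you call ``the standard concavity argument.'' For $F=H_2^{1/2}$ the concavity inequality controls only the off-diagonal part of $F^{kl,pq}h_{kl;1}h_{pq;1}$: one has
\[
F^{kl,pq}h_{kl;1}h_{pq;1}\leq \sum_{k\neq l}\frac{F^{kk}-F^{ll}}{\lambda_k-\lambda_l}\,h_{kl;1}^2\leq 2\sum_{i\geq 2}\frac{F^{11}-F^{ii}}{\lambda_1-\lambda_i}\,h_{11;i}^2 ,
\]
where Codazzi has been used, and this produces no term whatsoever involving $h_{11;1}$. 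Consequently the positive term $F^{11}(h_{11;1})^2/\lambda_1^2$ on the bad side of your inequality is \emph{not} cancelled, and the claim that the two second-derivative terms ``combine to a non-positive contribution and may be discarded'' is false. Controlling exactly this term is the crux of the estimate; it is done by Urbas (and in the elliptic analogue in \cite{Bay2}) via the first-order condition $\nabla W=0$ at the maximum, which turns $h_{11;1}/h_{11}$ into first derivatives of the weight, combined with a delicate case analysis (pp.\ 314--315 of \cite{U}).

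The second problem is the choice of weight. The paper uses $\tilde W=\eta^{\beta}h_{ij}\xi^i\xi^j$ with $\log\eta=\Phi=\varphi(x_1,\dots,x_n)$ \emph{strictly convex}; strict convexity yields the good term $\beta\bigl(C_0\sum_iF^{ii}-C\bigr)$, which is what allows Urbas's case analysis to close (and is where the strict convexity hypotheses on the data enter). Your weight $\alpha\log\nu+\beta u$ does not produce such a term: the contribution of $\log\nu$ is $\alpha\sum_iF^{ii}\lambda_i^2(1-u_i^2/\nu^2)$ after cancellation with $\alpha F^{ij}\nu_i\nu_j/\nu^2$, which can degenerate, and more importantly the gradient terms generated at the critical point by $\nabla W=0$ scale like $\alpha^2$ and $\beta^2$, so they cannot be absorbed by good terms that are only linear in $\alpha,\beta$; ``choosing $\alpha,\beta$ sufficiently large'' therefore does not close the argument. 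To repair the proof you should replace your weight by the strictly convex one of Lemma \ref{construction phi} (as the paper does) and carry out, rather than merely invoke, the treatment of the $h_{11;1}$ term.
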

\begin{proof}
$ $The estimate relies on J.Urbas $C^2$ estimate \cite{U} for the elliptic Dirichlet problem. We fix $T_1\in(0,T).$ For $t\in[0,T_1],$ $x\in\Sigma_0,$ and $\xi\in T_{X(x,t)}\Sigma_t$ with $|\xi|=1,$ we consider
$$\tilde{W}(x,\xi,t):=\eta^{\beta}(X(x,t))h_{ij}\xi^i\xi^j,$$
where $\eta$ is a positive function on $\R^{n,1}$ and $\beta$ is a positive constant, which will be defined later. We suppose that  $\tilde{W}$ reaches  its  maximum at $(x_0,\xi_0,t_0)$ where $t_0\in(0,T_1],$ $x_0$ is an interior point of $\Sigma_{0},$ and $\xi_0\in T_{X(x_0,t_0)}\Sigma_{t_0},$ with $|\xi_0|=1.$ We choose ${e_1}^0,\ldots,{e_n}^0$ a local frame on $\Sigma_0$ which induces on $\Sigma_{t_0}$ an orthonormal frame $\hat{e_1},\ldots,\hat{e_n}$ such that 
$$\hat{e_1}(X(x_0,t_0))=\xi_0\mbox{ and }\nabla_{\hat{e_i}}\hat{e_j}(X(x_0,t_0))=0.$$
Observe that $\hat{e_1}$ is a principal direction of $\Sigma_{t_0}$ at $X(x_0,t_0),$ associated to the largest principal curvature. We still denote by $\hat{e_1},\ldots,\hat{e_n}$ the frame induced by ${e_1}^0,\ldots,{e_n}^0$ on $\Sigma_t,$ for every $t.$ The function
$$W(x,t):=\eta^{\beta}\ff(\hat{e_1},\hat{e_1})/|\hat{e_1}|^2,$$
where $II$ stands for the second fundamental form of $\Sigma_t,$ reaches its maximum at $(x_0,t_0);$ we get
$$\begin{array}{lll}
\dt {\log W}-F^{ij}{(\log W)}_{ij}&=&\beta\left(\dot{\log\eta}-F^{ij}(\log\eta)_{ij}\right)+\frac{1}{h_{11}}\left(\dot h_{11}-F^{ij}h_{11;ij}\right)\\&&+\frac{1}{h_{11}^2}F^{ij}h_{11;i}h_{11;j}-\frac{\dot g_{11}}{g_{11}}\geq 0.
\end{array}$$
From the evolution equation (\ref{evolution hij}), we get:
$$\begin{array}{rcl}
\dot h_{11}-F^{ij}h_{11;ij}&=&Fh_{11}^2-h_{11}F^{kl}(h_k^ah_{al})\\&&+F^{kl,pq}h_{kl;1}h_{pq;1}-\hat f_{11}+(F-\hat f)h_{11}^2,
\end{array}$$
and thus:
\begin{equation}\label{inequality fundamental C2 interior estimate}
\begin{array}{rcl}
\beta\left(\dot{\log\eta}-F^{ij}(\log\eta)_{ij}\right) +Fh_{11}-F^{kl}(h_k^ah_{al})&&\\+\frac{1}{h_{11}}F^{kl,pq}h_{kl;1}h_{pq;1}-\frac{\hat f_{11}}{h_{11}}+(F-\hat f)h_{11} &&\\+\frac{1}{h_{11}^2}F^{ij}h_{11;i}h_{11;j}-\frac{\dot g_{11}}{g_{11}}&\geq& 0.
\end{array}
\end{equation}
Recalling (\ref{F bounded}), $|F-\hat f|$ is under control, and we have the following estimates (where the largest principal curvature is denoted by $\lambda_1$, and $C_1,C_2,C_3$ are constants under control): 
$$Fh_{11}+(F-\hat f)h_{11}\leq C_1(1+\lambda_1),$$
\begin{equation}\label{estimate hat f 11}
|\hat f_{11}|\leq C_2(1+\lambda_1),
\end{equation}
and, from (\ref{evolution gij}),
$$\left|\frac{\dot g_{11}}{g_{11}}\right|\leq C_3\lambda_1.$$
For estimate (\ref{estimate hat f 11}), we refer to \cite{U} p312-313. Thus
\begin{equation}\label{inequality fundamental C2 interior estimate reduced}
\begin{array}{rcl}
0&\geq&-\beta\left(\dot{\log\eta}-F^{ij}(\log\eta)_{ij}\right) +F^{kl}(h_k^ah_{al})-C_4(1+\lambda_1)\\&&-\frac{1}{h_{11}}F^{kl,pq}h_{kl;1}h_{pq;1}-\frac{1}{h_{11}^2}F^{ij}h_{11;i}h_{11;j},
\end{array}
\end{equation}
for some constant $C_4.$ We choose $\log\eta=\Phi,$ with
$$\Phi(x_1,\ldots,x_n,x_{n+1}):=\varphi(x_1,\ldots,x_n),$$
where $\varphi$ is some strictly convex function on $\R^n.$ Note that 
\begin{equation}\label{dot Phi}
\dot{\log\eta}=(F-\hat f)d\Phi_{X(x_0,t_0)}(N)
\end{equation}
is under control. Moreover, following J.Urbas \cite{U}, page 313,
 $$ F^{ij}(\log\eta)_{ij}\geq C_0\tau-C$$
 where $\tau=\sum_iF^{ii},$ and $C_0,C$ are constants under control. Finally,
 $$\begin{array}{rcl}
 0&\geq& \beta(C_0\tau-C')+F^{kl}(h_k^ah_{al})-C_4(1+\lambda_1)\\
 &&-\frac{1}{\lambda_1}F^{kl,pq}h_{kl;1}h_{pq;1}-\frac{1}{\lambda_1^2}F^{ij}h_{11;i}h_{11;j},
\end{array}$$
where $C'$ is under control, which is analogous to inequality (2.8) obtained by J.Urbas  in \cite{U} page 312. We then obtain the estimate of $\lambda_1$ following  the arguments used in \cite{U} p. 314-315, without any modification. This gives the $C^2$ estimate if the $C^2$ estimate at the parabolic boundary is known.  
\end{proof}

\subsubsection{The $C^2$ estimate at the boundary}
\begin{proposition}
Let $u:\overline{\Omega}\times[0,T)\rightarrow\R$ be a smooth solution of the parabolic Dirichlet problem (\ref{parabolic Dirichlet problem T}). Then
$$\sup_{(x,t)\in\partial\Omega\times(0,T)}|D^2u(x,t)|\leq C,$$
for some constant $C$ under control (which depends on the estimates obtained Sections \ref{subsection C1 estimate} and \ref{subsection velocity estimate}).
\end{proposition}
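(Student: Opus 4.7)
The plan is to fix $(x_0,t_0)\in\partial\Omega\times(0,T)$, work in a local orthonormal frame $(e_1,\ldots,e_{n-1},e_n)$ at $x_0$ with $e_n$ the inner unit normal to $\partial\Omega$, and bound separately the three blocks of $D^2 u(x_0,t_0)$: the tangential-tangential $u_{\alpha\beta}$ for $\alpha,\beta<n$, the mixed tangential-normal $u_{\alpha n}$, and the normal-normal $u_{nn}$. For the first block, differentiating the boundary identity $u(\cdot,t)=u_0$ twice in tangential directions at $x_0$ expresses $u_{\alpha\beta}(x_0,t_0)$ as the tangential Hessian of $u_0$ corrected by $\partial_n u(x_0,t_0)$ contracted with the second fundamental form of $\partial\Omega$, all quantities controlled by the data and by the $C^1$ boundary estimate of Proposition \ref{C1 estimate boundary}.

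The main step is the mixed estimate, which I would adapt from Urbas' barrier argument for the elliptic scalar curvature Dirichlet problem \cite{U} to the parabolic setting. Fix a tangential direction $\tau=e_\alpha$ and a small $\rho>0$. On the half-cylinder $Q_\rho:=(B_\rho(x_0)\cap\Omega)\times[0,t_0]$ I would consider
$$\Psi^\pm(x,t) := \pm D_\tau(u-u_0)(x,t) - A\,(u-u_2)(x,t) - M\,|x-x_0|^2,$$
where $u_2$ is the lower barrier from the proof of Proposition \ref{C1 estimate boundary} (a uniformly strictly convex solution of a Gauss curvature Dirichlet problem) and $A,M>0$ are to be chosen. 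On $\partial\Omega\cap B_\rho(x_0)$, the identity $u=u_0=u_2$ kills the first two terms and the third is $\leq 0$; on the remaining parts of the parabolic boundary, the $C^1$ estimate and $u\geq u_2$ together with $M$ large enough give $\Psi^\pm\leq 0$. Applying the parabolic operator $\partial_t-F^{ij}\nabla_{ij}$ and using the evolution equation (\ref{evolution height u}) and its tangential derivative, one finds that the $\pm D_\tau(u-u_0)$ contribution is bounded by a constant under control, the $-A(u-u_2)$ contribution dominates a positive multiple of $A\sum_i F^{ii}$ thanks to the strict convexity of $u_2$, and the quadratic term adds another positive multiple of $\sum_i F^{ii}$. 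Since the trace $\sum_i F^{ii}$ is bounded below by a positive constant on admissible functions with bounded $H_2$ (as in \cite{U}), choosing $A,M$ large enough yields $(\partial_t-F^{ij}\nabla_{ij})\Psi^\pm\leq 0$ in $Q_\rho$. The parabolic maximum principle then gives $\Psi^\pm\leq 0$ on $Q_\rho$; since $\Psi^\pm(x_0,t_0)=0$, differentiating along $e_n$ at $(x_0,t_0)$ yields the bound on $|u_{\tau n}(x_0,t_0)|$.

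For the normal-normal direction, I would combine the velocity bound $H_2[u]\leq\beta_0^2$ from Proposition \ref{lemma velocity bound} with the strict convexity of $u_0$. At $(x_0,t_0)$, thanks to the two previous steps, the tangential sub-block of the curvature endomorphism $h^i_j$ is a bounded $(n-1)\times(n-1)$ matrix that is uniformly positive definite (its eigenvalues are comparable to those of the positive-definite tangential Hessian of $u_0$). As $u_{nn}(x_0,t_0)\to+\infty$, standard matrix perturbation theory gives $\lambda_1\to+\infty$ with eigenvector close to $e_n$, while $\lambda_2,\ldots,\lambda_n$ converge to the positive eigenvalues of the tangential sub-block; hence $\sigma_{1,1}=\lambda_2+\cdots+\lambda_n\geq c>0$. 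But then $H_2=\lambda_1\,\sigma_{1,1}+\sum_{2\leq i<j}\lambda_i\lambda_j\to+\infty$, contradicting the velocity bound; hence $u_{nn}(x_0,t_0)$ is uniformly bounded.

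The hard part is Step 2: the constants $A,M$ and the radius $\rho$ must be chosen consistently with the lower bound on $\sum_i F^{ii}$ and with the $C^1$ and velocity estimates of Sections \ref{subsection C1 estimate} and \ref{subsection velocity estimate}, and the commutation of $D_\tau$ with the parabolic operator produces first-order terms involving $D(F-\hat f)$ that must be absorbed carefully.
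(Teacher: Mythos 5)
Your decomposition into tangential, mixed and double-normal blocks is the same as the paper's, and the tangential block is indeed harmless; but both of the hard blocks contain genuine gaps, and in each case the gap is exactly the point the paper's machinery is built to handle. For the mixed derivatives, the claim that after applying $\partial_t-\tilde F^{ij}\partial_{ij}$ the $\pm D_\tau(u-u_0)$ contribution is ``bounded by a constant under control'' fails for this operator. Writing the equation as $-\dot u+\tilde F(Du,D^2u)=\hat f\,\gamma(Du)$ and differentiating tangentially, the dependence of $\tilde F$ on $Du$ (through the coefficients $\delta_{ik}+u_iu_k/(1-|Du|^2)$ in (\ref{matrix tilde})) and of $\gamma$ on $Du$ produces terms in which $Du$-derivatives of the coefficients are contracted with $D^2u$; these are of size comparable to $\sum_\alpha\tilde\sigma_{1,\alpha}|u_{\alpha\alpha}|\,|D^2u|$, i.e.\ \emph{quadratic} in the second derivatives being estimated, and a linear barrier $-A(u-u_2)-M|x-x_0|^2$ only supplies a good term of order $A\sum_i\tilde F^{ii}$, which cannot dominate them. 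This is precisely why the paper, following \cite{IL2} and \cite{Bay}, works with $W=g(x,Du)-\frac K2\sum_{k<n}(u_k-u_k(x_0,t_0))^2$, whose quadratic correction generates the good term $-K\sum_\alpha\tilde\sigma_{1,\alpha}u_{\alpha\alpha}^2(\sum_k{\eta^\alpha_k}^2)$ in (\ref{expression LW}), then passes to $\exp(-C_1W)$ to absorb the remaining $\sum\tilde F^{ij}W_iW_j$ in (\ref{bound LW}), and uses a comparison function containing the distance term $h(d)=c_0(1-e^{-b_0d})$ rather than only $|x-x_0|^2$. Your closing caveat about ``first-order terms involving $D(F-\hat f)$'' underestimates the obstruction: it is quadratic, not first order.

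For $u_{nn}$ the proposal rests on the claim that the tangential block of the curvature endomorphism at a boundary point is uniformly positive definite, with eigenvalues comparable to those of the tangential Hessian of $u_0$. Your own Step 1 identity contradicts this: in boundary principal coordinates $u_{\alpha\beta}=(u_0)_{\alpha\beta}-(\partial_nu-\partial_nu_0)\rho_{\alpha\beta}$, and since Proposition \ref{lemma velocity bound} gives $\dot u\geq 0$, hence $u\geq u_0$ with equality on $\partial\Omega$, the factor $\partial_nu-\partial_nu_0$ is nonnegative while $\rho_{\alpha\beta}$ is positive definite ($\Omega$ uniformly convex); the correction therefore has the unfavorable sign, and the tangential block may a priori degenerate (admissibility gives $\sigma_{1,1}>0$ pointwise but no uniform lower bound). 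Without a uniform positive lower bound on the reduced tangential trace, the implication ``$u_{nn}\to\infty\Rightarrow H_2\to\infty$'' does not follow, so the contradiction with the velocity bound is not available. Obtaining that lower bound is the hard point, and the paper gets it by Trudinger's technique \cite{T2,ILT}: one takes the point $(y,t_1)$ where $\tilde F_1(\partial u_0,\partial^2u_0+u_\gamma\partial\gamma)$ attains its minimum over $\partial\Omega\times[0,T_1]$, bounds $u_{nn}(y,t_1)$ by a second barrier argument of the same type as for the mixed derivatives, and only then deduces the bound on $u_{nn}$ everywhere from the bounds on $H_2$. So the overall architecture (three blocks, velocity bound for the last step) matches the paper, but both key steps need the \cite{IL2,Bay,T2} machinery that your simpler barriers bypass.
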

We fix $(x_0,t_0)\in\partial\Omega\times(0,T).$ Following \cite{IL2}, we write the evolution equation (\ref{parabolic Dirichlet problem T}) on the form 
$$-\dot u+\tilde{F}(Du,D^2u)=\hat f(x,u,t)\times\gamma(Du),$$
where $\gamma(Du)=\sqrt{1-|Du|^2},$ and $\tilde{F}(Du,D^2u)$ is the square root of the second elementary symmetric function of the principal values of the $n\times n$ matrix whose coefficient $(i,j)$ is given by
\begin{equation}\label{matrix tilde}
\sum_{k=1}^n\left(\delta_{ik}+\frac{u_iu_k}{1-|Du|^2}\right)u_{kj}.
\end{equation}
Let us consider the linear operator
$$LW:=-\dot W+\tilde{F}^{ij}W_{ij},$$
where $\tilde{F}^{ij}=\frac{\partial \tilde{F}}{\partial q_{ij}}(Du,D^2u).$ We suppose that $(e_1,\ldots, e_n)$ is a basis such that $(e_1,\ldots,e_{n-1})$ is an orthonormal basis made of principal vectors of $\partial\Omega$ at $x_0$ and $e_n$ is the unit inner normal of $\Omega$ at $x_0.$ Let
$$W(x,t):=g(x,Du(x,t))-\frac{K}{2}\sum_{k=1}^{n-1}(u_k(x,t)-u_k(x_0,t_0))^2,$$
where $g:\overline{\Omega}\times B(0,1)\rightarrow\R$ is a given smooth function and $K$ is a constant.
\\

The following key inequality is a lorentzian analog of (2.4) in \cite{IL2}: 
\begin{lemma}
If $K$ is sufficiently large under control, $W$ satisfies
\begin{equation}\label{bound LW}
LW\leq C_1(1+|DW|+\sum_{ij} {\tilde F}^{ij}W_iW_j+\sum_i{\tilde F}^{ii}),
\end{equation}
where $C_1$ depends on the $C^1$ estimate and on the constants $\alpha_0,\beta_0$ in (\ref{F bounded}).
\end{lemma}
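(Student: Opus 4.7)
The plan is to expand $LW$ directly and match each resulting piece against one of the four quantities $1$, $|DW|$, $\tilde F^{ij}W_iW_j$, $\sum_i\tilde F^{ii}$ on the right-hand side of \eqref{bound LW}, choosing $K$ large enough to absorb any remaining cross terms. As a preliminary I would differentiate the equation $-\dot u+\tilde F(Du,D^2u)=\hat f(x,u,t)\gamma(Du)$ in $x_\ell$ to obtain the identity
\begin{equation*}
Lu_\ell = -\tilde F^{p_m}u_{m\ell} + \hat f_{x_\ell}\gamma + \hat f_u u_\ell \gamma + \hat f\gamma_{p_m}u_{m\ell},
\end{equation*}
where $\tilde F^{p_m}$ and $\gamma_{p_m}$ denote derivatives in the slope variables. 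This is the key identity allowing me to eliminate every third derivative of $u$ that appears in the expansion of $LW$.

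Next I would expand $LW$ with $W=g(x,Du)-\tfrac{K}{2}\sum_{k<n}(u_k-u_k(x_0,t_0))^2$. The chain rule yields
\begin{align*}
L[g(x,Du)] &= g_{p_\ell}Lu_\ell + \tilde F^{ij}\bigl(g_{x_ix_j}+2g_{x_ip_\ell}u_{\ell j}+g_{p_mp_\ell}u_{mi}u_{\ell j}\bigr),\\
L\Bigl[-\tfrac{K}{2}(u_k-u_k(x_0,t_0))^2\Bigr] &= -K(u_k-u_k(x_0,t_0))Lu_k - K\tilde F^{ij}u_{ki}u_{kj},
\end{align*}
so substituting the identity above into each $Lu_\cdot$ factor produces an expression for $LW$ in which every remaining term is of order at most two in $D^2u$, with coefficients controlled by the $C^1$ estimate, by $\|g\|_{C^2}$, and by $\tilde F^{ij}$ or $\tilde F^{p_m}$.

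The algebraic matching then proceeds using the formula
\[
W_i=g_{x_i}+g_{p_\ell}u_{\ell i}-K\sum_{k<n}(u_k-u_k(x_0,t_0))u_{ki}
\]
to rewrite the quadratic combinations $\tilde F^{ij}u_{mi}u_{\ell j}$ in terms of $\tilde F^{ij}W_iW_j$ plus controlled remainders, while linear-in-$D^2u$ terms of the form $\tilde F^{ij}u_{\ell j}$ split into a part absorbed by $|DW|$ and a part absorbed by $\sum_i\tilde F^{ii}$. The negative buffer $-K\tilde F^{ij}u_{ki}u_{kj}$ is what enables this: for $K$ sufficiently large, depending on $\|g\|_{C^2}$, on the $C^1$ bound, and on the ellipticity bounds coming from \eqref{F bounded}, it dominates the cross terms arising from $\tilde F^{ij}g_{p_mp_\ell}u_{mi}u_{\ell j}$ via Cauchy--Schwarz, and the zero-order residues are swept into the constant $1$.

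The main obstacle I anticipate is the careful treatment of $\tilde F^{p_m}$: the matrix in \eqref{matrix tilde} depends on $Du$ through the factor $1/(1-|Du|^2)$, so its $p$-differentiation produces additional lower-order terms not present in the Euclidean treatment \cite{IL2}. Showing that these terms fit into the scheme above relies on the uniform $C^1$ estimate of Section~\ref{subsection C1 estimate} (which keeps $|Du|$ bounded away from~$1$) and on the two-sided bound \eqref{F bounded} on $F$. Once this bookkeeping is settled, the remainder of the estimate follows the pattern of the corresponding inequality (2.4) in \cite{IL2} without structural modification, and yields \eqref{bound LW} for $K$ above a threshold under control.
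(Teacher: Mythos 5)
Your preliminary computations are correct and are indeed the same starting point as the paper's source \cite{IL2}: differentiating the equation to get $Lu_\ell=-\tilde F^{p_m}u_{m\ell}+\dots$, and expanding $L[g(x,Du)]$ and $L[-\tfrac{K}{2}\sum_{k<n}(u_k-u_k(x_0,t_0))^2]$. The gap is in the absorption step, which is the actual content of the lemma, and the mechanism you invoke does not work as stated. The genuinely dangerous terms are not the ones you single out. Substituting your identity into $-K(u_k-u_k(x_0,t_0))Lu_k$ produces $K(u_k-u_k(x_0,t_0))\tilde F^{p_m}u_{mk}$, and $\tilde F^{p_m}$ is \emph{not} lower order: differentiating the coefficients of (\ref{matrix tilde}) in the slope variables leaves the factor $u_{kj}$ in place, contracted against weights that are themselves of first degree in $D^2u$, so these terms are of second (effectively higher) order in $D^2u$ \emph{and} carry the same factor $K$ as your buffer $-K\tilde F^{ij}u_{ki}u_{kj}$. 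Hence no choice of ``$K$ sufficiently large'' lets the buffer dominate them by Cauchy--Schwarz; they must first be rewritten via $K\sum_{k<n}(u_k-u_k(x_0,t_0))u_{ki}=g_{x_i}+g_{p_\ell}u_{\ell i}-W_i$ and then regrouped so that the surviving quadratic danger carries a factor of $DW$. Moreover the buffer only controls second derivatives with a tangential index $k\le n-1$; a term like $\tilde F^{ij}g_{p_np_n}u_{ni}u_{nj}$, which is present for the general smooth $g$ of the statement (in the paper's applications it is moot because $g$ is affine in $p$), cannot be absorbed by it at all, and in the adapted frame the buffer appears with the weight $\sum_{k=1}^{n-1}(\eta^\alpha_k)^2$, which degenerates in the near-normal direction.

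The paper's proof addresses exactly these points: working in a basis $(\tau_\alpha)$ inducing principal directions of $\graph u$, it derives the identity (\ref{expression LW}), in which the good term is $-K\sum_\alpha\tilde\sigma_{1,\alpha}u_{\alpha\alpha}^2\bigl(\sum_{k<n}(\eta^\alpha_k)^2\bigr)$ and the remaining quadratic danger is packaged as $j_1=-2\sum_\alpha\tilde\sigma_{1,\alpha}u_\alpha u_{\alpha\alpha}W_\alpha$, i.e. with an explicit factor $W_\alpha$ --- this is precisely why $\sum_{ij}\tilde F^{ij}W_iW_j$ must appear on the right of (\ref{bound LW}) and why the exponential transform in $\tilde W$ is used afterwards. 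Passing from (\ref{expression LW}) to (\ref{bound LW}), including the treatment of the degenerate weight and of the near-normal direction, is the several-page argument of \cite{Bay}, pp.~19--23 (following \cite{IL2}, Section 3), not a one-line Cauchy--Schwarz in fixed coordinates with $K$ large. Your proposal therefore reproduces the routine part of the computation but leaves a genuine gap at the step the lemma is about; to close it you would need to carry out the regrouping in the adapted frame (or an equivalent algebraic identification of the $W_\alpha$-structure) and the accompanying case analysis.
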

\textit{Sketch of the proof:} following the lines of \cite{IL2} Section 3, we obtain the following expression  for $LW,$ which is analogous to (3.13) in \cite{IL2}:
\begin{equation}\label{expression LW}
LW=-K\sum_{\alpha=1}^n\tilde\sigma_{1,\alpha}u_{\alpha\alpha}^2\left(\sum_{k=1}^{n-1}{\eta^\alpha_k}^2\right)+j_1+j_2+j_3,
\end{equation}
with 
$$j_1=-2\sum_{\alpha=1}^n\tilde\sigma_{1,\alpha}u_{\alpha}u_{\alpha\alpha}W_{\alpha},$$
$$|j_2|\leq C\left(\sum_{\alpha=1}^n\tilde\sigma_{1,\alpha}|u_{\alpha\alpha}|+\sum_{\alpha=1}^n\tilde\sigma_{1,\alpha}\right)\mbox{ and }|j_3|\leq C\left(1+|DW|\right),$$
where $C$ is a constant under control. Here we use the letter $\alpha$ for derivatives in a basis $(\tau_{\alpha})$ of $\R^n$ which induces by the map $x\mapsto(x,u(x,t))$ an orthonormal basis of principal vectors of $\graph u$ at $(x,u(x,t));$ moreover $\tilde\sigma_{1,\alpha}$ denotes a sum of principal values of (\ref{matrix tilde}), and the numbers $\eta^\alpha_k$ are such that $e_k=\sum_{\alpha}\eta^\alpha_k\tau_{\alpha},$ for $k=1,\ldots,n.$ Expression (\ref{expression LW}) is also analogous to (26) in \cite{Bay}. We then follow the arguments in \cite{Bay}, from page 19 to page 23, without modification, and obtain (\ref{bound LW}).$\Box$
\\

\noindent Setting
$$\tilde{W}(x,t):=\exp\left(-C_1g(x_0,Du(x_0,t_0))\right)-\exp(-C_1W)-b|x-x_0|^2,$$
the following holds (see \cite{IL2}, inequality (2.5)) :
\begin{lemma}
If $b$ is sufficiently large,
$$L\tilde{W}\leq C_2(1+|D\tilde{W}|),$$
where $C_2$ is some constant under control.
\end{lemma}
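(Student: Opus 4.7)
The plan is to exploit the standard Ivochkina--Lin trick underlying the very form of $\tilde W$: the exponential factor $-e^{-C_1W}$ is engineered so that the ``bad'' term $\tilde F^{ij}W_iW_j$ appearing in the previous lemma cancels, while the Hessian of $-b|x-x_0|^2$ produces a term $-2b\sum_i\tilde F^{ii}$ that absorbs the remaining positive trace $\sum_i\tilde F^{ii}$ once $b$ is chosen large. The constant shift $\exp(-C_1g(x_0,Du(x_0,t_0)))$ contributes $0$ under $L$ and plays no role in this step (its purpose is only to normalize $\tilde W(x_0,t_0)=0$ for the subsequent barrier argument).

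Concretely I would first apply the chain rule to obtain
$$
L(-e^{-C_1W})=C_1 e^{-C_1W}\,LW-C_1^2 e^{-C_1W}\,\tilde F^{ij}W_iW_j,
$$
and then substitute (\ref{bound LW}). The subtracted $-C_1^2e^{-C_1W}\tilde F^{ij}W_iW_j$ exactly cancels the quadratic gradient piece coming from $C_1 e^{-C_1W}\cdot LW$, leaving
$$
L(-e^{-C_1W})\leq C_1^2 e^{-C_1W}\bigl(1+|DW|+\textstyle\sum_i\tilde F^{ii}\bigr).
$$
Since $L(-b|x-x_0|^2)=-2b\sum_i\tilde F^{ii}$ and $L$ annihilates constants, adding up gives
$$
L\tilde W\leq C_1^2 e^{-C_1W}\bigl(1+|DW|\bigr)+\bigl(C_1^2 e^{-C_1W}-2b\bigr)\textstyle\sum_i\tilde F^{ii}.
$$

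I would then invoke the $C^1$ estimate of Section \ref{subsection C1 estimate} together with the smoothness of $g$ and the boundedness of the tangential derivatives $u_k$ to conclude that $W$ is bounded on $\ol\Omega\times[0,T)$, so that $e^{-C_1W}\leq M$ for a constant $M$ under control. Choosing $b\geq C_1^2M/2$ then renders the coefficient of $\sum_i\tilde F^{ii}$ nonpositive, and ellipticity of $\tilde F$ on admissible solutions lets me discard that term. The identity $D\tilde W=C_1 e^{-C_1W}DW-2b(x-x_0)$ yields $|DW|\leq C_1^{-1}e^{C_1W}\bigl(|D\tilde W|+2b\sup_{\ol\Omega}|x-x_0|\bigr)$; plugging this back into the preceding display produces the desired bound $L\tilde W\leq C_2(1+|D\tilde W|)$, with $C_2$ depending only on $C_1$, $M$, $b$, and the diameter of $\Omega$, all under control. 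The algebraic cancellation is the only genuine content; the rest is bookkeeping, ensuring that each constant traces back to quantities already estimated in Sections \ref{subsection C1 estimate} and \ref{subsection velocity estimate}.
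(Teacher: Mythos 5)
Your proof is correct and is exactly the standard Ivochkina--Ladyzhenskaya computation that the paper itself does not reproduce but delegates to \cite{IL2}, inequality (2.5): the chain rule for $L(-e^{-C_1W})$, the exact cancellation of the $C_1^2e^{-C_1W}\tilde F^{ij}W_iW_j$ terms (which is why the same constant $C_1$ from the previous lemma sits in the exponent), absorption of $\sum_i\tilde F^{ii}$ by $-2b\sum_i\tilde F^{ii}$ for $b$ large, and conversion of $|DW|$ into $|D\tilde W|$ via $D\tilde W=C_1e^{-C_1W}DW-2b(x-x_0)$. Nothing further is needed.
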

We first estimate the mixed second derivatives. Let $e_s$ be a unit vector tangent to $\partial\Omega$ at $x_0,$ and let $\xi$ be the local vector field tangent to the boundary, and spanned by the vector $e_s:$ for all $x\in\ol{\Omega}\cap\ol{B_r}(x_0),$
$$\xi(x):=e_s+\rho_s(x')e_n,$$
where $e_n$ is the inner normal vector of $\Omega$ at $x_0,$ and where, in the splitting $\R^n=T_{x_0}\partial\Omega\oplus\R.e_n,$ $x-x_0=(x',x_n)$ and $\partial\Omega$ is locally the graph of $\rho.$ 

As in \cite{Bay}, we take $g(x,p)=\langle p,\xi(x)\rangle,$ and we define the barrier function
$$v=-a_0|x-x_0|^2-h(d)+\psi(x'),$$
with $h(d)=c_0(1-e^{-b_0d})$ and
$$\begin{array}{r}
\psi(x')=\exp(-C_1{u_0}_{\xi}(x_0))-\exp(-C_1{u_0}_{\xi}(x'))\exp\left(C_1K\sum_{k=1}^{n-1}({u_0}_k(x')-{u_0}_k(x_0))^2\right.\\\left.+2C_1K(|{u_0}_n(x')|^2+1)|D\rho(x')|^2\right).
\end{array}$$
Here $d$ denotes the distance function to the boundary-point $x_0,$ ${u_0}_{\xi}(x)$ denotes $\langle Du_0(x),\xi(x)\rangle,$ and, for a function $f$ defined on $\R^n,$  an expression like $f(x')$ stands for $f(x',\rho(x')).$

We first verify that $v\leq\tilde{W}$ on the parabolic boundary of $\overline{\Omega}\cap \overline{B_r}(x_0)\times [0,T)$:
\\- on $\partial(\overline{\Omega}\cap \overline{B_r})\times[0,T):$ $v\leq\tilde{W}$ on $\partial\Omega\cap\overline{B_r}\times[0,T)$ by the very definition of $\psi,$ and on $\overline{\Omega}\cap\partial\overline{B_r}\times[0,T)$ if $a_0=a_0(r,\sup{|\psi|},\sup|\tilde{W}|)$ is chosen sufficiently large. 
\\- on $\left(\overline{\Omega}\cap \overline{B_r}\right)\times\{0\}:$ we suppose that $x_0=0,$ we fix $x\in \overline{\Omega}\cap \overline{B_r},$ and we consider $\omega(s)=\tilde{W}(sx,0)-v(sx),\ s\in[0,1]$ ($\Omega$ is convex). We have $\omega(0)\geq 0.$ Setting 
$$C=\sup_{\overline{\Omega}\cap \overline{B_r}\times\{0\}}|D\tilde{W}|+\sup_{\overline{\Omega}\cap \overline{B_r}\times\{0\}}|D\psi|,$$ 
we see by a direct computation that, for all $s\in[0,1],$
$$\omega'(s)\geq|x|\{b_0c_0e^{-b_0r}-C\}.$$ 
We thus obtain the property if $c_0=c_0(b_0)$ is chosen large such that 
\begin{equation}\label{condition c_0}
b_0c_0e^{-b_0r}\geq C.
\end{equation}
From the proof of Lemma 4.6. in \cite{Bay}, we see that 
$$Lv> C_2(1+|Dv|)$$
 on $\Omega\cap B_r(x_0)\times (0,T),$ if $h'=b_0c_0e^{-b_0d},-\frac{h''}{h'}=b_0$ are large, which is compatible with (\ref{condition c_0}). The comparison principle implies that $v\leq\tilde{W}$ on the parabolic domain, and, since $v(x_0)=\tilde{W}(x_0,t_0)=0,$ we get
$$v_n(x_0)\leq\tilde{W}_n(x_0,t_0),$$
which gives the estimate
$$u_{s n}(x_0,t_0)\geq C_3,$$
where $C_3$ is a controlled constant. As in \cite{Bay}, to estimate $u_{sn}(x_0,t_0)$ from above we do the same with $g(x,p)=-\langle p,\xi(x)\rangle=-p_s-\rho_s(x')e_n.$
\\

We now estimate the double normal derivatives. A lower bound follows from $H_1[u]>0$ and from the estimates of tangential and mixed second derivatives. To obtain an upper bound, we use a technique of N.S.Trudinger \cite{T2,ILT}. We used this method in \cite {Bay} for the elliptic Dirichlet problem. We define, for $p'\in B(0,1)\subset\R^{n-1}$ and $q'$  a $(n-1)\times (n-1)$ symmetric matrix
$$\tilde{F_1}(p',q')=\sum_{i,j=1}^{n-1}\left(\delta_{ij}+\frac{p'_ip'_j}{1-|p'|^2}\right)q'_{ij}.$$
We denote by $\gamma$ the outward unit normal to $\partial\Omega$ in $\R^n,$ and by $\partial$ the tangential differential operator on $\partial\Omega.$ We fix $T_1\in (0,T),$ and $(y,t_1)\in\partial\Omega\times[0,T_1]$ such that $\tilde{F_1}(\partial u_0,\partial^2 u_0+u_\gamma\partial\gamma)$ reaches its minimum at $(y,t_1).$ As in \cite{Bay} page 27, an upper bound of $u_{nn}$ on $\partial\Omega\times[0,T_1]$ follows from an upper bound of $u_{nn}(y,t_1).$ We keep the notation introduced above, but here adapted to the boundary-point $y.$
We set
$$g(x,p)=\tilde{F_1}(\partial u_0(x'),\partial^2 u_0(x')+\langle p,\gamma(x')\rangle\partial\gamma(x')),$$
and
$$\begin{array}{r}
\tilde{W}(x,t)=\exp[-C_1g(y,Du(y,t_1))]\left\{1-\exp\left[-C_1(g(x,Du(x,t))-g(y,Du(y,t_1)))\right]\right.\\\left.\times\exp\left(C_1\frac{K}{2}\sum_{k=1}^{n-1}(u_k(x,t)-u_k(y,t_1))^2\right)\right\}-b|x-y|^2.
\end{array}$$
We consider the barrier function
$$v=-a_0|x-y|^2-h(d)+\psi(x')$$
with $h(d)=c_0(1-e^{-b_0d})$ and
$$\begin{array}{r}
\psi(x')=\exp(-C_1g(y,Du(y,t_1)))\left\{1-\exp\left(C_1K\sum_{k=1}^{n-1}({u_0}_k(x')-{u_0}_k(y))^2\right.\right.\\\left.\left.+2C_1K(|{u_0}_n(x')|^2+1)|D\rho(x')|^2\right)\right\}.
\end{array}$$
For suitable constants $a_0,b_0,c_0$ under control, we have $v\leq \tilde{W}$ on the parabolic boundary of $\overline{\Omega}\cap\overline{B_r}\times[0,T_1),$ and $Lv> C_2(1+|Dv|)$ in $\overline{\Omega}\cap\overline{B_r}\times[0,T_1)$ (see above the estimate of the mixed derivatives). By the comparison principle and since $v(y)=\tilde{W}(y,t_1)=0,$ we obtain that $v_n(y)\leq\tilde{W}_n(y,t_1),$ which gives 
$$u_{nn}(y,t_1)\leq C_4,$$
where $C_4$ is a constant under control. We finally observe that the bound is independent of $T_1.$
\section{Existence of the entire flow reduced to obtaining local $C^1$ and $C^2$ estimates, and convergence}\label{section entire solutions parabolic}
We suppose here that $F,$ $\ul u,$ $\ol u$ and $u_0$ satisfy the hypotheses of Theorem \ref{entire parabolic problem}: $\ul u,\ol u:\R^n\rightarrow\R$ are the barriers constructed Section \ref{section barriers} ($\ol u$ is a smooth spacelike and strictly convex function with constant mean curvature $h$ and $\ul u$ is the supremum of spacelike functions with constant scalar curvature $H_2=k$) and $u_0:\R^n\rightarrow\R$ is some smooth spacelike and strictly convex function such that
\begin{equation}\label{condition initial data}
\ul u<u_0<\ol u\mbox{ and }1\leq H_2[u_0]\leq k.
\end{equation}
To construct the entire flow, we solve a family of parabolic Dirichlet problems with parabolic boundary data $u_0$ on a growing sequence of balls. We modify slightly the equations near the boundary of the balls such that the problems are compatible on the corner of the parabolic domains. This is done in such a way that the normal velocities are uniformly bounded in terms of $H_2[u_0].$ Moreover the functions $\ul u,\ol u$ are still barriers for the modified evolution equations. This theorem is analogous to Theorem B.4 in \cite{BaySch} for the logarithmic Gauss curvature flow.

\begin{theorem} \label{theorem parabolic Dirichlet problem modified}
Let $R>1$ and the ball of $\R^n,$ $B_R:=\{x\in\R^n:\ |x|<R\}.$ We choose a smooth function $\eta:\overline{B_R}\times\R\rightarrow [0,1]$ such that  $\eta=0$ on $(B_{R-1}\times\R)\cup\graph\ul u\cup\graph\ol u,$ and $\eta=1$ near $\graph {u_0}_{|\partial B_R}.$ We also choose a smooth function $\zeta:[0,+\infty)\rightarrow[0,1]$ such that $\zeta(t)=1$ near $t=0,$ $\zeta(t)=0$ for $t\geq 1,$ and $\zeta'\leq 0.$ We set
\begin{equation}\label{definition hat f}
\hat f(x,u,t):=\eta(x,u)\zeta\left(\frac{t}{\varepsilon}\right)\left[H_2[u_0]^{\frac{1}{2}}-1\right]+1.
\end{equation}
If $\varepsilon>0$ is chosen sufficiently small, the parabolic Dirichlet problem 
\begin{equation}\label{modified parabolic Dirichlet problem}
\left\{\begin{array}{rcl}
-\frac{\dot u}{\sqrt{1-|Du|^2}}+H_2[u]^{\frac{1}{2}}& = & \hat f(x,u,t)\mbox{ in }B_R\times (0,+\infty)\\
u(x,t)& = &u_0(x)\mbox{ on }\partial B_R\times[0,+\infty)\cup B_R\times\{0\},
\end{array}\right.
\end{equation}
has an admissible solution $u:\ol{B_R}\times [0,+\infty)\rightarrow\R$ such that the  normal velocity $\frac{\dot u}{\sqrt{1-|Du|^2}}$ is uniformly bounded in terms of $H_2[u_0].$ Moreover 
\begin{equation}\label{u between barriers2}
\ul u\leq u\leq \ol u
\end{equation}
for all time. 
\end{theorem}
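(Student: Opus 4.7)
The plan is to reduce the construction to Theorem \ref{theorem parabolic Dirichlet problem} applied on $\Omega = B_R$ with source $\hat f$ from (\ref{definition hat f}), then verify separately the barrier inequalities and the velocity estimate. The modification is engineered so that the three obstructions---corner compatibility, barrier preservation, and monotonicity $\hat f_t \le 0$---hold by construction.

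I first verify the hypotheses of Theorem \ref{theorem parabolic Dirichlet problem}. Positivity: $\eta, \zeta \in [0,1]$ and $H_2[u_0]^{1/2} \ge 1$ from (\ref{condition initial data}) give $\hat f \ge 1 > 0$. Monotonicity: $\hat f_t = \eta\,\zeta'(t/\varepsilon)(H_2[u_0]^{1/2}-1)/\varepsilon \le 0$ since $\zeta'\le 0$, $\eta\ge 0$, $H_2[u_0]\ge 1$. The initial inequality reads
$$H_2[u_0]^{1/2}(x) - \hat f(x, u_0(x), 0) = \bigl(1 - \eta(x,u_0(x))\bigr)\bigl(H_2[u_0]^{1/2}(x) - 1\bigr) \ge 0.$$
For compatibility to all orders at $\partial B_R \times \{0\}$, I use that in a neighborhood of this corner $\zeta(t/\varepsilon) \equiv 1$ (for $t$ near $0$) and $\eta(x, u_0(x)) \equiv 1$ (since $(x, u_0(x))$ is near $\graph {u_0}_{|\partial B_R}$), so $\hat f(x, u, t) \equiv H_2[u_0]^{1/2}(x)$ there; the equation reduces locally to one for which $u(x,t) = u_0(x)$ is an exact time-independent solution matching the boundary data, so every $\partial_t^k u$ vanishes on $\partial B_R \times \{0\}$ and all compatibility conditions are fulfilled. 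Theorem \ref{theorem parabolic Dirichlet problem} then provides the required smooth admissible solution $u \in C^\infty(\overline{B_R} \times [0, +\infty))$.

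For the barrier inequalities $\ul u \le u \le \ol u$, the key property of $\eta$ is that it vanishes on $\graph \ul u \cup \graph \ol u$, so that $\hat f(x, \ol u(x), t) = \hat f(x, \ul u(x), t) = 1$. By Mac-Laurin (\ref{mac laurin}) and (\ref{condition h k}), $H_2[\ol u]^{1/2} \le \sqrt{\tfrac{n-1}{2n}}\,h < 1$, making $\ol u$ a strict classical supersolution of the modified equation. Each generator $u_i$ of $\ul u = \sup_i u_i$ satisfies $H_2[u_i]^{1/2} = \sqrt k$ and $\hat f(x, u_i(x), t) \le 1 + (\sqrt k - 1) = \sqrt k$, so that $u_i$ is a classical subsolution. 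Since $\ul u < u_0 = u < \ol u$ on the parabolic boundary by (\ref{condition initial data}), the standard parabolic comparison principle---supplemented by a small downward shift $u_i - \delta$ and the strong maximum principle to rule out degenerate first-order tangential contact---gives $u_i \le u \le \ol u$ for every $i$, whence $\ul u = \sup_i u_i \le u \le \ol u$ throughout.

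Finally, the velocity bound. Using the future unit normal, one computes $\dot u / \sqrt{1 - |Du|^2} = F - \hat f$, where $F = H_2[u]^{1/2}$. At $t = 0$, $F - \hat f = (1 - \eta)(H_2[u_0]^{1/2} - 1) \in [0, \sqrt k - 1]$. Proposition \ref{lemma velocity bound} preserves $F - \hat f \ge 0$ throughout and yields the estimate (\ref{velocity bound}) on $[0, \varepsilon]$; although $C = \sup|\hat f_t| = O(1/\varepsilon)$ and $K \ge \sup|N^\alpha \hat f_\alpha| = O(1)$, the combinations $K\varepsilon$ and $C\varepsilon/K$ remain controlled by $H_2[u_0]$, so choosing $\varepsilon$ small yields a bound on $F - \hat f$ at $t = \varepsilon$ depending only on $H_2[u_0]$. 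For $t \ge \varepsilon$, $\hat f \equiv 1$ and the evolution equation (\ref{evolution F-hat f}) for $F - 1$ loses its $\hat f$-derivative terms, so the maximum principle bounds $F - 1$ by its values at $t = \varepsilon$ and on $\partial B_R \times [\varepsilon, \infty)$, the latter being controlled since $u = u_0$ there together with the boundary $C^2$ estimates from the proof of Theorem \ref{theorem parabolic Dirichlet problem}. The main obstacle is precisely this last step: carefully tracking how the exponential factor $e^{K\varepsilon}$ in Proposition \ref{lemma velocity bound} is tamed by the smallness of $\varepsilon$, so that the final bound is uniform in time and depends only on $H_2[u_0]$.
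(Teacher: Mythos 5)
Your proposal follows the paper's proof in all essentials: verification of the hypotheses of Theorem \ref{theorem parabolic Dirichlet problem} (including the computation $H_2[u_0]^{1/2}-\hat f_{|t=0}=(1-\eta)(H_2[u_0]^{1/2}-1)\geq 0$), barrier comparison, and the velocity bound via Proposition \ref{lemma velocity bound} on $[0,\varepsilon]$ with $\varepsilon\sim 1/K$ followed by the clean evolution equation for $t\geq\varepsilon$. Two points deserve comment. First, for the upper barrier the paper does not compare $u$ and $\ol u$ through the scalar curvature operator at all: it converts the information via Mac-Laurin into the inequality $P_1[u]\geq 1\geq c(n)h=P_1[\ol u]$ and applies the comparison principle for the quasilinear mean curvature operator $P_1$, which sidesteps any admissibility issues along the segment joining $D^2u$ and $D^2\ol u$. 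Your direct comparison via $H_2[\ol u]^{1/2}<1\leq\hat f$ can be made to work (both functions are admissible and the G\aa rding cone is convex), but you should say so; also note that what is actually used for both barriers is the uniform bound $1\leq\hat f\leq k^{1/2}$ valid for \emph{all} arguments, not the vanishing of $\eta$ on $\graph\ul u\cup\graph\ol u$ --- otherwise the $u$-dependence of $\hat f$, which has no monotonicity, would obstruct the comparison.

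The one genuine misstep is in the velocity estimate for $t\geq\varepsilon$. You propose to control $F-1$ on the lateral boundary $\partial B_R\times[\varepsilon,+\infty)$ by the boundary $C^2$ estimates from the proof of Theorem \ref{theorem parabolic Dirichlet problem}. Those estimates depend on $R$, $\Omega$, and the full boundary data, so the resulting bound on the normal velocity would not be ``uniformly bounded in terms of $H_2[u_0]$'' --- and this uniformity in $R$ is precisely what the theorem must deliver, since it is fed into the limit $R\to+\infty$ in Section \ref{section entire solutions parabolic}. The correct observation, which the paper uses implicitly here and explicitly in the proof of Proposition \ref{lemma velocity bound}, is that the equation gives $F-\hat f=\dot u/\sqrt{1-|Du|^2}$, and $u\equiv u_0$ on $\partial B_R\times[0,+\infty)$ forces $\dot u=0$ there; hence $F-\hat f$ vanishes identically on the lateral boundary, and the maximum principle for $t\geq\varepsilon$ yields $0\leq F-1\leq\sup_{t=\varepsilon}(F-1)$ with no boundary contribution at all. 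With that replacement your argument gives exactly the paper's conclusion.
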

\noindent Observe that $u$ solution of (\ref{modified parabolic Dirichlet problem}) is also solution of (\ref{entire parabolic equation}) on $B_{R-1}\times[0,+\infty).$ We also write problem (\ref{modified parabolic Dirichlet problem}) in the equivalent form:
\begin{equation}\label{modified parabolic Dirichlet problem X}
\left\{\begin{array}{rcl}
\dot X&=&(F-\hat f)N\\
X&=&X_0\mbox{ on }\partial \Sigma_0^R\times[0,+\infty)\cup\Sigma_0^R\times\{0\},
\end{array}\right.
\end{equation}
where $\hat f$ is given by (\ref{definition hat f}), $\Sigma_0^R=\graph_{\ol B_R} u_0$ and $X:\Sigma_0^R\times[0,+\infty)\rightarrow\R^{n,1}$ is the embedding function.
\begin{proof}
The existence of a solution follows from Theorem \ref{theorem parabolic Dirichlet problem}: $\hat f_t\leq 0$ holds, the compatibility conditions are fulfilled on the corner of the parabolic domain, and, observing that
\begin{equation}
\hat f_{|t=0}=\eta H_2[u_0]^{\frac{1}{2}}+(1-\eta)1\leq H_2[u_0]^{\frac{1}{2}},
\end{equation}
we get  $(H_2^{\frac{1}{2}}-\hat f)_{|t=0}\geq 0.$ 

We now prove (\ref{u between barriers2}). We consider  the operators $P_1,P_2$ defined in (\ref{definition P1 P2}). We recall that $\ul u$ is of the form $\ul u=\sup_{i\in I}u_i,$ where for all $i\in I$ the function $u_i$ is spacelike with $H_2[u_i]=k.$ Defining $u_i(x,t):=u_i(x),$ we have $P_2[u_i]=H_2[u_i]^{\frac{1}{2}}=k^{\frac{1}{2}}.$ Moreover, from (\ref{definition hat f})-(\ref{condition initial data}), we have $P_2[u]=\hat f\leq k^{\frac{1}{2}}.$ Thus, since $u\geq u_i$ on the parabolic boundary, we get by the maximum principle that $u\geq u_i$ during the evolution, and finally that $u\geq\ul u.$ Now, setting $\ol u(x,t):=\ol u(x),$ we get
\begin{equation}\label{P1 upper barrier}
P_1[\ol u]=c(n)h.
\end{equation}
Moreover, since $\hat f\geq 1$ and recalling (\ref{mac laurin}), we have
\begin{equation}\label{P1 u}
P_1[u]=\hat f+c(n)H_1[u]-H_2[u]^{\frac{1}{2}}\geq 1.
\end{equation}
Since $c(n)h\leq 1$ (see Proposition \ref{proposition intro barriers}), (\ref{P1 upper barrier}) and (\ref{P1 u}) imply that $P_1[u]\geq P_1[\ol u],$ and thus that $u\leq\ol u$ since this inequality also holds on the parabolic boundary. 

We now estimate the normal velocity.  We consider $X:\Sigma_0^R\times[0,+\infty)\rightarrow\R^{n,1}$ solution of (\ref{modified parabolic Dirichlet problem X}). We first suppose that $t\in [0,\epsilon].$ We note that $|\hat f_t|\leq \frac{C_0}{\epsilon}$ where $C_0$ is a constant controlled in terms of $H_2[u_0];$ recalling inequality (\ref{velocity bound}) in Proposition \ref{lemma velocity bound}, we get 
$$0\leq F-\hat f\leq e^{Kt}\sup_{t=0}(F-\hat f)+\frac{C_0}{\epsilon K}(e^{Kt}-1),$$
where $K$ is a bound of $|N^{\alpha}\hat f_{\alpha}|.$ Observe that $K$ may grow with $R$ but is independent of $\epsilon$ (see Propositions \ref{C1 estimate maximum principle} and \ref{C1 estimate boundary}).
Taking $\epsilon=\frac{1}{K},$ we get the estimate: for all $t\in[0,\epsilon],$
\begin{equation}\label{velocity estimate t smaller epsilon}
0\leq F-\hat f\leq C\left(\sup_{t=0}(F-\hat f),C_0\right),
\end{equation}
where the right-hand side term is a constant bounded in terms of $H_2[u_0].$ For $t\geq\epsilon$ the normal velocity fulfills
$$\dt (F-\hat f)-F^{ij}(F-\hat f)_{ij}=-(F-\hat f)F^{ij}h_{ik}h^k{j},$$
and the maximum principle implies that
\begin{equation}\label{velocity estimate t larger epsilon}
0\leq F-\hat f\leq \sup_{t=\epsilon}(F-\hat f).
\end{equation}
Estimates (\ref{velocity estimate t smaller epsilon}) and (\ref{velocity estimate t larger epsilon}) imply the estimate of the normal velocity during the evolution in terms of the curvature $H_2[u_0]$ only.
\end{proof}
We set $u_R$ for a solution of the parabolic Dirichlet problem (\ref{modified parabolic Dirichlet problem}) on $\ol B_R\times[0,+\infty),$ $R>1.$ By construction, the normal velocity of the solution $u_R$ is bounded by a constant which is independent of $R.$ Suppose that the following estimates hold: for any $R_0\geq 0,$ there exists $R_1=R_1(R_0)$ larger than $R_0,$ $\theta\in(0,1)$ and $C\geq 0$ such that, for every $R\geq R_1,$
\begin{equation}\label{a priori estimates entire parabolic}
\sup_{\ol B_{R_0}\times[0,+\infty)}|Du_R|\leq 1-\theta,\ \sup_{\ol B_{R_0}\times[0,+\infty)}|u_R|+|D^2u_R|\leq C.
\end{equation}
Then, higher order derivative estimates (due to Krylov, Safonov, and Schauder for positive times) imply that a subsequence converges in
$$C^{\infty}(\R^n\times(0,+\infty))\cap C^{1,\alpha;0,\frac{\alpha}{2}}(\R^n\times[0,+\infty))$$
for every $0<\alpha<1$ to a solution $u\in C^{\infty}(\R^n\times(0,+\infty))\cap C^{1,1;0,1}(\R^n\times[0,+\infty))$ of (\ref{entire parabolic equation}).

We now prove the convergence of the entire flow $u$, following \cite{Ge1,E}. We first note that $H_2[u]^{\frac{1}{2}}\geq 1$ during the evolution, since this property holds for the solutions of the Dirichlet problems (\ref{modified parabolic Dirichlet problem}). We deduce that $\dot u\geq 0.$ Thus, for every $x\in\R^n,$ $t\mapsto u(x,t)$ is an increasing function, bounded above by $\ol u(x),$ and
$$u_{\infty}(x):=\lim_{t\rightarrow+\infty}u(x,t)$$
is well defined. The a priori estimates, locally uniform in space and uniform in time, show that $u_{\infty}$ is a smooth and spacelike function, and that $u(.,t)$ smoothly converges to $u_{\infty}$ as $t$ tends to $+\infty.$ We now fix $x\in\R^n.$ Since $\dot u\geq 0$ and $\sup_{t\in[0,+\infty)}|Du(x,t)|<1,$ we get from (\ref{entire parabolic equation}) that 
$$0\leq H_2[u]^{\frac{1}{2}}(x,t)-1\leq c(x)\dot u(x,t),$$
where $c(x)\in\R$ does not depend on $t.$ Thus
$$\int_0^{+\infty}[H_2[u]^{\frac{1}{2}}(x,t)-1]dt<+\infty,$$
and there exists a sequence $(t_n)$ such that $t_n\rightarrow+\infty$ and $H_2[u](x,t_n)\rightarrow 1$ as $n$ tends to infinity. Thus $u_{\infty}$ is a solution of (\ref{entire elliptic equation}).

In the rest of the article, we prove the local estimates (\ref{a priori estimates entire parabolic}).

\section{The local $C^1$ estimate}\label{section local C1 estimate}
We obtain here the local $C^1$ estimate in the spirit of R.Bartnik's estimate concerning hypersurfaces of prescribed mean curvature. See e.g. \cite{Btn}.
\subsection{The estimate with a general time function}$ $
We suppose that $X:\Sigma_0\times[0,+\infty)\rightarrow\R$ is a solution of (\ref{parabolic Dirichlet problem X}) with $\hat f\equiv 1.$ Let $\tau:\R^{n,1}\rightarrow \R$ be a smooth function whose lorentzian gradient $D\tau$ is a timelike and past oriented field ($\tau$ is a \textit{time function} on $\R^{n,1}$). We set $\alpha^{-2}=-\langle D\tau,D\tau\rangle$ and $T=-\alpha D\tau.$ Using the timelike vector field $T,$ we define a Riemannian metric on $\R^{n,1}$ by
$$|Y|_T^2:=\langle Y,Y\rangle+2\langle Y,T\rangle^2.$$
If $S$ is a tensor field on $\R^{n,1},$ we denote by $|S|_T$ its norm with respect to the metric $|.|_T.$ For instance, if $Y$ is a vector field on $\R^{n,1},$
$$|DY|_T=\left(\sum_{i,j=1}^{n+1}\langle D_{u_i}Y,u_j\rangle^2\right)^{\frac{1}{2}}$$
where $(u_1,\ldots,u_n,u_{n+1})$ is an orthonormal basis of $\R^{n,1}$ such that $u_{n+1}=T.$ We fix $\tau_0>0,$ and we suppose that, for every $t\geq 0,$ 
\begin{equation}\label{definition Sigma Tau0}
\Sigma_{t,\tau\geq\tau_0}:=\left\{X(x,t):\ x\in\Sigma_0,\ \tau(X(x,t))\geq\tau_0\right\}
\end{equation}
is a compact (spacelike) hypersurface such that $\tau=\tau_0$ on its boundary, and that the set $\Sigma_{t,\tau\geq 2\tau_0}$ is non-empty. We moreover suppose that $\tau,\alpha,\alpha^{-1},$ $|D\alpha|_T,$ $|DT|_T$ and $|D^2T|_T$ are bounded on the region 
\begin{equation}\label{def Dtau0}
D_{\tau_0}:=\bigcup_{t\geq 0}\Sigma_{t,\tau\geq\tau_0}
\end{equation}
of $\R^{n,1},$ and we fix $\tau_1$ such that $\tau_1\geq\tau$ on $D_{\tau_0}.$
Finally, we assume that $F$ has positive lower and upper bounds on $D_{\tau_0}.$ Setting here $\nu:=-\langle T,N\rangle,$ we prove the following estimate:
\begin{theorem} \label{theorem C1 interior estimate}
There exists a constant $C$ such that, for every $t\geq 0,$ 
\begin{equation}\label{local estimate nu}
\sup_{\Sigma_{t,\tau\geq 2\tau_0}}\nu\leq C.
\end{equation}
The constant $C$ depends on $\tau_0,\tau_1,$ upper bounds of  $\alpha,\alpha^{-1},|D\alpha|_T,$ $|DT|_T$ and $|D^2T|_T$ on the set $D_{\tau_0},$ on lower and upper bounds of $F$ on $D_{\tau_0},$ and on $\sup_{\Sigma_0,\tau\geq\tau_0}\nu.$
\end{theorem}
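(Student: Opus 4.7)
The plan is to apply the parabolic maximum principle to the test function
$$W := \phi(\tau)\,\nu,$$
where $\phi\colon\R\to[0,+\infty)$ is a smooth cutoff vanishing on $\{\tau\le\tau_0\}$, strictly positive on $\{\tau>\tau_0\}$, and bounded below by some $c_0>0$ on $[2\tau_0,\tau_1]$; concretely one may take a smooth truncation of $(\tau-\tau_0)_+^2$. Since $W\equiv 0$ on the lateral portion $\{\tau=\tau_0\}$ of $\partial D_{\tau_0}$ and $W\leq\phi(\tau_1)\sup_{\Sigma_0,\,\tau\geq\tau_0}\nu$ at $t=0$, for every finite $T>0$ it suffices to bound $W$ at an arbitrary interior maximum on the space-time region where $\tau\geq\tau_0$ and $t\in[0,T]$.

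First I would derive the evolution equations of $\tau$ and $\nu=-\langle T,N\rangle$ along the flow $\dot X=(F-1)N$. Applying the Gauss formula to the restrictions of $\tau$ and $T$ to $\Sigma_t$, in parallel with (\ref{evolution height u}) and (\ref{evolution nu time xn+1}), yields
\begin{equation*}
\dot\tau-F^{ij}\tau_{;ij}=(F-1)\langle D\tau,N\rangle-F^{ij}D^2\tau(e_i,e_j)-\langle D\tau,N\rangle\,F^{ij}h_{ij},
\end{equation*}
\begin{equation*}
\dot\nu-F^{ij}\nu_{;ij}=-\nu\,F^{ij}h_{ik}h^k_{\,j}+R,
\end{equation*}
where $R$ gathers ambient terms polynomial in $D\alpha$, $DT$, $D^2T$ contracted against $F^{ij}$ and tangent/normal vectors. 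The hypotheses on $\alpha$, $\alpha^{-1}$, $|D\alpha|_T$, $|DT|_T$, $|D^2T|_T$, combined with the two-sided bound on $F$, give $|R|\leq C(1+\nu)(1+\sum_i F^{ii})$ on $D_{\tau_0}$.

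At an interior maximum $(x_0,t_0)$ of $W$ with $\tau(X(x_0,t_0))>\tau_0$, the first-order condition $dW=0$ forces $\nabla\nu=-(\phi'/\phi)\,\nu\,\nabla\tau$, which lets me eliminate the cross term $F^{ij}\tau_{;i}\nu_{;j}$ in $F^{ij}W_{;ij}$ in favour of $-(\phi'/\phi)\nu F^{ij}\tau_{;i}\tau_{;j}$. Combining the maximum-principle inequality $\dot W-F^{ij}W_{;ij}\geq 0$ with the two evolution equations and dividing by $\phi\nu>0$ then yields a schematic inequality of the form
\begin{equation*}
F^{ij}h_{ik}h^k_{\,j}\leq C_1\,\frac{|\phi'|}{\phi}|\langle D\tau,N\rangle|\,|F^{ij}h_{ij}|+C_2\Bigl(\tfrac{(\phi')^2}{\phi^2}+\tfrac{|\phi''|}{\phi}\Bigr)F^{ij}\tau_{;i}\tau_{;j}+C_3\Bigl(1+\sum_i F^{ii}\Bigr),
\end{equation*}
with constants $C_1,C_2,C_3$ under control.

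The main obstacle is absorbing the first term on the right, which is linear in the curvature and grows like $\nu$ (since $|\langle D\tau,N\rangle|$ is comparable to $\nu/\alpha$). For this I would use the Cauchy--Schwarz-type bound $(F^{ij}h_{ij})^2\leq(\sum_k F^{kk})F^{ij}h_{ik}h^k_{\,j}$ together with Young's inequality, absorbing half of the negative quadratic term at the price of a controlled multiple of $\nu^2\sum_k F^{kk}$. A cutoff $\phi$ chosen so that $(\phi')^2/\phi$ remains bounded keeps this coefficient under control, reducing the inequality at the maximum to $\nu^2\leq C'$ for some constant $C'$ depending only on the stated data. This bounds $W$ uniformly in $T$, and restricting to $\Sigma_{t,\tau\geq 2\tau_0}$, where $\phi\geq c_0$, yields (\ref{local estimate nu}).
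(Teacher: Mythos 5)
Your argument breaks down at the final absorption step, and the failure is structural rather than technical. After you divide by $\phi\nu$, every occurrence of $\nu$ sits on the right-hand side of your schematic inequality, while the left-hand side $F^{ij}h_{ik}h^k_{\,j}$ involves only the curvature. Since $F$ (hence $\sigma_2$) is bounded above and below, nothing forbids the curvature from staying bounded while $\nu$ is arbitrarily large (think of an almost flat, nearly null graph: all $\lambda_i\simeq 0$, $\nu$ huge), so the inequality you arrive at is satisfied for all large $\nu$ and the claimed conclusion ``$\nu^2\le C'$'' is a non sequitur; no Cauchy--Schwarz/Young manipulation can create a term on the left that grows with $\nu$. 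Two secondary points: the term you single out as the main obstacle, $F^{ij}h_{ij}$, equals $F$ by homogeneity and is already under control, whereas your bound $|R|\le C(1+\nu)(1+\sum_iF^{ii})$ is wrong --- the evolution of $\nu$ (see Lemma \ref{lemma new evolution equation nu}) contains the curvature-linear term $2F^{ij}\langle D_{e_i}T,D_{e_j}N\rangle$ and a $D^2T$ term of size $\sigma_1\nu^3$, which is exactly what makes the naive maximum principle insufficient.

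What the paper does, and what your proposal is missing, is Bartnik's trick: the first-order condition at the maximum is not used merely to eliminate a cross term, but to produce curvature. Writing $\nabla\nu=\alpha S(\nabla\tau)-Z$ with $|Z|\le C_0\nu^2$ (identity (\ref{gradient nu function S}) and Lemma \ref{lemma norm Z}), and taking the cutoff $\eta=(\tau-\tau_0)^K$ with $K$ \emph{large}, the extremum condition $\nabla\nu/\nu=-K\nabla\tau/(\tau-\tau_0)$ forces the smallest principal curvature to satisfy $\lambda_n\le-\tfrac12\alpha^{-1}K\nu/(\tau-\tau_0)$ (Lemma \ref{lower bound lambda_n}). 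Combined with the structural inequality for the $\sigma_2^{1/2}$ operator, $F^{ij}h_j^kh_{ki}-\nu^{-2}F^{ij}h_i^kt_kh_j^lt_l\ge\varepsilon\,\sigma_{1,n}\lambda_n^2$ (Lemma \ref{key inequality epsilon}), this yields a good term of size $\sigma_1K^2\nu^2/(\tau-\tau_0)^2$, which for $K$ large under control dominates the cutoff contribution, the $\sigma_1\nu^2$ and $\sigma_1\nu^3$ ambient terms, and the curvature-linear term (handled by Lemma \ref{estimate sigma 1,i lambda i}), giving the contradiction with $\nu>\nu_0$. Note that your requirement that $(\phi')^2/\phi$ stay bounded goes in exactly the opposite direction: the largeness of the logarithmic derivative of the cutoff is precisely the mechanism that generates the large negative principal curvature on which the whole estimate rests.
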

This estimate relies on the maximum principle applied to the function $\varphi=\eta\nu,$ where $\eta=(\tau-\tau_0)^K$ with $K$ a large constant. We fix $T_1\in(0,+\infty),$ and we consider $\varphi$ on the compact set
$$J=\{(x,t)\in\Sigma_0\times[0,T_1]:\ \tau(X(x,t))\geq\tau_0\}.$$
The function $\varphi$ reaches its maximum at a point $(x_0,t_0)\in J.$ Our purpose is to estimate $\varphi(x_0,t_0)$ by a constant under control which does not depend on $T_1;$ such an estimate clearly implies (\ref{local estimate nu}). If $t_0=0,$ we readily get that $\varphi(x_0,t_0)\leq C'$ where $C'=C'(K,\tau_0,\tau_1,\Sigma_{0,\tau\geq \tau_0})$ is a constant, and the result follows. We thus suppose that $t_0>0,$ and we prove the following proposition, which implies the estimate of $\varphi(x_0,t_0),$ and the theorem:
\begin{proposition} Let $\nu_0>1.$   If $K$ is sufficiently large, 
$$\nu(x_0,t_0)\leq\nu_0.$$
The constant $K$ depends on $\nu_0,$  $\tau_0,\tau_1,$ and on bounds on  $\alpha,\alpha^{-1},|DT|_T,$ $|D^2T|_T$ and $F$ on the region $D_{\tau_0}.$
\end{proposition}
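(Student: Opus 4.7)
The plan is to apply the parabolic maximum principle to $\log\varphi = K\log(\tau-\tau_0) + \log\nu$ at the interior maximum $(x_0,t_0)$, in the same spirit as the computation already carried out in the proof of Proposition~\ref{C1 estimate maximum principle}. Writing $\sigma := \tau - \tau_0 > 0$, the inequality $\dt\log\varphi - F^{ij}(\log\varphi)_{ij} \geq 0$ combined with the first-order optimality condition $\nabla\nu/\nu = -K\nabla\tau/\sigma$ (which in particular yields $F^{ij}\nu_i\nu_j/\nu^2 = K^2 F^{ij}\tau_i\tau_j/\sigma^2$) produces
$$
0 \leq K\,\frac{\dot\tau - F^{ij}\tau_{;ij}}{\sigma} + K(K+1)\,\frac{F^{ij}\tau_i\tau_j}{\sigma^2} + \frac{\dot\nu - F^{ij}\nu_{ij}}{\nu}.
$$

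First I would compute the two evolution equations under the flow $\dot X = (F-1)N$. For $\tau$, using $\dot\tau = (F-1)\nu/\alpha$, the Gauss formula, and the homogeneity identity $F^{ij}h_{ij} = F$, one obtains
$$
\dot\tau - F^{ij}\tau_{;ij} = \frac{(2F-1)\nu}{\alpha} - F^{ij}(D^2\tau)(X_i,X_j).
$$
For $\nu = -\langle T, N\rangle$, a computation analogous to the derivation of~(\ref{evolution nu time xn+1}), but now with the non-parallel timelike field $T = -\alpha D\tau$ in place of $e_{n+1}$, gives
$$
\dot\nu - F^{ij}\nu_{ij} = -\nu F^{ij}h_{ik}h^k_j + \mathcal{R},
$$
where $\mathcal{R}$ is an error term controlled on $D_{\tau_0}$ by a combination of $\nu$, $\alpha$, $\alpha^{-1}$, $|DT|_T$, $|D^2T|_T$, $F$, and $\sum_i F^{ii}$, each of which has an a priori bound.

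Next I would exploit the orthogonal decomposition $T = \nu N + T^{\text{tan}}$, which yields $|T^{\text{tan}}|^2 = \nu^2 - 1$ and, in a principal frame for $h$ at $(x_0,t_0)$, the identities $\tau_i = -(T^{\text{tan}})_i/\alpha$ and $\nu_i = -\alpha\lambda_i\tau_i + O(|DT|_T\,\nu)$. Comparing with the optimality relation $\nu_i = -K\nu\tau_i/\sigma$ forces $\alpha\lambda_i \simeq K\nu/\sigma$ in every direction where $\tau_i$ is not negligible. The homogeneity identity $\sum_i F^{ii}\lambda_i = F$ then constrains the $F^{ii}$ on these \emph{active} directions, and the Cauchy--Schwarz lower bound $F^{ij}h_{ik}h^k_j \geq F^2/\sum_i F^{ii}$ produces a lower bound on $\nu F^{ij}h_{ik}h^k_j$ of order $K\nu^2/\sigma$, which dominates the dangerous term $K^2 F^{ij}\tau_i\tau_j/\sigma^2$. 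Rearranging, one concludes that if $\nu(x_0,t_0) > \nu_0$, then $K$ must be bounded by a constant determined by $\nu_0$ and the a priori bounds, contradicting the choice of $K$ large.

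The main obstacle is the careful bookkeeping in the error term $\mathcal{R}$ and in the principal-frame expansion of $\nu_i$, because the contributions from $DT$ and $D^2T$ carry their own $\nu$-dependence that must not overwhelm the gain. The core mechanism --- namely, that the maximum principle for $\eta\nu$ forces the active principal curvatures to be of order $K\nu/\sigma$, so that the Cauchy--Schwarz lower bound on $F^{ij}h_{ik}h^k_j$ beats the quadratic-in-$K$ gradient term --- is the analogue for the scalar curvature operator of R.~Bartnik's classical local $C^1$ estimate for the prescribed mean curvature equation.
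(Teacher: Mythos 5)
Your framework is the paper's: maximum principle for $\varphi=(\tau-\tau_0)^K\nu$, the decomposition $T=\nu N+T^{||}$ with $|T^{||}|^2=\nu^2-1$, and the extraction of curvature information from the extremum condition $\nabla\nu/\nu=-K\nabla\tau/(\tau-\tau_0)$. But the mechanism you invoke to close the argument does not work. The dangerous term is $K^2F^{ij}\tau_i\tau_j/\sigma^2$, which is of order $K^2\sigma_1\nu^2/(F\sigma^2)$; the Cauchy--Schwarz bound $F^{ij}h_{ik}h^k_j\geq F^2/\sum_iF^{ii}$ is independent of $K$ and of order $F^3/\sigma_1$, so it cannot dominate, and your claim that it yields a lower bound ``of order $K\nu^2/\sigma$'' is unsubstantiated. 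What actually saves the argument is a near-cancellation: substituting $\nu_i=-h_i^kt_k-Z_i$ (with $|Z|\leq C_0\nu^2$, Lemma \ref{lemma norm Z}), the combination $F^{ij}h_j^kh_{ki}-\frac{1}{\nu^2}F^{ij}\nu_i\nu_j$ contains $F^{ij}h_j^kh_{ki}-\frac{1}{\nu^2}F^{ij}h_i^kt_kh_j^lt_l$, which the structural inequality of Lemma \ref{key inequality epsilon} bounds below by $\varepsilon\sigma_{1,n}\lambda_n^2$; combined with Lemma \ref{lower bound lambda_n}, which shows the \emph{smallest} principal curvature satisfies $\lambda_n\leq-\frac{1}{2}\alpha^{-1}K\nu/(\tau-\tau_0)$ (note your ``$\alpha\lambda_i\simeq K\nu/\sigma$'' has the wrong sign: the quadratic form $\langle S(\nabla\tau),\nabla\tau\rangle$ is forced to be very \emph{negative}, and only the least eigenvalue is controlled, not each active direction), this produces a good term of order $\sigma_1K^2\nu^2/\sigma^2$ that then beats the remaining terms, which are only \emph{linear} in $K$. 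This is also where $\nu_0>1$ enters, through the quotient $\nu^2/(\nu^2-1)$ in condition (\ref{inequality K}).

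A second genuine gap is your treatment of $\mathcal{R}$. It is not ``controlled by $\nu,\alpha,\alpha^{-1},|DT|_T,|D^2T|_T,F$ and $\sum_iF^{ii}$'': the evolution equation (\ref{new evolution equation nu}) contains $2F^{ij}\langle D_{e_i}T,D_{e_j}N\rangle$, which is linear in the second fundamental form (since $D_{e_j}N=h_j^ke_k$) and has no a priori bound. The paper controls it via estimate (\ref{estimate3}) and Lemma \ref{estimate sigma 1,i lambda i}, i.e.\ the inequality $\sigma_{1,i}|\lambda_i|\leq\sigma_2+\frac{\beta}{\nu}\sigma_{1,n}\lambda_n^2$ with $\beta$ arbitrarily small for $K$ large, which again relies on the largeness of $|\lambda_n|$ and on the specific algebra of $\sigma_2$. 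Without these two ingredients --- the key inequality of Lemma \ref{key inequality epsilon} and the balancing of the curvature-linear error terms --- your outline does not yield a contradiction.
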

In the following, we suppose that $\nu(x_0,t_0)>\nu_0,$ and we will obtain a contradiction if $K$ is large under control. Let us denote by $T^{||}$ the orthogonal projection of $T$ on the tangent space of $\Sigma_{t_0}.$ By definition of $T,$ we have $T^{||}=-\alpha\nabla\tau,$ and
\begin{equation}\label{norm gradient tau}
\nu^2-1=|T^{||}|^2=\alpha^2|\nabla\tau|^2.
\end{equation}
Let  $Z$ be the tangent vector field of $\Sigma_{t_0}$ such that $\langle Z,h\rangle=\langle N,D_hT\rangle,$ for all tangent vector $h.$ By a direct computation, 
\begin{equation}\label{gradient nu function S}
\nabla \nu=-S(T^{||})-Z=\alpha S(\nabla\tau)-Z.
\end{equation}
Here $S$ stands for the curvature endomorphism  of $\Sigma_{t_0}.$ We first estimate $|Z|$: 
\begin{lemma} \label{lemma norm Z} We have
\begin{equation}\label{norm Z}
|Z|\leq C_0\nu^2,
\end{equation}
where $C_0$ only depends on an upper bound of $|DT|_T.$
\end{lemma}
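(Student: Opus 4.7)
I aim to bound $|Z|^2=\sum_{i=1}^n\langle N,D_{e_i}T\rangle^2$ for a Minkowski-orthonormal basis $(e_1,\dots,e_n)$ of $T_{X(x_0,t_0)}\Sigma_{t_0}$ by applying Cauchy--Schwarz in the positive-definite ``Wick'' metric $|\cdot|_T$, with the norm $|DT|_T$ supplying the bound on the operator $DT$.

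The initial observation is that $T=-\alpha D\tau$ is Minkowski-unit: from $\alpha^{-2}=-\langle D\tau,D\tau\rangle$ one gets $\langle T,T\rangle=\alpha^{2}\langle D\tau,D\tau\rangle=-1$, hence $\langle D_vT,T\rangle=\frac12 v(\langle T,T\rangle)=0$ for every vector $v$. This makes the Minkowski pairing $\langle N,D_{e_i}T\rangle$ equal to the bilinear form $\langle N,D_{e_i}T\rangle+2\langle N,T\rangle\langle D_{e_i}T,T\rangle$ associated to $|\cdot|_T$, because the correction term vanishes. Cauchy--Schwarz in that positive-definite form then gives
\[
\langle N,D_{e_i}T\rangle^2 \leq |N|_T^2\,|D_{e_i}T|_T^2 \leq |N|_T^2\,|e_i|_T^2\,|DT|_T^2,
\]
the last step using that the Frobenius-type quantity $|DT|_T$ dominates the operator norm of $DT$ with respect to $|\cdot|_T$ (this follows by writing $D_vT=\sum_a v^a D_{u_a}T$ in the basis $(u_a)$ defining $|DT|_T$ and applying the scalar Cauchy--Schwarz).

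Finally I would compute each factor directly from the definition of $|\cdot|_T$:
\[
|N|_T^2=\langle N,N\rangle+2\langle N,T\rangle^2=-1+2\nu^2\leq 2\nu^2,\qquad |e_i|_T^2=1+2\langle e_i,T\rangle^2,
\]
whose sum over $i$ equals $n+2|T^{||}|^2=n+2(\nu^2-1)$ by (\ref{norm gradient tau}). Summing the Cauchy--Schwarz bound over $i$ then yields
\[
|Z|^2\leq 2\nu^2\,|DT|_T^2\,\bigl(n+2(\nu^2-1)\bigr)\leq C(n)\,|DT|_T^2\,\nu^4,
\]
which is the desired estimate, with $C_0$ depending only on $n$ and on an upper bound of $|DT|_T$. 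There is no real obstacle here; the one conceptually important step is the switch from the indefinite Minkowski inner product to the positive-definite metric $|\cdot|_T$, which is legitimate precisely because $\langle D_vT,T\rangle=0$.
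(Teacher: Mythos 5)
Your proof is correct and follows essentially the same route as the paper's: both exploit $\langle D_vT,T\rangle=0$ to pass to the positive-definite metric $|\cdot|_T$, and both quantify the comparison between $|\cdot|_T$ and the induced metric via $|T^{||}|^2=\nu^2-1$ (the paper through its inequality $|h|_T\leq\sqrt2\,\nu|h|$ for tangent vectors, you through $|N|_T^2\leq 2\nu^2$ and $\sum_i|e_i|_T^2=n+2(\nu^2-1)$). The only cosmetic difference is that your constant picks up a harmless dependence on $n$, whereas the paper's sharper bookkeeping yields $|Z|\leq\sqrt2\,\nu^2|DT|_T$.
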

\begin{proof}
Let $u_1,\ldots,u_n,u_{n+1}$ be an orthonormal basis such that $u_{n+1}=T.$
Writing $N=\sum_{j=1}^n\alpha_ju_j+\nu T,$ we have $\sum_{j\leq n}\alpha_j^2=\nu^2-1.$ Let $h:=\sum_{i=1}^{n+1}h_iu_i$ be a vector tangent to $\Sigma_{t_0}.$ Since $\langle T,D_hT\rangle=0,$ 
\begin{equation}\label{estimate N DT}
|\langle N,D_hT\rangle|\leq\sum_{i=1}^{n+1}\sum_{j=1}^{n}|h_i\alpha_j\langle u_j,D_{u_i}T\rangle|\leq\nu|h|_T|DT|_T.
\end{equation}
Observe that inequality
\begin{equation}\label{comparison norms}
|h|_T\leq\sqrt{2}\nu|h|
\end{equation}
holds for all tangent vector $h.$ To prove this, suppose that $h\neq 0,$ and define $\displaystyle{\delta:=\frac{h_{n+1}^2}{\sum_{i=1}^nh_i^2}}.$ By a straightforward computation,
\begin{equation}\label{relation norms alpha}
|h|_T^2=\left(\frac{1+\delta}{1-\delta}\right)|h|^2.
\end{equation}
Since $h$ is a tangent vector, $\langle h,N\rangle=\sum_{i=1}^nh_i\alpha_i-h_{n+1}\nu=0,$ and the Schwarz inequality readily gives that $\delta\leq\frac{\nu^2-1}{\nu^2};$ inequality (\ref{comparison norms}) then follows from (\ref{relation norms alpha}). Finally, inequalities (\ref{estimate N DT}) and (\ref{comparison norms}) imply that $|\langle Z,h\rangle|\leq\sqrt{2}\nu^2|h||DT|_T,$ and  the lemma.
\end{proof}
\noindent We need the evolution equation of $\nu:$
\begin{lemma}\label{lemma new evolution equation nu}
During the evolution, 
\begin{eqnarray}
\dot\nu-F^{ij}\nu_{ij}&=&\langle D_NT,N\rangle+F^{ij}\langle D^2_{e_i,e_j}T,N\rangle+2F^{ij}\langle D_{e_i}T,D_{e_j}N\rangle\nonumber\\
&&-\nu F^{ij}h_j^kh_{ki}.\label{new evolution equation nu}
\end{eqnarray}
Here $(e_i)$ is a basis of $T\Sigma_{t}$ at $X(x,t).$ 
\end{lemma}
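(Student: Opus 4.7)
The plan is to differentiate $\nu = -\langle T,N\rangle$ directly, once in time and twice in space, tracking the extra contributions that appear because, in contrast with the height function $-\langle e_{n+1}, X\rangle$ underlying the earlier identity \eqref{evolution nu time xn+1}, $T$ is now a non-parallel vector field on $\R^{n,1}.$

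For the time derivative, since $\dot X = (F-1)N$ in this section (where $\hat f\equiv 1$), and since $\dot N = \nabla(F-\hat f) = \nabla F$ gives the standard tangential variation of the future unit normal, one obtains
$$\dot\nu \;=\; -\langle D_{\dot X}T, N\rangle - \langle T,\dot N\rangle \;=\; -(F-1)\langle D_N T, N\rangle - \langle \nabla F, T\rangle.$$
For the Hessian, I would work in normal coordinates at the point and use Weingarten $D_{e_i}N = h_i^k e_k$ together with the Gauss formula $D_{e_i}e_j = h_{ij}N$ (the sign following from $\langle N,N\rangle = -1$) to obtain first $\nu_{;i} = -\langle D_{e_i}T, N\rangle - h_i^k\langle T, e_k\rangle,$ and then
\begin{eqnarray*}
\nu_{;ij}&=&-\langle D^2_{e_i,e_j}T, N\rangle - h_{ij}\langle D_N T, N\rangle - \langle D_{e_i}T, D_{e_j}N\rangle\\
&&-(\nabla_j h_i^k)\langle T, e_k\rangle - h_i^k\langle D_{e_j}T, e_k\rangle + \nu\, h_i^k h_{jk}.
\end{eqnarray*}

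Contracting with $F^{ij}$ and combining with $\dot\nu,$ three structural simplifications produce exactly the right-hand side of \eqref{new evolution equation nu}: (i) Codazzi in the flat ambient space gives $F^{ij}\nabla_j h_i^k = (\nabla F)^k,$ so $F^{ij}(\nabla_j h_i^k)\langle T, e_k\rangle = \langle \nabla F, T\rangle$ and cancels the analogous term in $\dot\nu;$ (ii) Weingarten and the symmetry of $F^{ij}$ rewrite $F^{ij}h_i^k\langle D_{e_j}T, e_k\rangle$ as $F^{ij}\langle D_{e_i}T, D_{e_j}N\rangle,$ which combined with the explicit copy already present produces the coefficient $2$ required in \eqref{new evolution equation nu}; (iii) Euler's identity for the $1$-homogeneous function $F = H_2^{1/2}$ yields $F^{ij}h_{ij} = F,$ so the $\langle D_N T, N\rangle$ coefficients combine as $-(F-1) + F = 1,$ precisely the coefficient appearing in \eqref{new evolution equation nu}.

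The computation is otherwise routine, in the spirit of the evolution equations recalled in Section \ref{section evolution equations}. The main obstacle is keeping the sign conventions consistent (owing to the timelike normal and the corresponding sign in the Lorentzian Gauss formula) and properly symmetrizing the mixed term $F^{ij}h_i^k\langle D_{e_j}T, e_k\rangle$ so as to absorb it into $2F^{ij}\langle D_{e_i}T, D_{e_j}N\rangle.$ Once the three identities above are recognized, no further analysis is needed.
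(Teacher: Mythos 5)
Your computation is correct and follows essentially the same route as the paper: differentiate $\nu=-\langle T,N\rangle$ using $\dot N=\nabla F$, expand the Hessian via the Gauss--Weingarten relations (with the Lorentzian sign $D_{e_i}e_j=h_{ij}N$, $D_{e_i}N=h_i^ke_k$), cancel the $\langle\nabla F,T\rangle$ terms by the Codazzi symmetry of $h_{ij;k}$, and use $F^{ij}h_{ij}=F$ to get the coefficient of $\langle D_NT,N\rangle$. The only cosmetic difference is that you apply Weingarten already in the first derivative of $\nu$ (producing the $(\nabla_j h_i^k)\langle T,e_k\rangle$ term directly), whereas the paper differentiates $\langle T,D_{e_i}N\rangle$ as a whole; the resulting identities coincide.
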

\begin{proof}
By Lemma 3.3 in \cite{Ge1}, the normal evolves according to $\dot N=\nabla F$ (recall that $\hat f\equiv 1$ here) and thus
 $$\langle T,\dot N\rangle=g^{ij}F_{;i}t_j=g^{ij}F^{kl}h_{kl;i}t_j.$$ 
Thus 
\begin{eqnarray}
\dot \nu&=&-\langle \dot T,N\rangle-\langle T,\dot N\rangle\nonumber
\\&=&-(F-\hat f)\langle D_NT,N\rangle-g^{ij}F^{kl}h_{kl;i}t_j.\label{dot nu}
\end{eqnarray}
We fix $(x,t)\in \Sigma_0\times[0,+\infty).$ Let $(e_i)$ be a local frame of $\Sigma_{t}$ such that $\nabla_{e_i}e_j(X(x,t))=0.$ Thus $D_{e_i}e_j(X(x,t))=h_{ij}N,$ and we have 
$$D_{e_j}(D_{e_i}T)=D^2_{e_i,e_j}T+h_{ij}D_NT$$
and
$$D_{e_j}(D_{e_i}N)=D_{e_j}(h_i^ke_k)=h^k_{i;j}e_k+h_i^kh_{kj}N.$$
Thus 
\begin{eqnarray}
F^{ij}\nu_{ij}&=&-F^{ij}\langle D^2_{e_i,e_j}T,N\rangle-F\langle D_NT,N\rangle-2F^{ij}\langle D_{e_i}T,D_{e_j}N\rangle\nonumber
\\&&-F^{ij}h^k_{i;j}\langle T,e_k\rangle+F^{ij}h_i^kh_{kj}\nu.\label{nu ij}
\end{eqnarray}
Equations (\ref{dot nu}) and (\ref{nu ij}), together with the symmetry of $h_{ij;k}$ in the three indices (the Codazzi equations), imply (\ref{new evolution equation nu}).
\end{proof}
Thus, the maximum condition $\dt (\log\varphi)-F^{ij}(\log\varphi)_{ij}\geq 0$ reads
\begin{equation}\label{maximum condition C1 interior estimate}
\begin{array}{l}
\left(F^{ij}h_j^kh_{ki}-\frac{1}{\nu^2}F^{ij}\nu_i\nu_j\right)-\left(\dt\log\eta-F^{ij}\left(\log\eta\right)_{ij}\right)
\\\leq\frac{1}{\nu}\left(\langle D_NT,N\rangle+F^{ij}\langle D^2_{e_i,e_j}T,N\rangle+2F^{ij}\langle D_{e_i}T,D_{e_j}N\rangle\right).
\end{array}
\end{equation}
\noindent Moreover, since $\nu_i=-h_i^k t_{k}-Z_i$ (see (\ref{gradient nu function S})), we get:
\begin{eqnarray}
F^{ij}h_j^kh_{ki}-\frac{1}{\nu^2}F^{ij}\nu_i\nu_j&=&\left(F^{ij}h_j^kh_{ki}-\frac{1}{\nu^2}F^{ij}h_i^kt_kh_j^lt_l\right)\nonumber\\&&-\frac{1}{\nu^2}F^{ij}Z_iZ_j-\frac{2}{\nu^2}F^{ij}h_i^kt_kZ_j.
\end{eqnarray}
We now estimate the terms in the expressions above:
\begin{lemma} \label{estimates various quantities}
There exist constants $C_1,C_2,C_3$ such that
\begin{equation}\label{estimate1}
\left|\langle D_NT,N\rangle\right|\leq C_1\nu^2,
\end{equation}
\begin{equation}\label{estimate2}
\left|F^{ij}\langle D^2_{e_i,e_j}T,N\rangle+\frac{1}{\nu}F^{ij}Z_iZ_j\right|\leq C_2\sigma_1\nu^3,
\end{equation}
\begin{equation}\label{estimate3}
\mbox{ and }\left|\frac{2}{\nu}F^{ij}h_i^kt_kZ_j+2F^{ij}\langle D_{e_i}T,D_{e_j}N\rangle\right|\leq C_3\nu^2\sum_{i}\sigma_{1,i}|\lambda_i|.
\end{equation}
The constants $C_1,C_2,C_3$ only depend on bounds on $|DT|_T,$ $|D^2T|_T$ and $F.$ 
\end{lemma}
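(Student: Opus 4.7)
The plan is to work in a local orthonormal basis $(e_1,\ldots,e_n)$ of principal directions of $\Sigma_{t_0}$ at the point in question, in which $F^{ij}$ is diagonal with $F^{ii}=\sigma_{1,i}/(2F)$, and to extend this frame to a Lorentzian-orthonormal basis $(u_1,\ldots,u_n,u_{n+1}=T)$ of $\R^{n,1}$. Since $F$ is controlled from above and below on $D_{\tau_0}$ by hypothesis, one has $\sum_i F^{ii}\leq C\sigma_1$; this uniform control absorbs each trace $F^{ij}(\cdots)_{ij}$ appearing below into a factor $\sigma_1$ times the supremum of the remaining tensor.

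The central analytic tool is a Cauchy--Schwarz estimate relative to the Riemannian norm $|\cdot|_T$. A direct computation in the basis above shows that $|Y|_T^2$ equals the Euclidean sum of squared Lorentzian-basis coefficients of $Y$; writing $\langle D_XT,Y\rangle$ and $\langle D^2_{X,Y}T,V\rangle$ as bilinear (resp.\ trilinear) expressions in these coefficients and applying the standard Cauchy--Schwarz inequality yields the tensorial bounds $|\langle D_XT,Y\rangle|\leq|X|_T|Y|_T|DT|_T$ and $|\langle D^2_{X,Y}T,V\rangle|\leq|X|_T|Y|_T|V|_T|D^2T|_T$. Combined with $|N|_T^2=2\nu^2-1\leq 2\nu^2$ and (\ref{comparison norms}), which gives $|e_i|_T\leq\sqrt{2}\nu$ for the unit tangent vectors, these bounds immediately yield (\ref{estimate1}) from $|\langle D_NT,N\rangle|\leq|N|_T^2|DT|_T\leq 2\nu^2|DT|_T$, and, after contracting against the diagonal tensor $F^{ii}$ and using $\sum F^{ii}\leq C\sigma_1$, bound the first summand of (\ref{estimate2}) by $C\sigma_1\nu^3$. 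The second summand is then controlled via Lemma \ref{lemma norm Z}: in the principal frame $F^{ij}Z_iZ_j=\sum_iF^{ii}Z_i^2\leq\bigl(\sum_iF^{ii}\bigr)|Z|^2\leq C\sigma_1\nu^4$, so $\frac{1}{\nu}F^{ij}Z_iZ_j\leq C\sigma_1\nu^3$.

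For the third estimate, diagonalising yields $D_{e_j}N=\lambda_je_j$ and $h_i^kt_k=\lambda_it_i$, reducing the two terms to $2\sum_iF^{ii}\lambda_i\langle D_{e_i}T,e_i\rangle$ and $\frac{2}{\nu}\sum_iF^{ii}\lambda_it_iZ_i$. The Cauchy--Schwarz bound above gives $|\langle D_{e_i}T,e_i\rangle|\leq|e_i|_T^2|DT|_T\leq 2\nu^2|DT|_T$, while $|t_i|\leq|T^{||}|\leq\nu$ (from (\ref{norm gradient tau})) and $|Z_i|\leq C_0\nu^2$ (Lemma \ref{lemma norm Z}) control the second sum by $\frac{1}{\nu}\cdot\nu\cdot\nu^2\sum_iF^{ii}|\lambda_i|$. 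Each term is therefore bounded by a constant times $\nu^2\sum_iF^{ii}|\lambda_i|$; replacing $F^{ii}$ by $\sigma_{1,i}/(2F)$ gives (\ref{estimate3}).

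The main obstacle is the bookkeeping around the indefinite Lorentzian signature: one has to check that the bilinear and trilinear forms arising from $DT$ and $D^2T$ admit genuine Cauchy--Schwarz estimates against $|\cdot|_T$ (which works precisely because, when $T$ is taken as one of the basis vectors, $|\cdot|_T$ coincides with the Euclidean norm associated to the Lorentzian orthonormal basis $(u_1,\ldots,u_n,T)$), and to keep careful track of the power of $\nu$ contributed by each conversion between the intrinsic unit tangent vectors to $\Sigma_{t_0}$ and their ambient $|\cdot|_T$-lengths via (\ref{comparison norms}).
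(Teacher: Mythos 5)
Your argument is correct and follows essentially the same route as the paper: expand $N$ and the principal directions $e_i$ in a Lorentz-orthonormal frame with $u_{n+1}=T$, note that all coefficient vectors have Euclidean length at most $\sqrt{2}\,\nu$ (via (\ref{comparison norms}) and $\langle N,N\rangle=-1$), apply Cauchy--Schwarz against $|DT|_T$ and $|D^2T|_T$, and control the $Z$-terms with Lemma \ref{lemma norm Z} and $F^{ii}=\sigma_{1,i}/(2F)$ together with the bounds on $F$. The only cosmetic difference is that you package the frame computation as a tensorial Cauchy--Schwarz inequality for $|\cdot|_T$, whereas the paper writes out the coefficient sums explicitly; the estimates and constants are the same.
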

\begin{proof}
Let $(u_i)_{1\leq i\leq n+1}$ be an orthonormal basis of $\R^{n,1}$ such that $u_{n+1}=T.$ We suppose that $e_1,\ldots,e_n$ is an orthonormal basis of principal directions of $\Sigma_{t_0}$ at $x_0,$ and we define $(\alpha_{ij})_{1\leq i,j\leq n+1}$ such that 
$$e_i=\sum_{j=1}^{n+1}\alpha_{ij}u_j\mbox{ and } N=\sum_{j=1}^{n+1}\alpha_{n+1,j}u_j.$$
The following estimates hold: for all $i=1,\ldots,n+1,$ $\displaystyle{\sum_{j=1}^{n+1}\alpha_{ij}^2\leq 2\nu^2}$ (see (\ref{comparison norms}) for $i<n+1,$ and use $\langle N,N\rangle=-1$ for $i=n+1).$ 

We first prove (\ref{estimate1}): using the Schwarz inequality, 
\begin{eqnarray*}
\left|\langle N,D_NT\rangle\right|&\leq&\sum_{i,j=1}^{n+1}|\alpha_{n+1,i}||\alpha_{n+1,j}||\langle u_i,D_{u_j}T\rangle|\leq 2\nu^2|DT|_{T},
\end{eqnarray*}
and we get (\ref{estimate1}). 

We estimate the first term in (\ref{estimate2}):
\begin{eqnarray*}
\left|\sum_{ij}F^{ij}\langle N,D^2_{e_i,e_j}T\rangle\right|&\leq&\frac{1}{2F}\sum_i\sigma_{1,i}\sum_{jkl}|\alpha_{n+1,j}||\alpha_{ik}||\alpha_{il}|\left|\langle u_j,D^2_{u_k,u_l}T\rangle\right|\\
&\leq&\frac{1}{2F}(n-1)2^{3/2}\sigma_1\nu^3|D^2T|_T.
\end{eqnarray*}
For the second term in (\ref{estimate2}), we readily get: $\displaystyle{F^{ij}Z_iZ_j\leq\frac{n-1}{2F}\sigma_1|Z|^2,}$ and we obtain the estimate from (\ref{norm Z}). 

We finally prove (\ref{estimate3}):
$$\left|\frac{2}{\nu}F^{ij}h_i^kt_kZ_j+2F^{ij}\langle D_{e_i}T,D_{e_j}N\rangle\right|\leq \frac{1}{F}\sum_i\sigma_{1,i}|\lambda_i|\left(\frac{1}{\nu}|t_i||Z_i|+|\langle D_{e_i}T,e_i\rangle |\right);$$
using $|T^{||}|\leq\nu,$ $|Z|\leq C_0\nu^2$ (Lemma \ref{lemma norm Z}) and $|\langle D_{e_i}T,e_i\rangle |\leq 2\nu^2|DT|_T$ we obtain (\ref{estimate3}).
\end{proof}
We now prove that the smallest principal curvature of $\Sigma_{t_0}$ at $x_0$ is negative, with absolute value arbitrarily large, if the constant $K$ is chosen sufficiently large:
\begin{lemma}\label{lower bound lambda_n}
If 
\begin{equation}\label{inequality K}
K\geq 2C_0\alpha(\tau_1-\tau_0)\frac{\nu_0^2}{\nu_0^2-1},
\end{equation}
the smallest principal curvature $\lambda_n$ is negative and is estimated by
\begin{equation}\label{estimate lambda n}
\lambda_n\leq -\frac{1}{2}\frac{\alpha^{-1}K}{\tau-\tau_0}\nu.
\end{equation}
\end{lemma}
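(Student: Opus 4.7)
The plan is to exploit the first-order condition $\nabla \log\varphi = 0$ at the interior maximum $(x_0,t_0)$ and combine it with the geometric identity (\ref{gradient nu function S}) to turn $\nabla\tau$ into an approximate eigenvector of the Weingarten map $S$. Since $\eta = (\tau - \tau_0)^K$, we have
\begin{equation*}
\frac{\nabla\eta}{\eta} = \frac{K\,\nabla\tau}{\tau-\tau_0},
\end{equation*}
and the vanishing of $\nabla\log\varphi$ at $(x_0,t_0)$ yields $\nabla\nu = -\frac{K\nu}{\tau-\tau_0}\nabla\tau$. Inserting this into (\ref{gradient nu function S}) gives
\begin{equation*}
\alpha\, S(\nabla\tau) \;=\; -\frac{K\nu}{\tau-\tau_0}\,\nabla\tau \,+\, Z.
\end{equation*}
Note that $\nabla\tau\neq 0$ at $(x_0,t_0)$ since $\nu(x_0,t_0)>\nu_0>1$ together with (\ref{norm gradient tau}) forces $\alpha^2|\nabla\tau|^2=\nu^2-1>0$.

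Next I would take the inner product of the displayed identity with $\nabla\tau$. Since $S$ is self-adjoint with respect to the induced metric and $\lambda_n$ is its smallest eigenvalue, $\langle S(\nabla\tau),\nabla\tau\rangle\geq\lambda_n|\nabla\tau|^2$, hence
\begin{equation*}
\alpha\,\lambda_n\,|\nabla\tau|^2 \;\leq\; -\frac{K\nu}{\tau-\tau_0}|\nabla\tau|^2 + \langle Z,\nabla\tau\rangle.
\end{equation*}
Dividing by $|\nabla\tau|^2$ and applying Cauchy--Schwarz together with the bound $|Z|\leq C_0\nu^2$ from Lemma \ref{lemma norm Z} and the identity $|\nabla\tau| = \sqrt{\nu^2-1}/\alpha$ from (\ref{norm gradient tau}), one obtains
\begin{equation*}
\lambda_n \;\leq\; -\frac{K\,\alpha^{-1}\nu}{\tau-\tau_0} \,+\, \frac{C_0\,\nu^2}{\sqrt{\nu^2-1}}.
\end{equation*}

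Finally, I would absorb the error term into half of the principal term. This requires
\begin{equation*}
\frac{C_0\,\nu^2}{\sqrt{\nu^2-1}} \;\leq\; \frac{1}{2}\,\frac{K\,\alpha^{-1}}{\tau-\tau_0}\,\nu,
\quad\text{i.e.}\quad K \;\geq\; \frac{2C_0\,\alpha(\tau-\tau_0)\,\nu}{\sqrt{\nu^2-1}}.
\end{equation*}
Using $\tau-\tau_0\leq\tau_1-\tau_0$ and the fact that $\nu\mapsto\nu/\sqrt{\nu^2-1}$ is decreasing on $(1,+\infty)$, so that $\nu/\sqrt{\nu^2-1}\leq\nu_0/\sqrt{\nu_0^2-1}\leq \nu_0^2/(\nu_0^2-1)$ under the assumption $\nu\geq\nu_0>1$, the hypothesis (\ref{inequality K}) is sufficient, and (\ref{estimate lambda n}) follows.

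The only subtle point I expect is justifying that $\nabla\tau$, although generally not a principal direction of $S$, nonetheless forces $\lambda_n$ to absorb the large negative coefficient — this is handled cleanly by the variational characterization of the smallest eigenvalue; the $Z$-term is then straightforward to control because Lemma \ref{lemma norm Z} gives the quadratic-in-$\nu$ bound, which is precisely the right order to be dominated by the $\nu/(\tau-\tau_0)$ principal term thanks to the factor $1/\sqrt{\nu^2-1}$.
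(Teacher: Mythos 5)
Your proposal is correct and follows essentially the same route as the paper: the extremum condition $\nabla\log\varphi=0$ combined with (\ref{gradient nu function S}) gives the Rayleigh quotient of $S$ in the direction $\nabla\tau$, the $Z$-term is controlled via Lemma \ref{lemma norm Z} and (\ref{norm gradient tau}), and the variational characterization of the smallest eigenvalue then yields (\ref{estimate lambda n}) under (\ref{inequality K}). The only (immaterial) difference is that the paper bounds the error term by $C_0\nu^3/(\nu^2-1)$ and uses $\nu^2/(\nu^2-1)\leq\nu_0^2/(\nu_0^2-1)$, whereas you keep $C_0\nu^2/\sqrt{\nu^2-1}$ and use the monotonicity of $\nu/\sqrt{\nu^2-1}$; both reduce to the same smallness condition on $K$.
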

Note that since $\varphi$ vanishes if $\tau=\tau_0$ and is positive if $\tau>\tau_0,$ we have $\tau>\tau_0$ at $(x_0,t_0).$
\begin{proof}
By (\ref{gradient nu function S}),
$$S(\nabla\tau)=\frac{1}{\alpha}(\nabla\nu+Z);$$
moreover, the extremum condition $\nabla\varphi(x_0,t_0)=0$ reads
$$\frac{\nabla\nu}{\nu}=-\frac{K}{\tau-\tau_0}\nabla\tau.$$
Thus
\begin{equation}\label{expression S nabla tau} 
\left\langle S\left(\frac{\nabla\tau}{|\nabla\tau|}\right),\frac{\nabla\tau}{|\nabla\tau|}\right\rangle=-\nu\alpha^{-1} \frac{K}{\tau-\tau_0}+\frac{\alpha^{-1}}{|\nabla\tau|^2}\langle Z,\nabla\tau\rangle.
\end{equation}
By Lemma \ref{lemma norm Z} and expression (\ref{norm gradient tau}), we estimate
$$\frac{\alpha^{-1}}{|\nabla\tau|^2}\left|\langle Z,\nabla\tau\rangle\right|\leq\alpha^{-1}\frac{|Z|}{|\nabla\tau|}\leq C_0\frac{\nu^3}{\nu^2-1}.$$
Since $\nu>\nu_0>1,$ we have $\frac{\nu^2}{\nu^2-1}\leq\frac{\nu_0^2}{\nu_0^2-1},$ and thus
\begin{eqnarray*}
-\nu\alpha^{-1} \frac{K}{\tau-\tau_0}+ C_0\frac{\nu^3}{\nu^2-1}&\leq&\nu\left(-\alpha^{-1} \frac{K}{\tau-\tau_0}+C_0\frac{\nu_0^2}{\nu_0^2-1}\right)\\
&\leq&-\frac{1}{2}\alpha^{-1} \frac{K}{\tau-\tau_0}\nu
\end{eqnarray*}
if $\frac{1}{2}\alpha^{-1} \frac{K}{\tau-\tau_0}\geq C_0\frac{\nu_0^2}{\nu_0^2-1},$ which follows from the hypothesis (\ref{inequality K}). Thus expression (\ref{expression S nabla tau}) is bounded by $\displaystyle{-\frac{1}{2}\alpha^{-1} \frac{K}{\tau-\tau_0}\nu},$ and the result follows.
\end{proof}
As in Lemma 4.6. of \cite{Bay2}, we obtain the key inequality:
\begin{lemma}\label{key inequality epsilon}
If $\lambda_n\leq 0,$
$$F^{ij}h_j^kh_{ki}-\frac{1}{\nu^2}F^{ij}h_i^kt_kh_j^lt_l\geq\varepsilon\sigma_{1,n}\lambda_n^2,$$
where $\varepsilon$ is a positive constant which only depends on an upper bound of $F.$
\end{lemma}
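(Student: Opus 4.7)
The plan is to work at $(x_0,t_0)$ in an orthonormal basis of principal directions of $\Sigma_{t_0}$, so that $h^i_j=\lambda_i\,\delta^i_j$, the matrix $F^{ij}$ is diagonal with $F^{ii}=\sigma_{1,i}/(2F)$, and the left-hand side of the claimed inequality becomes
$$A:=\frac{1}{2F}\sum_{i=1}^{n}\sigma_{1,i}\,\lambda_i^{2}\left(1-\frac{t_i^{2}}{\nu^{2}}\right).$$
By (\ref{norm gradient tau}), $\sum_i t_i^{2}=|T^{||}|^{2}=\nu^{2}-1$, so every factor $(1-t_i^{2}/\nu^{2})$ lies in $(0,1]$. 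Admissibility forces $\sigma_{1,i}>0$ for every $i$, so every summand of $A$ is non-negative; moreover $\sigma_{1,n}=\sigma_1-\lambda_n\geq\sigma_1>0$ because $\lambda_n\leq 0$.

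I would then split on the size of $t_n$. If $t_n^{2}\leq\nu^{2}/2$, the $i=n$ term alone yields
$$A\geq F^{nn}\lambda_n^{2}\bigl(1-t_n^{2}/\nu^{2}\bigr)\geq\frac{1}{4F}\sigma_{1,n}\lambda_n^{2},$$
and the conclusion holds with $\varepsilon=1/(4\beta_{0})$, where $\beta_{0}$ is an upper bound of $F$. The interesting case is $t_n^{2}>\nu^{2}/2$: the factor $(1-t_n^{2}/\nu^{2})$ may then be as small as $1/\nu^{2}$ and the natural term is useless, but $\sum_{i<n}t_i^{2}<\nu^{2}/2$ forces $(1-t_i^{2}/\nu^{2})>1/2$ for every $i<n$, so
$$A\geq\frac{1}{4F}\sum_{i<n}\sigma_{1,i}\lambda_i^{2}.$$

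The key remaining step is to recover a bound by $\sigma_{1,n}\lambda_n^{2}$ from the truncated sum. I would apply weighted Cauchy--Schwarz (legitimate because $\sigma_{1,i}\geq 0$)
$$\sum_{i<n}\sigma_{1,i}\lambda_i^{2}\;\geq\;\frac{\bigl(\sum_{i<n}\sigma_{1,i}\lambda_i\bigr)^{2}}{\sum_{i<n}\sigma_{1,i}},$$
combined with the Newton-type identity
$$\sum_{i=1}^{n}\sigma_{1,i}\lambda_i=\sigma_1^{2}-\sum_{i}\lambda_i^{2}=2\sigma_{2}>0.$$
Isolating $i=n$ gives $\sum_{i<n}\sigma_{1,i}\lambda_i=2\sigma_{2}+\sigma_{1,n}|\lambda_n|\geq\sigma_{1,n}|\lambda_n|$, while $\sum_{i<n}\sigma_{1,i}\leq\sum_{i=1}^{n}\sigma_{1,i}=(n-1)\sigma_{1}$. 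Together with $\sigma_{1,n}\geq\sigma_{1}$, this yields
$$\sum_{i<n}\sigma_{1,i}\lambda_i^{2}\geq\frac{\sigma_{1,n}^{2}\lambda_n^{2}}{(n-1)\sigma_{1}}\geq\frac{\sigma_{1,n}\lambda_n^{2}}{n-1},$$
and the lemma holds in both cases with $\varepsilon=\bigl(4(n-1)\beta_{0}\bigr)^{-1}$, depending only on the upper bound of $F$.

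The main obstacle is the almost-degenerate configuration $t_n^{2}\approx\nu^{2}$, in which the principal direction carrying the most negative curvature is nearly parallel to $T^{||}$ and the $i=n$ coefficient in $A$ collapses; the Cauchy--Schwarz step combined with the identity $\sum_i\sigma_{1,i}\lambda_i=2\sigma_{2}$ and the sign assumption $\lambda_n\leq 0$ is precisely what transfers the required information onto the indices $i<n$.
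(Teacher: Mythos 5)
Your proof is correct. The paper itself gives no self-contained argument here --- it simply refers to Lemma 4.6 of \cite{Bay2} (with $\lambda_n$ in place of $\lambda_{i_0}$ and $t_i$ in place of $u_i$) --- whereas you supply a complete proof in the same spirit: diagonalize so the left-hand side becomes $\frac{1}{2F}\sum_i\sigma_{1,i}\lambda_i^2\left(1-\frac{t_i^2}{\nu^2}\right)$, use $\sum_i t_i^2=\nu^2-1$ from (\ref{norm gradient tau}) together with $\sigma_{1,i}>0$ (admissibility), and split on whether $t_n^2$ nearly exhausts $|T^{||}|^2$. Your treatment of the degenerate case via weighted Cauchy--Schwarz and the identity $\sum_i\sigma_{1,i}\lambda_i=2\sigma_2$, with the sign hypothesis $\lambda_n\leq 0$ giving $\sum_{i<n}\sigma_{1,i}\lambda_i\geq\sigma_{1,n}|\lambda_n|$, is clean and yields the explicit constant $\varepsilon=\bigl(4(n-1)\beta_0\bigr)^{-1}$ (the dependence on the dimension $n$ is harmless); this makes the argument self-contained where the paper relies on an external reference.
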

\begin{proof}
Just follow the lines of the proof of Lemma 4.6. in \cite{Bay2}, with $\lambda_n$ instead of $\lambda_{i_0}$ and $t_i$ instead of $u_i,$ using here identity (\ref{norm gradient tau}).
\end{proof}
Inequality (\ref{maximum condition C1 interior estimate}) and Lemmas \ref{estimates various quantities} and \ref{key inequality epsilon} thus imply that
\begin{equation}\label{inequality with sigma 1,i lambda i}
\varepsilon\sigma_{1,n}\lambda_n^2-\left(\dt\log\eta-F^{ij}\left(\log\eta\right)_{ij}\right)\leq C_1\nu+C_2\sigma_1\nu^2+C_3\nu\sum_i\sigma_{1,i}|\lambda_i|.
\end{equation}
The next lemma permits to balance the last term in (\ref{inequality with sigma 1,i lambda i}):
\begin{lemma} \label{estimate sigma 1,i lambda i}
If $\lambda_n\leq 0,$ then, for all $i,$
$$\sigma_{1,i}|\lambda_i|\leq\sigma_2+\frac{\beta}{\nu}\sigma_{1,n}\lambda_n^2,$$
where $\beta$ is a constant arbitrarily small, if $K=K(\alpha,\beta,\tau_0,\tau_1)$ is chosen sufficiently large.
\end{lemma}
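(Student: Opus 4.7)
The statement is an algebraic inequality in the $\Gamma_2$-cone that leverages the lower bound on $|\lambda_n|$ provided by Lemma \ref{lower bound lambda_n}. The key structural observations are (i) the monotonicity $\sigma_{1,i}\leq\sigma_{1,n}$, which follows immediately from $\lambda_i\geq\lambda_n$ together with $\sigma_{1,j}=\sigma_1-\lambda_j$, and (ii) the freedom to ensure $|\lambda_n|\geq M\nu/\beta$ for any prescribed constant $M$ (depending only on $n$), by enlarging $K$ in terms of $\alpha,\beta,\tau_0,\tau_1$.

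I would first dispatch the easy regime $|\lambda_i|\leq|\lambda_n|$, which in particular covers every index with $\lambda_i\leq 0$. Using monotonicity and the largeness of $|\lambda_n|$,
\[
\sigma_{1,i}|\lambda_i|\leq\sigma_{1,n}|\lambda_n|=\frac{\sigma_{1,n}\lambda_n^2}{|\lambda_n|}\leq\frac{\beta}{\nu}\sigma_{1,n}\lambda_n^2,
\]
provided $|\lambda_n|\geq\nu/\beta$, which is a consequence of Lemma \ref{lower bound lambda_n} for $K$ chosen suitably. In this regime the conclusion follows without invoking the $\sigma_2$ term at all.

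The delicate case is $\lambda_i>|\lambda_n|$, which forces $\lambda_i>0$. Here I would exploit the identity $\sigma_{1,i}\lambda_i=\sigma_2-\sigma_2^{(i)}$, where $\sigma_2^{(i)}$ denotes the second elementary symmetric function of the $n-1$ curvatures $\{\lambda_j\}_{j\neq i}$, reducing the estimate to $-\sigma_2^{(i)}\leq(\beta/\nu)\sigma_{1,n}\lambda_n^2$. Expanding $\sigma_2^{(i)}$ along $\lambda_n$,
\[
-\sigma_2^{(i)}=|\lambda_n|\sigma_{1,i,n}-\sigma_2^{(i,n)},
\]
where $\sigma_{1,i,n}=\sigma_{1,i}+|\lambda_n|>0$ by the $\Gamma_2$-condition, and $\sigma_{1,i,n}=\sigma_{1,n}-\lambda_i\leq\sigma_{1,n}$ since $\lambda_i>0$. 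Hence $|\lambda_n|\sigma_{1,i,n}\leq|\lambda_n|\sigma_{1,n}$ is absorbed as in the easy regime. The residual $-\sigma_2^{(i,n)}$ is controlled by iterating the same expansion: peel off the smallest remaining curvature $\lambda_m$, noting that $|\lambda_m|\leq|\lambda_n|$ whenever $\lambda_m<0$, until one is left with the $\sigma_2$-polynomial of a set of nonnegative curvatures (which is $\geq 0$ and contributes nothing to the upper bound). The finitely many error terms accumulated through the iteration are each of the form $|\lambda_n|\sigma_{1,n}$ with combinatorial coefficients depending only on $n$, and are all absorbed into $(\beta/\nu)\sigma_{1,n}\lambda_n^2$ by choosing $M$ (hence $K$) correspondingly large.

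The main technical step is the iterative expansion in the second case, together with the bookkeeping of signs. Since all accumulated constants depend only on the fixed dimension $n$, enlarging $K$ in terms of $\alpha,\beta,\tau_0,\tau_1$ (via Lemma \ref{lower bound lambda_n}) suffices to absorb them, yielding the stated inequality.
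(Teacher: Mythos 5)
Your proof is correct and takes essentially the same route as the paper: the same split according to whether $|\lambda_i|\leq|\lambda_n|$ or $\lambda_i>|\lambda_n|$, the same identity $\sigma_{1,i}\lambda_i=\sigma_2-\sigma_2^{(i)}$ in the second case, and the same absorption of all error terms through the lower bound on $|\lambda_n|$ from Lemma \ref{lower bound lambda_n} for $K$ large. The only cosmetic difference is that the paper bounds the leftover positive products $-\lambda_j\lambda_k$ term by term (via $\lambda_j\leq\sigma_{1,n}$ and $|\lambda_k|\leq|\lambda_n|$) instead of your iterative peeling, which requires the same bookkeeping and yields the same dimensional constant.
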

\begin{proof}
We first suppose that $\lambda_i\leq 0:$ we thus have $|\lambda_i|\leq|\lambda_n|,$ and, since $\sigma_{1,n}\geq\sigma_{1,i},$
$$\sigma_{1,i}|\lambda_i|\leq\sigma_{1,n}|\lambda_n|.$$ 
By estimate (\ref{estimate lambda n}) in Lemma \ref{lower bound lambda_n}, $\frac{\nu}{\beta}\leq|\lambda_n|,$ if $K=K(\tau_0,\tau_1,\alpha,\nu_0,\beta)$ is sufficiently large. This implies
$$\sigma_{1,i}|\lambda_i|\leq\frac{\beta}{\nu}\sigma_{1,n}\lambda_n^2,$$
and the estimate. We now suppose that $\lambda_i\geq 0:$ the proof relies on the following inequality:
\begin{equation}\label{inequality lambda_i sigma_1,i}
\lambda_i\sigma_{1,i}\leq\sigma_2+\gamma_n\sigma_{1,n}|\lambda_n|,
\end{equation}
where $\gamma_n$ is a positive constant which only depends on $n.$ With this inequality at hand, taking as above $K$ sufficiently large such that $\frac{\gamma_n\nu}{\beta}\leq|\lambda_n|,$ we obtain the lemma. To finish the proof we thus focus on the proof of (\ref{inequality lambda_i sigma_1,i}):
first, 
$$\lambda_i\sigma_{1,i}=\sigma_2-\sigma_{2,in}-\sigma_{1,ni}\lambda_n,$$
where $-\sigma_{2,in}$ is a sum of terms of the form $-\lambda_j\lambda_k,$ with $j\neq k,$ and $j,k\notin\{i,n\}.$ We observe that $-\sigma_{1,ni}\lambda_n=\sigma_{1,ni}|\lambda_n|\leq\sigma_{1,n}|\lambda_n|,$ since $\lambda_i\geq 0.$ It thus remains to bound the positive terms appearing in $-\sigma_{2,in}:$ let $j,k\notin\{i,n\}$ be such that $-\lambda_j\lambda_k>0.$ We suppose that $\lambda_k<0,$ and thus that $\lambda_j>0.$ We have $\lambda_j\leq\sigma_{1,n}$ since $\sigma_{1,n}=\lambda_j+\sigma_{1,j}-\lambda_n$ with $\sigma_{1,j}$ and $-\lambda_n$ positive. Moreover $|\lambda_k|\leq|\lambda_n|$ since $\lambda_n\leq\lambda_k\leq 0.$ Thus $-\lambda_j\lambda_k\leq\sigma_{1,n}|\lambda_n|,$ which concludes the proof of (\ref{inequality lambda_i sigma_1,i}). 
\end{proof}
From (\ref{inequality with sigma 1,i lambda i}) and Lemma \ref{estimate sigma 1,i lambda i} we obtain the inequality
\begin{equation}
\varepsilon\sigma_{1,n}\lambda_n^2-\left(\dt\log\eta-F^{ij}\left(\log\eta\right)_{ij}\right)\leq C_1\nu+C_2\sigma_1\nu^2+nC_3\nu\left(\sigma_2+\frac{\beta}{\nu}\sigma_{1,n}\lambda_n^2\right).
\end{equation}
Choosing $\beta$ small such that $\varepsilon-nC_3\beta\geq\frac{\varepsilon}{2},$ we obtain 
\begin{equation}
\frac{\varepsilon}{2}\sigma_{1,n}\lambda_n^2-\left(\dt\log\eta-F^{ij}\left(\log\eta\right)_{ij}\right)\leq C_1\nu+C_2\sigma_1\nu^2+nC_3\nu\sigma_2.
\end{equation}
We now estimate the contribution of the cut-off function:
\begin{lemma} There exists a constant $C_4$ such that
$$\dt\left(\log\eta\right)-F^{ij}\left(\log\eta\right)_{ij}\leq C_4K\nu^2\frac{\sigma_1}{\tau-\tau_0}\left(1+\frac{1}{\tau-\tau_0}\right).$$
The constant $C_4$ depends on bounds on $\alpha^{-1},|D\alpha|_T,|DT|_T$ and $F.$
\end{lemma}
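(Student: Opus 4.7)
The plan is a direct computation from the explicit form $\log\eta=K\log(\tau-\tau_0),$ combined with the standard identities relating the intrinsic derivatives of $\tau_{|\Sigma_t}$ to the ambient quantities $T,\alpha,DT,D\alpha.$

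First I would differentiate: since $\log\eta=K\log(\tau-\tau_0),$
\begin{equation*}
\dt\log\eta-F^{ij}(\log\eta)_{ij}=\frac{K}{\tau-\tau_0}\bigl(\dot\tau-F^{ij}\tau_{ij}\bigr)+\frac{K}{(\tau-\tau_0)^2}F^{ij}\tau_i\tau_j,
\end{equation*}
so the lemma reduces to bounding the three quantities $\dot\tau,\ F^{ij}\tau_{ij},\ F^{ij}\tau_i\tau_j$ by expressions of the form $C\,\sigma_1\nu^2$ (the first two) and $C\,\sigma_1\nu^2$ (the last), up to constants depending only on $\alpha^{-1},|D\alpha|_T,|DT|_T$ and an upper bound of $F.$

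Next I would evaluate each term. Using $T=-\alpha D\tau,$ so $D\tau=-\alpha^{-1}T$ and $\langle D\tau,N\rangle=\alpha^{-1}\nu,$ the equation of motion $\dot X=(F-1)N$ (recall $\hat f\equiv 1$) gives
\begin{equation*}
\dot\tau=\langle D\tau,\dot X\rangle=(F-1)\,\alpha^{-1}\nu,
\end{equation*}
which is bounded by $C\nu$ under the standing bounds. For the intrinsic Hessian I would use the Gauss formula
\[
\tau_{ij}=D^2\tau(e_i,e_j)+h_{ij}\langle D\tau,N\rangle,
\]
(up to the sign convention for the second fundamental form). The second piece contracted with $F^{ij}$ gives $\pm\alpha^{-1}\nu F^{ij}h_{ij}=\pm\alpha^{-1}\nu F,$ which is bounded. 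For the first piece, differentiating $D\tau=-\alpha^{-1}T$ once more,
\[
D^2\tau=\alpha^{-2}D\alpha\otimes T-\alpha^{-1}DT,
\]
and the estimate $|e_i|_T\leq\sqrt 2\,\nu|e_i|$ from (\ref{comparison norms}) together with the definition of the $T$-norm shows that every component of $D^2\tau$ evaluated on a pair of unit tangent vectors is bounded by $C\nu^2(|D\alpha|_T+|DT|_T).$ Contracting against $F^{ij},$ whose trace satisfies $\sum_iF^{ii}=\frac{n-1}{2F}\sigma_1,$ yields
\[
\bigl|F^{ij}D^2\tau(e_i,e_j)\bigr|\leq C\,\sigma_1\nu^2.
\]
Combining the two contributions gives $|F^{ij}\tau_{ij}|\leq C\sigma_1\nu^2,$ which dominates the $\dot\tau$ term.

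For the last term, $\tau_i=(\nabla\tau)_i$ is the tangential gradient, and from (\ref{norm gradient tau}) we have $|\nabla\tau|^2=\alpha^{-2}(\nu^2-1)\leq\alpha^{-2}\nu^2.$ The eigenvalues of $F^{ij}$ are the $F^{ii}=\sigma_{1,i}/(2F),$ so
\[
F^{ij}\tau_i\tau_j\leq\Bigl(\max_iF^{ii}\Bigr)|\nabla\tau|^2\leq C\,\sigma_1\nu^2.
\]
Assembling the three estimates inside the explicit formula for $\dt\log\eta-F^{ij}(\log\eta)_{ij}$ yields
\[
\dt\log\eta-F^{ij}(\log\eta)_{ij}\leq C_4K\nu^2\,\frac{\sigma_1}{\tau-\tau_0}\left(1+\frac{1}{\tau-\tau_0}\right),
\]
as claimed.

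The only subtle step is the controlled comparison between the ambient $T$-norm and the induced Riemannian norm on $\Sigma_t,$ which accounts for the factor $\nu^2$ throughout; once that is handled via (\ref{comparison norms}), the remaining manipulations are purely algebraic and use only $\sum_iF^{ii}\lesssim\sigma_1/F$ and the uniform bounds assumed on $\alpha,\alpha^{-1},|D\alpha|_T,|DT|_T$ and $F.$
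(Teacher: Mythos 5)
Your proof is correct and follows essentially the same route as the paper: the paper likewise expands $\dt\log\eta-F^{ij}(\log\eta)_{ij}$ into the $\dot\tau$, $\sigma_{1,i}\tau_{ii}$ and $\sigma_{1,i}\tau_i^2$ contributions and estimates them term by term, writing $\tau_i=-\langle\alpha^{-1}T,e_i\rangle$ and using exactly the bounds $|\langle T,e_i\rangle|\leq\nu$, $|\langle D_{e_i}T,e_i\rangle|\leq 2\nu^2|DT|_T$ and $|(\alpha^{-1})_i|\leq\sqrt2\,\nu|D\alpha^{-1}|_T$ that you invoke through (\ref{comparison norms}). The one point worth stating explicitly is that absorbing the $O(\nu)$ contributions (from $\dot\tau$ and from the $h_{ij}\langle D\tau,N\rangle$ part of the Hessian, which contracts to $\alpha^{-1}\nu F$) into $C\sigma_1\nu^2$ uses that $\sigma_1$ is bounded below by a positive constant, via the Mac-Laurin inequality and the positive lower bound on $F$ — a remark the paper makes and which also explains why the constant depends on a lower, not just an upper, bound of $F$.
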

\begin{proof}
By a direct computation, 
$$\dt(\log\eta)=(F-\hat f)\frac{K}{\tau-\tau_0}D\tau(N).$$
Since $|D\tau(N)|=\nu\alpha^{-1},$ we get
\begin{equation}\label{estimate dt log eta}
\dt(\log\eta)\leq C\frac{K}{\tau-\tau_0}\nu,
\end{equation}
where $C$ depends on  bounds on $\alpha^{-1}$ and $F-\hat f.$ Moreover
\begin{equation}\label{log eta ii}
\sum_i\sigma_{1,i}(\log\eta)_{ii}=K\left(\sum_i\sigma_{1,i}\frac{\tau_{ii}}{\tau-\tau_0}-\sum_i\sigma_{1,i}\frac{\tau_i^2}{(\tau-\tau_0)^2}\right).
\end{equation}
Using (\ref{norm gradient tau}), the last term is directly estimated by 
\begin{equation}\label{estimate log eta ii 1}
 \sum_i\sigma_{1,i}\frac{\tau_i^2}{(\tau-\tau_0)^2}\leq(n-1)\alpha^{-2}\nu^2\frac{\sigma_1}{(\tau-\tau_0)^2}.
 \end{equation}
To estimate the first term in (\ref{log eta ii}), we extend  $e_1,\ldots,e_n$ to vector fields of $\Sigma_{t_0}$ such that $\nabla_{e_i}e_i(x_0)=0.$ Since $\tau_i=-\langle\alpha^{-1}T,e_i\rangle$ and $D_{e_i}e_i=\lambda_iN,$ we get 
$$\tau_{ii}=-\langle D_{e_i}(\alpha^{-1}T),e_i\rangle+\alpha^{-1}\nu\lambda_i.$$
Since $\langle D_{e_i}(\alpha^{-1}T),e_i\rangle=(\alpha^{-1})_i\langle T,e_i\rangle+\alpha^{-1}\langle D_{e_i}T,e_i\rangle,$ and since
$$|\langle T,e_i\rangle|\leq \nu,\ |\langle D_{e_i}T,e_i\rangle|\leq 2\nu^2|DT|_T, \mbox{ and }|(\alpha^{-1})_i|\leq\sqrt{2}\nu|D\alpha^{-1}|_T$$
(see the proof of Lemma \ref{estimates various quantities}), we obtain
\begin{equation*}\label{estimate log eta ii 2}
\left|\sum_i\sigma_{1,i}\frac{\tau_{ii}}{\tau-\tau_0}\right|\leq \frac{\alpha^{-1}}{\tau-\tau_0}\left(2(n-1)\sigma_1\nu^2\left\{\frac{1}{\alpha^{-1}}|D\alpha^{-1}|_T+|DT|_T\right\}+2\nu\sigma_2\right).
\end{equation*}
Observing moreover that $\sigma_1$ is bounded below by a positive constant (Mac-Laurin inequality), this last estimate together with (\ref{log eta ii}) and (\ref{estimate log eta ii 1}) imply that
\begin{equation}\label{estimate log eta ii 3}
-F^{ij}\left(\log\eta\right)_{ij}\leq CK\nu^2\frac{\sigma_1}{\tau-\tau_0}\left(1+\frac{1}{\tau-\tau_0}\right)
\end{equation}
where $C$ only depends on bounds on $\alpha^{-1},|D\alpha|_T,|DT|_T,$ and $F.$ Finally, estimates (\ref{estimate dt log eta}) and (\ref{estimate log eta ii 3}) give the result.
\end{proof}

Using the lower bound $\displaystyle{\frac{\varepsilon}{2}\sigma_{1,n}\lambda_n^2\geq\frac{\varepsilon}{2}\sigma_1\frac{\alpha^{-2}K^2}{4(\tau-\tau_0)^2}\nu^2}$ given by Lemma \ref{lower bound lambda_n}, we finally obtain
$$\frac{\varepsilon}{2}\sigma_1\frac{1}{4}K^2\nu^2-C_4K\nu^2\alpha^2\sigma_1((\tau-\tau_0)+1)\leq \left(C_1\nu+C_2\sigma_1\nu^2+nC_3\nu\sigma_2\right)\alpha^2(\tau-\tau_0)^2,$$
which is impossible if $K$ is sufficiently large under control.
\subsection{The construction of the adapted time function}
We suppose that $\ul u$ and $\ol u$ are the barriers constructed Section \ref{section barriers}. Let $R_0> 0,$ and consider $\Psi:\R^n\rightarrow\R$ such that $\Psi<\ul u$ on $\ol B_{R_0},$ and $\Psi\geq\ol u$ near infinity, given by Lemma \ref{construction psi}. We define
$$\tau(x_1,\ldots,x_n,x_{n+1}):=x_{n+1}-\Psi(x_1,\ldots,x_n).$$
The function $\tau$ is a time function since $\psi$ is spacelike. We fix $\tau_0>0$ such that $\Psi+2\tau_0\leq\ul u\mbox{ on }\ol B_{R_0}.$ We also fix $R_1$ such that $\psi(x)\geq\ol u(x)$ if $|x|\geq R_1,$ and, for $R\geq R_1+1,$ we consider $X^R:\Sigma_0^R\times[0,+\infty)\rightarrow\R^{n,1}$ solution of (\ref{modified parabolic Dirichlet problem X}). We set $K$ for the compact set $\{\psi(x_1,\ldots,x_n)\leq x_{n+1}\leq\ol u(x_1,\ldots,x_n)\}.$ Note that $\hat f\equiv 1$ on $K,$ and that the normal velocity $F$ is bounded from above and from below during the evolution, uniformly in $R.$  For all $t\in[0,+\infty),$ the set $\Sigma_{t,\tau\geq\tau_0}^R$ defined by (\ref{definition Sigma Tau0}) is compact, and is such that $\tau=\tau_0$ on its boundary. Moreover, since the set $D_{\tau_0}$ defined by (\ref{def Dtau0}) belongs to the compact set $K,$ the time function $\tau$ satisfies all the requirements  of the previous section. Thus Theorem \ref{theorem C1 interior estimate} applies and gives the gradient estimate on the set
$$\{(x,t)\in\Sigma_0^R\times[0,+\infty):\ \tau(X^R(x,t))\geq 2\tau_0\}.$$
We deduce the required local gradient estimate (\ref{a priori estimates entire parabolic}) since
$$\graph_{\ol B_{R_0}}u_R\subset\{X\in\R^{n,1}:\ \tau(X)\geq 2\tau_0\}.$$
\section{The local $C^2$ estimate}\label{section local C2 estimate}
We suppose that $\ul u,\ol u $ are the barriers constructed Section \ref{section barriers}.
\begin{theorem}
Let $R_0\geq 0,$ and let ${R_0}'\geq R_0$ and $\Phi:\ol B_{{R_0}'}\rightarrow\R$ be such that
$$\Phi>\ol u\mbox{ on }\ol B_{R_0},\ \Phi\leq\ul u\mbox{ on }\partial B_{{R_0}'},$$
given by Lemma \ref{construction phi} . We fix $\delta_0>0$ such that $\Phi\geq\ol u+\delta_0$ on $\ol B_{R_0},$ and we set,  for all $X=(x_1,\ldots,x_{n+1})\in\ol B_{{R_0}'}\times\R\subset\R^{n,1},$ 
$$\eta(X):=\Phi(x_1,\ldots,x_n)-x_{n+1}.$$ 
Then, there exists $R_1\geq {R_0}'$ such that, for all $R\geq R_1,$ the solution $X^R$ of (\ref{modified parabolic Dirichlet problem X}) satisfies the following local $C^2$ estimate:
$$\sup_{\{(x,t):\ \eta(X^R(x,t))\geq\delta_0\}}|II^R_{(x,t)}|\leq C.$$
Here $|II^R_{(x,t)}|$ stands for the norm of the second fundamental form of $\Sigma^R_{t}$ at $X^R(x,t),$ and $C$ is a constant controlled by the local $C^1$ estimate on the set where $\eta\geq 0.$
\end{theorem}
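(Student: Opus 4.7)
The idea is to carry out the interior $C^2$ estimate of Section~\ref{Dirichlet problem C2 max pple} with the auxiliary function $\eta(X)=\Phi(x_1,\dots,x_n)-x_{n+1}$. Since $\eta$ vanishes identically on the graph of $\Phi$, it plays the role of a ``free'' spatial cut-off for the test function, automatically providing zero boundary values and eliminating the need for any separate boundary $C^2$ estimate.

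\emph{Geometric reduction.} By Theorem~\ref{theorem parabolic Dirichlet problem modified}, $\ul u\le u^R\le\ol u$ throughout the evolution. Since $\Phi\le\ul u$ on $\partial B_{{R_0}'}$, the set
$$A^R:=\{(x,t)\in\ol B_R\times[0,+\infty):\ u^R(x,t)\le\Phi(x)\}$$
has spatial projection compactly contained in $B_{{R_0}'}$, with lateral boundary $\{u^R=\Phi\}$ in the open interior of $B_R$. Choose $R_1:={R_0}'+1$, so that for every $R\ge R_1$ one has $A^R\subset B_{R-1}\times[0,+\infty)$; by \eqref{definition hat f} the cut-off factor vanishes there and $\hat f\equiv 1$, i.e.\ $u^R$ solves the unperturbed scalar curvature flow on $A^R$.

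\emph{Maximum principle.} Mimicking Section~\ref{Dirichlet problem C2 max pple}, set in a local principal frame $W(x,t):=\eta^\beta(X^R(x,t))\,h_{11}(x,t)/g_{11}(x,t)$ for a constant $\beta>0$ to be fixed. Given $T_1>0$, consider the maximum of $W$ on the compact set $A^R\cap\{t\le T_1\}$. On $\{\eta=0\}$ we have $W\equiv 0$; at $t=0$, $W\le\sup|\eta|^\beta\cdot|\ff_{u_0}|$ is under control. We may thus assume the maximum is attained at an interior point $(x_0,t_0)$ with $\eta>0$ and $\lambda_1>0$, and reproduce the computation of Section~\ref{Dirichlet problem C2 max pple} up to inequality~\eqref{inequality fundamental C2 interior estimate reduced}. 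The sole modification concerns $F^{ij}(\log\eta)_{ij}$. The Gauss formula gives $\eta_{ij}=D^2\Phi(e_i,e_j)+h_{ij}D_N\eta$ on $\Sigma_t^R$; strict convexity of $\Phi$ together with the local $C^1$ estimate (which bounds $\nu$, keeping the horizontal projections of any orthonormal tangent frame non-degenerate) yields
$$F^{ij}\eta_{ij}\ge C_0\,\tau-C(1+F),\qquad \tau=\sum_iF^{ii},$$
and hence
$$F^{ij}(\log\eta)_{ij}\ge\frac{C_0\,\tau-C(1+F)}{\eta}-\frac{F^{ij}\eta_i\eta_j}{\eta^2}.$$
The singular quadratic term is absorbed using the extremum condition $\nabla W(x_0,t_0)=0$ (which couples $\eta_i/\eta$ with $h_{11;i}/h_{11}$ up to controlled terms) together with the Cauchy-Schwarz trick of \cite{U}; following \cite{U} p.\,312--315 exactly as in Section~\ref{Dirichlet problem C2 max pple}, one deduces a bound $\eta^\beta\lambda_1(x_0,t_0)\le C$ independent of $T_1$ and $R$.

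\emph{Conclusion and main obstacle.} This gives $\lambda_1\le C\delta_0^{-\beta}$ on $\{\eta\ge\delta_0\}$; the lower bound on the smallest principal curvature follows from admissibility $H_2[u^R]\ge 1$ via the Mac-Laurin inequality~\eqref{mac laurin}, yielding the full bound on $|\ff^R|$. The main technical obstacle is ensuring that the positive contribution $C_0\tau/\eta$ genuinely dominates both the curvature-quadratic term $F^{ij}h_i^ah_{aj}\sim\lambda_1^2$ (absorbed by the standard Urbas use of the concavity term $\frac{1}{h_{11}}F^{kl,pq}h_{kl;1}h_{pq;1}$) and the linear term $C_4(1+\lambda_1)$ of \eqref{inequality fundamental C2 interior estimate reduced}; this requires choosing $\beta$ sufficiently large under control and a careful balancing of the singular extremum condition -- the only point of novelty compared to the interior estimate, where $\eta$ was bounded below and these singularities did not arise.
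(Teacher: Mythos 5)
Your proposal is correct and follows essentially the same route as the paper: the cut-off $\eta^{\beta}$ built from the strictly convex function $\Phi$, the maximum principle for $\eta^{\beta}h_{ij}\xi^i\xi^j$ reducing to inequality (\ref{inequality fundamental C2 interior estimate reduced}), the key lower bound $F^{ij}\Phi_{ij}\geq c_0\tau-c$ coming from strict convexity, and the final absorption following Urbas with $\beta$ large. The only cosmetic difference is that you compute $F^{ij}\eta_{ij}$ directly via the Gauss formula, whereas the paper derives the same terms from the evolution equation (\ref{evolution height u}) of the height function $x_{n+1}$.
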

\begin{proof}
We fix $R_1\geq {R_0}'$ such that for all $R\geq R_1$ the local $C^1$ estimate holds on the set where $\ul u\leq x_{n+1}\leq \Phi.$ For $T_1>0$ we define 
$$\Gamma_{T_1}=\{(x,t)\in\Sigma_0^R\times[0,T_1]:\ \eta(X^R(x,t))\geq 0\}.$$ 
Following J.Urbas \cite{U}, we suppose that the function
\begin{equation}
\tilde W(x,t,\xi):=\eta^{\beta}(X^R(x,t))h_{ij}\xi^i\xi^j,
\end{equation}
defined for all $(x,t)\in\Gamma_{T_1}$ and all unit $\xi\in T_{X^R(x,t)}\Sigma_t,$ reaches its maximum at a point $(x_0,t_0,\xi_0),$ with $t_0>0.$ Arguing as in Paragraph \ref{Dirichlet problem C2 max pple}, we also get here inequality (\ref{inequality fundamental C2 interior estimate reduced}) at $(x_0,t_0).$ Moreover, using the evolution equation (\ref{evolution height u}) of $u=x_{n+1},$ we obtain
\begin{equation}
\dt\log\eta-F^{ij}\left(\log\eta\right)_{ij}=\frac{\nu\hat f}{\eta}+\frac{1}{\eta}\left(\dot\Phi-F^{ij}\Phi_{ij}\right)+F^{ij}\frac{\eta_i\eta_j}{\eta^2}.
\end{equation} 
The following estimates hold:  $F^{ij}\Phi_{ij}\geq c_0\sum_iF^i_i-c,$ $\dot\Phi\leq c,$ and $\nu\hat f\leq c$ where $c_0$ and $c$ are controlled constants; for the first estimate we refer to \cite{U} p.313, and for the second estimate to (\ref{dot Phi}). Thus
\begin{equation}\label{estimate evolution eta}
-\beta\left(\dt\log\eta-F^{ij}\left(\log\eta\right)_{ij}\right)\geq\frac{\beta}{\eta}\left(c_0\sum_iF^i_i-c'\right)-\beta F^{ij}\frac{\eta_i\eta_j}{\eta^2}.
\end{equation}
Inequalities (\ref{inequality fundamental C2 interior estimate reduced})  and (\ref{estimate evolution eta}) give inequality (2.8) obtained by J.Urbas in \cite{U} p. 312 (where the first term in (2.8) is moreover estimated by (2.12) \cite{U} p. 313). We then follow the arguments in \cite{U}, and obtain an upper bound of $\tilde W(x_0,t_0,\xi_0),$ if $\beta$ is chosen sufficiently large (under control). The bound is independent of $T_1$ and $R.$ This gives an upper bound of the second fundamental form during the evolution, on the set where $\eta\geq\delta_0.$ 
\end{proof}
This estimate implies the local $C^2$ estimate (\ref{a priori estimates entire parabolic}) since
$$\graph_{\ol B_{R_0}} u_R\subset \{X\in\R^{n,1}:\ \eta(X)\geq\delta_0\},$$
and thus completes the proof of Theorem \ref{entire parabolic problem}.
\appendix
\section{}\label{comparison mean curvature section}
\begin{lemma}\label{comparison mean curvature}
Let $u$ be a spacelike and convex function defined on $\R^n,$ and let ${x'}_0\in\R^{n-2}.$ Setting
$$\tilde u(x_1,x_2):=u(x_1,x_2,{x'}_0),$$
we have, for all $(x_1,x_2)\in\R^2,$
$$H_1[\tilde u](x_1,x_2)\leq H_1[u](x_1,x_2,{x'}_0).$$
\end{lemma}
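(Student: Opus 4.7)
The idea is to view $\tilde M := \mathrm{graph}(\tilde u)$ as a $2$-dimensional spacelike slice of $M := \mathrm{graph}(u) \subset \R^{n,1}$, obtained by intersecting $M$ with the $(2{+}1)$-dimensional affine subspace $P := \{x_3 = {x'}_{0,1},\ldots, x_n = {x'}_{0,n-2}\}$, which is a copy of $\R^{2,1}$. At the point $p := (x_1, x_2, {x'}_0, u(x_1, x_2, {x'}_0)) \in M$, the tangent plane $T_p\tilde M$ is a $2$-dimensional subspace of $T_p M$, and because $P$ inherits its Minkowski metric from $\R^{n,1}$, the induced Riemannian metric on $T_p\tilde M$ is the same whether computed via $\tilde M \subset \R^{2,1}$ or via $M \subset \R^{n,1}$.

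The key step is to compare the two second fundamental forms on $T_p\tilde M$. In the coordinate basis $T_i := \partial_i + u_i \partial_{n+1}$ of $T_p M$, the formula recalled in the introduction immediately yields, for the lowered-index second fundamental form $\ff_M$ of $M$, the coefficients $h^M_{ij} = u_{ij}/w$ with $w := \sqrt{1-|Du|^2}$. Carrying out the analogous computation inside $\R^{2,1}$ gives $h^{\tilde M}_{ij} = u_{ij}/\tilde w$ for $i,j \in \{1,2\}$, where $\tilde w := \sqrt{1-u_1^2-u_2^2}$. Since $|D\tilde u|^2 \leq |Du|^2$ one has $\tilde w \geq w > 0$, and the two expressions agree on the common basis $T_1, T_2$ of $T_p\tilde M$ up to the scalar factor $\tilde w/w \geq 1$, giving the pointwise identity
$$\ff_M|_{T_p\tilde M} = \frac{\tilde w}{w}\, \ff_{\tilde M}.$$

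The conclusion is then immediate. Convexity of $u$ means $D^2u \geq 0$, so $h^M_{ij} = u_{ij}/w$ is positive semidefinite and $\ff_M(X,X) \geq 0$ for every $X \in T_p M$; the same reasoning applied to $\tilde u$ gives $H_1[\tilde u] \geq 0$. Picking an orthonormal basis $f_1, \ldots, f_n$ of $T_pM$ whose first two vectors form an orthonormal basis of $T_p\tilde M$ (possible since the two metrics agree on that subspace), one obtains
$$H_1[u] = \sum_{i=1}^n \ff_M(f_i,f_i) \geq \ff_M(f_1,f_1) + \ff_M(f_2,f_2) = \frac{\tilde w}{w}\bigl(\ff_{\tilde M}(f_1,f_1) + \ff_{\tilde M}(f_2,f_2)\bigr) = \frac{\tilde w}{w} H_1[\tilde u] \geq H_1[\tilde u],$$
discarding the nonnegative terms for $i \geq 3$ in the first inequality and using $\tilde w/w \geq 1$ together with $H_1[\tilde u] \geq 0$ in the final step. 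The only point requiring any real care is deriving the scalar relation between $\ff_M$ and $\ff_{\tilde M}$ on $T_p\tilde M$; this is essentially a bookkeeping calculation comparing the two future-unit normals in their respective ambient spaces and does not present a genuine obstacle.
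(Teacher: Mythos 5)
Your proof is correct and follows essentially the same route as the paper's: compare the two second fundamental forms on the slice via $\tilde u_{ij}=u_{ij}$ and $\sqrt{1-|D\tilde u|^2}\geq\sqrt{1-|Du|^2}$, then sum over an orthonormal basis of $T_pM$ adapted to $T_p\tilde M$ and discard the remaining nonnegative terms using convexity. The only cosmetic difference is that you record the comparison as the exact identity $\ff_M|_{T_p\tilde M}=\frac{\tilde w}{w}\,\ff_{\tilde M}$ rather than as a matrix inequality.
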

\begin{proof}
The second fundamental form of the graph of $\tilde u$ in the chart $(x_1,x_2)$ is
$$\tilde{II}=\frac{1}{\sqrt{1-|D\tilde u|^2}}D^2\tilde u.$$
Since $|Du|\geq |D\tilde u|$ and $D^2\tilde u=D^2u_{|_{\R^2\times\{0\}}},$ we obtain
$$\frac{1}{\sqrt{1-|D\tilde u|^2}}D^2\tilde u\leq \frac{1}{\sqrt{1-|Du|^2}}D^2u_{|_{\R^2\times\{0\}}}.$$
The right-hand side term is the second fundamental form $II$ of the graph of $u,$ in the chart $(x_1,\ldots,x_n),$ restricted to the plane $x_3=\cdots=x_n=0.$ Let us fix $(e_1,e_2,\ldots,e_n)$ a basis in the chart $(x_1,\ldots,x_n)$ which induces an orthonormal basis $(\hat e_1,\ldots,\hat e_n)$ of the tangent space of $\graph u$ at $(x_1,x_2,{x'}_0).$ We suppose moreover that $e_1,e_2$ belong to the plane $x_3=\cdots=x_n=0.$ Thus $\hat e_1,\hat e_2$ are tangent to $\graph\tilde u,$ and we get:
\begin{eqnarray*}
H_1[\tilde u](x_1,x_2)=\tilde{II}(\hat e_1)+\tilde{II}(\hat e_2)&\leq& II(\hat e_1)+II(\hat e_2)\\
&\leq&\sum_{i=1}^n II(\hat e_i)\leq H_1[u](x_1,x_2,{x'}_0),
\end{eqnarray*}
where the second inequality follows from the convexity of $u.$
\end{proof}

\end{document}